\numberwithin{equation}{section}
\newtheorem{theorem}{Theorem}[section]
\newtheorem{proposition}[theorem]{Proposition}
\newtheorem{lemma}[theorem]{Lemma}
\newtheorem{definition}[theorem]{Definition}
\theoremstyle{definition}
\newcommand{\inn}{{\quad\hbox{in } }}
\newcommand{\LLL}{{\mathcal L}  }
\newcommand{\nn}{ {\nabla}  }
\newcommand{\A}{\alpha }
\newcommand{\vp}{\varphi}
\newcommand{\B}{\beta }
\newcommand{\rH}{{\mathcal H}}
\newcommand{\rR}{{\mathcal R}}
\newcommand{\NN}{ {\mathcal N}}
\newcommand{\R} {\mathbb R}
\newcommand{\cuad}{{\sqcap\kern-.68em\sqcup}}
\newcommand{\ve}{\e}
\newcommand{\e}{\epsilon}
\newcommand{\be}{\begin{equation}}
\newcommand{\ee}{\end{equation}}
\newtheorem{remark}[theorem]{Remark}
\def\A{{\mathfrak A}}
\def\B{{\mathfrak B}}
\def\G{{\mathfrak G}}
\def\R{{\mathfrak R}}
\def\begeq{\begin{equation}}
\def\endeq{\end{equation}}
\def\p{\partial}
\def\R{\Bbb R}
\def\G{\Gamma}
\begin{document}

\title
[The vortex filaments of Ginzburg-Landau system]
{The helical vortex filaments of Ginzburg-Landau system in ${\mathbb R}^3$}

\author{Lipeng Duan}
\address{Lipeng Duan,
\newline\indent School of Mathematics and Information Science, Guangzhou University,
\newline\indent Guangzhou 510006, P. R. China.
}
\email{lpduan777@sina.com}

\author{Qi Gao}
\address{Qi Gao,
\newline\indent Department of Mathematics, School of Science, Wuhan University of Technology,
\newline\indent Wuhan 430070, P. R. China
}
\email{gaoq@whut.edu.cn}

\author{Jun Yang$^\S$}
\address{Jun Yang,
\newline\indent School of Mathematics and Information Science, Guangzhou University,
\newline\indent Guangzhou 510006, P. R. China.
}
\email{jyang2019@gzhu.edu.cn}

\begin{abstract}
\vspace{4mm}
We consider the following coupled  Ginzburg-Landau system in ${\mathbb R}^3$
\begin{align*}
\begin{cases}
-\epsilon^2 \Delta  w^+ +\Big[A_+\big(|w^+|^2-{t^+}^2\big)+B\big(|w^-|^2-{t^-}^2\big)\Big]w^+=0,
\\[3mm]
-\epsilon^2 \Delta  w^- +\Big[A_-\big(|w^-|^2-{t^-}^2\big)+B\big(|w^+|^2-{t^+}^2\big)\Big]w^-=0,
\end{cases}
\end{align*}
where $w=(w^+, w^-)\in \mathbb{C}^2$ and the constant coefficients  satisfy
$$
A_+, A_->0,\quad B^2<A_+A_-, \quad t^\pm >0, \quad {t^+}^2+{ t^-}^2=1.
$$
If $B<0$, then for every $\epsilon$ small enough, we construct a family of entire solutions $w_\epsilon (\tilde{z}, t)\in \mathbb{C}^2$ in the cylindrical coordinates $(\tilde{z}, t)\in \mathbb{R}^2 \times \mathbb{R}$ for this system via the approach introduced by J. D\'avila, M. del Pino, M. Medina and R. Rodiac in {\tt arXiv:1901.02807}. These solutions are $2\pi$-periodic in $t$ and have multiple interacting vortex helices.
The main results are the extensions of the phenomena of interacting helical vortex filaments for the classical (single) Ginzburg-Landau equation in $\mathbb{R}^3$ which has been studied  in {\tt arXiv:1901.02807}. Our results negatively answer the Gibbons conjecture  \cite{Gibbons conjecture} for the Allen-Cahn equation in Ginzburg-Landau system version, which is an extension of the question originally proposed by H. Brezis.

\vspace{2mm}
{\textbf{Keywords:}  Ginzburg-Landau model,    Helical Vortex Filaments,    Lyapunov-Schmidt reduction}
\vspace{2mm}

{\textbf{AMS Subject Classification:}  35A01,  35B25, 35Q56.}
\end{abstract}

\date{\today}

\thanks{ Corresponding author: Jun Yang, jyang2019@gzhu.edu.cn}

\maketitle

\section{introduction}
In this paper, we study the entire solutions of the following Ginzburg-Landau  system  in $\R^N$, for complex vector-valued functions $w=(w^+, w^-): \R^N\rightarrow \mathbb{C}^2$:
 \begin{align}\label{original}
 \begin{cases}
 -\epsilon^2 \Delta_{{\mathbb R}^N}  w^+ +\Big[A_+\big(|w^+|^2-{t^+}^2\big)+B\big(|w^-|^2-{t^-}^2\big)\Big]w^+=0,
 \\[3mm]
-\epsilon^2 \Delta_{{\mathbb R}^N}  w^- +\Big[A_-\big(|w^-|^2-{t^-}^2\big)+B\big(|w^+|^2-{t^+}^2\big)\Big]w^-=0,
\end{cases}
\end{align}
here $A_\pm >0$, $t^\pm$, $B$ and $\epsilon>0$ are parameters, the notation $\Delta_{{\mathbb R}^N}  $  is the Laplacian operator in $\R^N$. Throughout the paper we make
the following assumptions concerning the constants appearing in \eqref{original}:
\begin{align} \label{H1}
A_+, A_->0,\ B^2<A_+A_-,  \tag{H1}
\end{align}
\begin{align}
\ t^\pm>0,\ \text{and}~{t^+}^2+{ t^-}^2=1.
\end{align}
The corresponding energy functional to system (\ref{original}) is
 \begin{align*}
{\mathbf E}( w)=\frac{1}{2}\int_{\mathbb{R}^N}|\nabla w|^2\ +\
&\frac{1}{4\epsilon^2}\int_{\mathbb{R}^N}\Big[\,A_+(| w^+|^2-{t^+}^2)^2+A_-(| w^-|^2-{t^-}^2)^2
\\[1mm]
&\qquad\quad+2B(| w^+|^2-{t^+}^2)(| w^-|^2-{t^-}^2)\,\Big].
\end{align*}
The hypothesis \eqref{H1}  ensures that  the potential term in the energy is positive definite and attains its minimum  when $|w^\pm|=t^\pm$.    From the previous work about the classical Ginzburg-Landau equation in \cite{BrezisMelerRiviere}, we   would  look for solutions which satisfy
\begin{align*}
\int_{\mathbb{R}^N}\Big[\,A_+(|w^+|^2-{t^+}^2)^2+A_-(|w^-|^2-{t^-}^2)^2
+2B(|w^+|^2-{t^+}^2)(|w^-|^2-{t^-}^2)\,\Big] <+\infty.
\end{align*}
 Therefore  the function $w=(w^+, w^-)$ satisfies the superconducting boundary condition
\begin{align}  \label{asymptotic}
|w|^2= |w^+|^2+  |w^-|^2 \rightarrow {t^+}^2+{ t^-}^2=1,       \quad \text{when} \quad |x|\rightarrow +\infty.
 \end{align}
When $N=2,3$ in \eqref{original}, Ginzburg-Landau systems of this type have been introduced in physical models of $p$-wave superconductors \cite{AB} and
two-component Bose-Einstein condensates (BEC) \cite{AG, KMM}.
When the coupled term $B$ vanishes, system \eqref{original} is decoupled and is precisely the classical (single-component)  Ginzburg-Landau equation
\begin{align}  \label{classicalepsilon}
\epsilon^2 \Delta_{{\mathbb R}^N}  u + (1-|u|^2)u= 0,  \quad u\in \mathbb{C},
\end{align}
 which arises in the theory  of superconductivity  and superfluids \cite{GL} without external applied magnetic field. From the theory of  superconductivity,
   $|u|^2$   is proportional to the density of superconducting electrons, i.e,      $|u|^2 \approx 1$  corresponds to the superconducting state and
     $|u|^2 \approx 0$ corresponds to the  normal  state.
The zero set of $u$,   which the superconductivity is not present, is said to be vortex set or vorticity.
The  solutions  with vortex lines or vortex filaments to the system \eqref{original} are the main objects in this paper.

\medskip
\subsection{The case $B=0$}
For the classical Ginzburg-Landau equation in a smooth bounded domain $\Omega \subset \R^N$,
\begin{align}\label{eq:GLequationeps}
\begin{cases}
\epsilon^2\Delta u_\epsilon +(1-|u_\epsilon|^2)u_\epsilon=0  \quad \text{in}~ \Omega \subset \R^N,  \quad u_\epsilon \in \mathbb{C},
\\[3mm]  u_\epsilon =f \quad \text{on}\quad \p\Omega,
\end{cases}
\end{align}
the solutions with vortices have been widely studied  in the past  years.

\medskip
 For $N=2$, F. Bethuel, H. Brezis, and F. H\'elein \cite{BethuelBrezisHelein1994} considered the asymptotic behavior of $u_\epsilon$ with a boundary condition  $f: \p \Omega \rightarrow \mathbb{S}^1$ of degree $k\ge 1$. They showed that there exists a subsequence $\{\epsilon_j\}$  and exactly $k$ points  $b_1, \cdots b_k$ in $\Omega$ such that
\begin{align}\label{forma}
u_{\epsilon_j} (x) \rightarrow   {\bf u}=  e^{i\vp(x) }\prod_{l=1}^k   \frac{x-b_l}{|x-b_l|} \quad \text{in}~C_{loc} (\Omega\setminus\{b_1,  \cdots b_k\}),  \quad \text{as $\epsilon_j \rightarrow  0$},
\end{align}
where $\varphi$ is a harmonic function satisfying $\varphi|_{\p \Omega} = f. $
Furthermore, they also showed that the  $k$-tuple  $(b_1,  \cdots b_k)$ globally minimizes  the  {\em renormalized energy functional } $\mathcal W(b_1,  \cdots b_k)$ which is characterised by
\begin{align}
\mathcal W(b_1,  \cdots b_k)  = : \lim_{r\rightarrow 0}   \int_{\Omega\setminus \cup_{l=1}^k B_r(b_l)} \Big[ |\nabla {\bf u}|^2 - k\pi  |\ln r| \Big].
\end{align}
 And in \cite{BethuelBrezisHelein1994}, an implicit expression  of  $\mathcal W(b_1,  \cdots b_k)$ in terms of Green's  function  was given, see also \cite{delPinoFelmer1997, struwe} for similar results.
 One better approximation than ${\bf u}$ in \eqref{forma}  is
\be
 e^{i\vp(x) }\prod_{l=1}^k   \omega\Big(\frac{x-b_l}{\epsilon}\Big),
\ee
where $\omega$
 is the {\em standard degree $+1$ vortex solution} of the  following Ginzburg-Landau equation  in $\R^2$
\begin{equation} \Delta u + (1-|u|^2)u = 0,\quad u\in \mathbb{C}.
\label{stationary}
\end{equation}
In other words, equation \eqref{stationary} has a unique solution
of the form  \be\label{v1}
 \omega (\tilde z)= e^{i\theta} U(r), \quad \tilde z=re^{i\theta}, \quad U\in \R,\ee where $U>0$ is a solution of
\begin{equation*}
\left\{  \begin{aligned}
&U'' +  \frac {U'}r  - \frac U{r^2} +  (1-U^2) U  =  0 \inn (0,\infty),\\
& U(0^+) = 0 , \quad U(+\infty) = 1 ,
\end{aligned} \right.
\end{equation*} see \cite{ChenElliottQi1994, HerveHerve1994}.
On the other hand,  a natural question is to  find solutions  of \eqref{eq:GLequationeps}  with vortices at other critical points of the renormalized energy functional $\mathcal W(b_1,  \cdots b_k)$ (see \cite{almeida, delPinoKowalczykMusso2006, Lin19951, Lin19952, LinLin, PacardRiviere2000} and the references therein for researchs  on  this  problem).
  In \cite{PacardRiviere2000}, F. Pacard and T. Rivi\`ere     constructed  solutions of  \eqref{eq:GLequationeps}      with vortices  of combined  degree $\pm1$   and showed that  the vortex points  converge to non-degenerate critical point of corresponding   renormalized energy.
    In \cite{delPinoKowalczykMusso2006}, M.~del Pino, M.~Kowalczyk and M.~Musso considered  the Ginzburg-Landau equation  \eqref{eq:GLequationeps} with  added  zero Neumann  boundary condition  and constructed solutions  by using Lyapunov-Schmidt reduction without non-degenerate assumption on the critical point of renormalized energy.

\medskip
  In the high dimension $N\ge 3$,  the locations of vortices to the classical Ginzburg-Landau with suitable boundary conditions and energy levels do not occur at points, but  along a generalized  minimal sub-manifold structure with  co-dimension $2$, which is naturally interpreted as ``vortex sub-manifold".
   In $\R^3$,  the structure of vortices should typically be the form of  curves which are called vortex filaments.  In the work \cite{delPinoKowalczyk2008},   M.~del Pino and  M.~Kowalczyk studied \eqref{eq:GLequationeps} in a cylinder   $ \Omega  =B_R(0)\times (0,2\pi) $  in $\R^3$  and  formally   gave an  approximation  of  solution to \eqref{eq:GLequationeps} with  helical vortex filaments structure. Moreover, they conjectured in \cite{delPinoKowalczyk2008}  that \eqref{eq:GLequationeps}   has a solution $  u_{\epsilon {\bf g}}$ of the form
  \begin{align}  \label{conjectureapprox}
  u_{\epsilon {\bf g}} \approx  e^{i\vp(\tilde z, t) }\prod_{j=1}^k \omega\Big( \frac{\tilde z-g_j(t)} \epsilon \Big), \quad (\tilde z, t)\in \R^3,
  \end{align}
where  $\varphi$ is a harmonic function satisfying the boundary condition and $ {\bf g} =(g_1,\cdots,g_k)$ represents  $k$ curves : $ t\rightarrow \big(g_j(t), t\big),  j \leq k$. For simplicity, the authors assumed that the $k$ curves are $2\pi$-periodic in $t$.
The asymptotic expansion of  energy functional   $E_{0 \epsilon} (  u_{\epsilon {\bf g}} )  $ of \eqref{eq:GLequationeps}  is
  \be
I_\ve({\bf g} ):=  E_{0 \epsilon} (  u_{\epsilon {\bf g}} ) \approx    2\pi \times k\pi|\log\ve|  +   \mathcal I _\epsilon({\bf g}  ) , \nonumber
\ee
where
\be
E_{0 \epsilon}(u) = \frac 12  \int_\Omega |\nn u|^2    + \frac 1{4\ve^2} \int_\Omega (1-|u|^2)^2,\nonumber
\ee
and
\begin{equation*}
\mathcal I _\epsilon(\mathbf{ g}):=\pi \int_0^{2\pi}  \Big ( \, |\ln \epsilon| \frac12 \sum_{l=1}^k|g'_l(t)|^2-\sum_{j\neq l}\log |g_j(t)-g_l(t)| \, \Big )\,{\mathrm d}t.
\end{equation*}
Therefore,      the  asymptotic expressions
   for  equilibrium location     in \eqref{conjectureapprox} are
   \begin{align}\label{location of vortex line}
   g_j (t)  \approx \frac{1}{\sqrt{|\ln \epsilon| }} \sqrt{k-1} e^{i t}e^{2i(j-1)\pi/k}, \ \ j=1,\cdots,k,
   \end{align}
and  they   can be  obtained by solving the following   Euler-Lagrange equations of
    $\mathcal I _\epsilon(\mathbf{g})$
    \[ {\bf g}(t) =\frac{1}{\sqrt{|\ln \epsilon| }} \tilde{\bf g}(t), \quad  \tilde{\bf g} =(\tilde{g}_1,\cdots,\tilde{g}_k),  \quad   -\tilde{g}_l''(t)= 2\sum_{i\neq l}\frac{\tilde{g}_l(t)-\tilde{g}_i(t)}{|\tilde{g}_l(t)-\tilde{g}_i(t) |^2}.     \]
    Recently, J. D\'avila, M. del Pino, M. Medina and R. Rodiac  in \cite{DDMR}  have rigorously proved this conjecture in $\R^3$  by constructing  a family of entire solutions $u_\epsilon(\tilde z,t)$ as $\epsilon\rightarrow0$ with the form
      \begin{align*}
  u_{\epsilon} (\tilde z,t)\approx  \prod_{l=1}^k \omega\Big( \frac{\tilde z-g_j(t)} \epsilon \Big), \quad (\tilde z, t)\in \R^3.
  \end{align*}
Now we recall the main    results provided in \cite{DDMR}.

  \medskip
\begin{theorem}  (\cite{DDMR}).
For every integer  $k\geq 2$ and $\epsilon>0$ sufficiently small,  there exists a solution $u_\ve (\tilde z,t) \in \mathbb{C}$ of \eqref{classicalepsilon} with $N=3$, $2\pi$-periodic in the
$t$-variable,   with the following asymptotic profile:
\begin{equation*}
u_\epsilon(\tilde z,t)=\prod_{j=1}^k \omega \left(\frac{\tilde z-    {g}^\ve_j(t)}{\epsilon} \right)+  \varphi_\ve(\tilde z,t),
\end{equation*}
where ${g}_j^\ve(t)$ is $2\pi$-periodic with the asymptotic behavior \eqref{location of vortex line} and
\begin{equation*}
|\varphi_\epsilon(\tilde z,t)| \, \le\,\frac{C}{|\ln \epsilon|}.
\end{equation*}
\qed
\end{theorem}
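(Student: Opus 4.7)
The plan is to follow the Lyapunov-Schmidt scheme of the cited paper. First I would fix the ansatz: for $k$ candidate curves $\mathbf{g}=(g_1,\ldots,g_k)$ in $\mathbb{R}^2$, each $2\pi$-periodic in $t$, set
\begin{equation*}
U_{\mathbf{g}}(\tilde z,t) \;=\; \prod_{j=1}^k \omega\!\left(\frac{\tilde z - g_j(t)}{\epsilon}\right),
\end{equation*}
where $\omega$ is the standard degree-one vortex of \eqref{stationary}. The zeroth-order candidate is the rigidly rotating configuration
\begin{equation*}
g_j^0(t) \;=\; \frac{\sqrt{k-1}}{\sqrt{|\ln\epsilon|}}\, e^{it}\,e^{2\pi i(j-1)/k},
\end{equation*}
which solves the Euler-Lagrange equations of the renormalized functional $\mathcal{I}_\epsilon$ recalled in the excerpt. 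I would seek $g_j = g_j^0 + h_j$ with small periodic corrections $h_j$ and look for a genuine solution of the form $u_\epsilon = U_{\mathbf{g}} + \varphi$.

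Second, inserting the ansatz into \eqref{classicalepsilon} gives the perturbed problem
\begin{equation*}
L_\epsilon \varphi \;=\; -\,S(U_{\mathbf{g}}) \;-\; N(\varphi),
\end{equation*}
with $S(U_{\mathbf{g}})= \epsilon^2 \Delta U_{\mathbf g} + (1-|U_{\mathbf g}|^2)U_{\mathbf g}$, $L_\epsilon$ its linearization, and $N(\varphi)$ collecting the nonlinear terms. The leading part of $S(U_{\mathbf{g}})$ comes from the $t$-derivatives of the vortex profiles (the helix curvature) and from the mutual interaction of the $k$ vortices; in appropriately weighted $L^\infty$ norms adapted to the logarithmic decay of phase perturbations one finds $\|S(U_{\mathbf{g}})\|= O(|\ln\epsilon|^{-1})$. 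The near-kernel of $L_\epsilon$ is spanned by the two translation directions of each $\omega$-factor, giving $2k$ functions $Z_{j,m}(\tilde z,t)$ localized near the filaments. I would develop an invertibility theory for $L_\epsilon$ modulo these $Z_{j,m}$ by solving, for each right-hand side $f$, the projected problem
\begin{equation*}
L_\epsilon \varphi \;=\; f \;+\; \sum_{j,m} c_{jm}(t)\,Z_{j,m}, \qquad \langle \varphi, Z_{j,m}\rangle_{L^2(\mathbb{R}^2)} \;=\; 0 \ \text{ for every } t,
\end{equation*}
with Lagrange multipliers $c_{jm}(t)$, and prove a linear estimate of the form $\|\varphi\|_* \le C\|f\|_{**}$ uniformly in small $\epsilon$.

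Third, a contraction mapping in these weighted norms produces, for every $\mathbf{h}=(h_1,\ldots,h_k)$ in a small ball, a unique $\varphi=\varphi(\mathbf{h})$ solving the projected nonlinear equation. The reduced problem is then $c_{jm}(t;\mathbf{h})\equiv 0$. A direct computation, using that the $L^2$-pairings of $S(U_{\mathbf{g}})$ with $Z_{j,m}$ reproduce exactly the Euler-Lagrange derivatives of $\mathcal I_\epsilon$, shows that this reduced system is equivalent, up to smaller-order terms, to the linearization of
\begin{equation*}
-\tilde g_l''(t) \;=\; 2\sum_{i\neq l}\frac{\tilde g_l(t)-\tilde g_i(t)}{|\tilde g_l(t)-\tilde g_i(t)|^2}
\end{equation*}
around $\tilde g_l^0(t)=\sqrt{k-1}\,e^{it}e^{2\pi i(l-1)/k}$. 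Checking the non-degeneracy of this linearized operator on $2\pi$-periodic perturbations modulo the obvious symmetries (rigid rotation in $\tilde z$ and translation in $t$) and gauging these out, one inverts it and finds $\mathbf{h}$ by a final fixed-point argument, which fixes the curves ${g}^\epsilon_j(t)$ and produces the claimed pointwise bound $|\varphi_\epsilon|\le C/|\ln\epsilon|$.

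The main obstacle is the linear theory for $L_\epsilon$ on the unbounded cylinder $\mathbb{R}^2\times S^1_t$: since $|U_{\mathbf{g}}|\to 1$ at infinity, $L_\epsilon$ degenerates on the phase component and the natural a priori estimate must allow logarithmic growth in $|\tilde z|$. One has to decompose the perturbation as $\varphi = i U_{\mathbf{g}}\psi + \eta$ into phase and modulus contributions, estimate the phase $\psi$ in a weighted space tolerating $\log|\tilde z|$ growth while keeping the modulus $\eta$ bounded with suitable exponential or polynomial decay, and show that the coupling between the two closes. Producing the sharp $O(|\ln\epsilon|^{-1})$ bound needed to read off the asymptotic filament positions \eqref{location of vortex line} from the reduced system is the technical heart of the proof.
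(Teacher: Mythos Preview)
The paper does not prove this theorem; it is quoted from \cite{DDMR} and closed with a bare \qed. There is therefore no proof here to compare against directly. The paper does, however, prove the coupled analogue (Theorem~\ref{theorem1}) by exactly the method of \cite{DDMR}, and your outline departs from that method in one structural respect worth flagging.

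You work on the full cylinder $\mathbb{R}^2\times S^1_t$, allow general periodic curves $g_j(t)=g_j^0(t)+h_j(t)$, and plan to close the reduction by inverting the linearized vortex-filament ODE on periodic $h_j$ modulo symmetries. The actual argument instead imposes screw symmetry from the start: an ansatz $u_\epsilon(r,\theta,t)=e^{ikt}\tilde u(r,\theta-t)$ collapses the PDE to a two-dimensional equation in which the helices are forced to be rigid, $g_j(t)=\epsilon\tilde d\,e^{it}e^{2\pi i(j-1)/k}$, and only a single real radius parameter $\hat d$ remains to be determined. The reduced problem is then one scalar equation (see \eqref{reduced for d} in the coupled case), solved by continuity, and no non-degeneracy analysis of the $k$-body interaction ODE is needed. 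Your step ``checking the non-degeneracy of this linearized operator on $2\pi$-periodic perturbations modulo the obvious symmetries'' is therefore an extra obligation your route incurs that the screw-symmetric approach sidesteps entirely; nothing in this paper supplies that non-degeneracy, and it would require a separate argument.

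Finally, the sharp control needed to read off the leading balance in the reduction is obtained via a further odd/even Fourier decomposition of both the error and the perturbation about each vortex, with dedicated semi-norms $|\cdot|_{\sharp\sharp}$ and $|\cdot|_\sharp$ tracking the slowly decaying odd modes (Propositions~\ref{errorProp2} and~\ref{prop:sharp2b}). Your last paragraph correctly identifies this as the technical heart, but the mechanism you sketch (phase/modulus splitting with logarithmic phase growth) is coarser than what is actually used.
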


 It is worth pointing out that  the solutions $  u_{\epsilon} (\tilde z,t)$ constructed in \cite{DDMR} satisfy
 \begin{equation}\label{uniform}
  \lim_{|\tilde z| \rightarrow +\infty}  | u_{\epsilon} (\tilde z,t) |= 1 \quad \text{uniformly in}~t,
  \end{equation}
 and still depend  on $t$.  As we know that H. Brezis has postulated the following  Gibbons conjecture: {\em whether a solution  $u_\epsilon (\tilde z,t)\in\mathbb{C}$ of  classical Ginzburg-Landau  with the uniform convergence condition \eqref{uniform} must necessarily be a function in $\tilde z$.}
However, the results obtained in \cite{DDMR} actually negatively answered the conjecture proposed by H. Brezis when $N=3$. The reader can refer to \cite{Gibbons conjecture}, \cite{GIBBONS} for more information about  the Gibbons conjecture.

\subsection{The case $B\ne0$}

When $N=2$, with space rescaling, we can reduce the system \eqref{original} into the following one:
  \begin{align} \label{2dgl}
 \begin{cases}
 -  \Delta w^+ +\Big[A_+\big(|w^+|^2-{t^+}^2\big)+B\big(|w^-|^2-{t^-}^2\big)\Big]w^+=0,
 \\[3mm]
 -  \Delta w^- +\Big[A_-\big(|w^-|^2-{t^-}^2\big)+B\big(|w^+|^2-{t^+}^2\big)\Big]w^-=0,
\end{cases}
  \quad  \text{in}~\R^2.
\end{align}
The two-component model \eqref{2dgl} was studied by \cite{AlamaBronsardMironescu1} and \cite{AlamaBronsardMironescu2} in the ``balanced" case, $A_+=A_-$ and $t^+=t^-=1/\sqrt{2}$.
S. Alama, L. Bronsard and P. Mironescu in \cite{AlamaBronsardMironescu1} and \cite{AlamaBronsardMironescu2} proved the existence, uniqueness, monotonicity and stability of radial
solutions with symmetric vortices.
Furthermore, under the assumption \eqref{H1},  S. Alama and  Q. Gao obtained the  existence,  uniqueness, asymptotic behaviors and quantization results of symmetric equivariant vortex solution   with degree pair $(n^+, n^-)$  in the form
\begin{align*}
w_n^+(\ell, \theta)=W_n^+(\ell)e^{in^+\theta},
\qquad
w_n^-(\ell, \theta)=W_n^-(\ell)e^{in^-\theta},
\end{align*}
 to  the system \eqref{2dgl}, where $(\ell, \theta)$ are the polar coordinates (see \cite{AG}).

 In order to give our results in a  more precise way, we use the notation in the rest of this paper to denote the radially symmetric solution  with  vortices of degree pair $(n_+,n_-)=(+1,+1)$ as below:
\begin{align*}
w(\tilde z)=\big(w^+(\tilde z), w^-(\tilde z)\big)
\quad\mbox{where}\quad
w^\pm(\tilde z)=W^\pm(\ell) e^{i\theta}, \quad  ~\tilde z=\ell e^{i\theta}.
\end{align*}
Then we get the corresponding ODE system
\begin{align}\label{1.2}
 \begin{cases}
-{W^+}''-\frac{1}{\ell}{W^+}'+\frac{1}{\ell^2}{W^+}+\Big[\,A_+({W^+}^2-{t^+}^2)+B({W^-}^2-{t^-}^2)\,\Big]W^+=0,\vspace{3mm}
\\
-{W^-}''-\frac{1}{\ell}{W^-}'+\frac{1}{\ell^2}{W^-}+\Big[\,A_-({W^-}^2-{t^-}^2)+B({W^+}^2-{t^+}^2)\,\Big]W^-=0,
 \end{cases}
\end{align}
where $\ell\in [0, +\infty)$.
 The facts below about  the properties of $W=(W^+, W^-)$  are given  in \cite{AG}.
\begin{lemma} \label{lemmaofW}   (\cite{AG}).
Suppose $W=(W^+, W^-)$ is a solution of the  ODE system \eqref{1.2}.Then
$W^+$ and $W^-$  satisfy the following:
\begin{align}\label{0011}
\begin{cases}
&W^\pm\in C^\infty(0,\infty),
\qquad
0< W^\pm<t^\pm  ~\text{for~all }~\ell>0,
\\[2mm]
&W^\pm\sim \ell ~\text{as }~\ell\rightarrow 0,
\qquad
W^\pm\sim t^\pm-\frac{c_\pm}{2\ell^2}~\text{as }~\ell\rightarrow \infty,
\\[2mm]
&{W^\pm}'>0~\text{when}~B<0,
\qquad
{W^\pm}'\sim \frac{c_\pm}{\ell^3}~\text{as }~\ell\rightarrow \infty,
\end{cases}
\end{align}
where
$$
c_\pm=\frac{A_\mp-B}{(A_+A_--B^2)t^\pm}.
$$
Moreover, $(W^+, W^-)$ is the unique solution of \eqref{1.2}.
\qed
 \end{lemma}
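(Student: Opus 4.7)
I would organize the proof into four ingredients. For \emph{existence, regularity, and the pointwise bounds} $0 < W^\pm < t^\pm$, the natural setting is variational: obtain $W = (W^+, W^-)$ as a minimizer of the radial energy
\begin{align*}
\mathcal{J}(W) = \int_0^\infty \ell\,\Big[\tfrac{1}{2} ({W^+}')^2 + \tfrac{1}{2} ({W^-}')^2 + \tfrac{(W^+)^2 + (W^-)^2}{2\ell^2} + \tfrac{1}{4} V(W^+, W^-)\Big]\, d\ell,
\end{align*}
with $V$ the quartic potential of \eqref{original}, over the affine class of pairs satisfying $W^\pm(0) = 0$ and $W^\pm - t^\pm$ decaying fast enough to render the integral finite. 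Hypothesis \eqref{H1} makes $V \geq 0$ with zero set $\{|W^\pm| = t^\pm\}$, which gives coercivity for the direct method. The truncation $W^+ \mapsto \max\{0, \min\{W^+, t^+\}\}$ (and similarly for $W^-$) does not increase the energy, forcing $0 \leq W^\pm \leq t^\pm$. Smoothness on $(0, \infty)$ is routine elliptic regularity for the ODE, and the strong maximum principle applied to the scalar equation for each component (treating the other as a coefficient of controlled sign) upgrades the bounds to strict inequalities.

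\emph{Asymptotics.} Near $\ell = 0$, the leading part of each equation is the Bessel operator $f \mapsto -f'' - f'/\ell + f/\ell^2$, whose regular solution is $f = a\ell$; a Frobenius expansion then yields $W^\pm = a^\pm \ell + O(\ell^3)$ with $a^\pm > 0$. Near $\ell = \infty$, I would substitute $W^\pm = t^\pm + u^\pm$ with $u^\pm \to 0$ and linearize to reach
\begin{align*}
\begin{pmatrix} 2A_+ t^+ & 2B t^- \\ 2B t^+ & 2A_- t^- \end{pmatrix} \begin{pmatrix} u^+ \\ u^- \end{pmatrix} = -\frac{1}{\ell^2}\begin{pmatrix} t^+ \\ t^- \end{pmatrix} + \text{(higher order)}.
\end{align*}
The coefficient matrix has determinant $4(A_+ A_- - B^2)\,t^+ t^- > 0$ by \eqref{H1}, hence is invertible, and solving yields $u^\pm = -c_\pm/(2\ell^2) + o(\ell^{-2})$ with exactly the stated $c_\pm$. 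A contraction-mapping argument in a weighted $L^\infty$ space upgrades this formal ansatz to a rigorous expansion, and differentiation gives ${W^\pm}' \sim c_\pm / \ell^3$.

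\emph{Monotonicity when $B < 0$.} This I expect to be the main obstacle, since the coupling blocks a scalar maximum-principle argument. My approach is direct ODE bookkeeping: if $W^+$ admitted an interior local maximum at $\ell_0$, then $(W^+)'(\ell_0) = 0$ and $(W^+)''(\ell_0) \leq 0$, so the ODE gives
\[
\tfrac{1}{\ell_0^2} \leq A_+\bigl({t^+}^2 - {W^+(\ell_0)}^2\bigr) + B\bigl({t^-}^2 - {W^-(\ell_0)}^2\bigr);
\]
under $B < 0$ and $W^- < t^-$ the second term on the right is nonpositive. Playing this inequality against the companion one at the next local minimum of $W^+$, and against the analogous pair for $W^-$, produces an incompatibility with the global asymptotics $W^\pm \to t^\pm$. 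Hence no interior extremum exists and ${W^\pm}' \geq 0$; the strong maximum principle applied to the linear equation satisfied by $\ell\,{W^\pm}'$ upgrades this to strict positivity.

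\emph{Uniqueness.} Given two admissible solutions $W$ and $\widetilde W$, the difference $\phi = W - \widetilde W$ satisfies a linear radial system whose potential block, after symmetrization, is positive definite thanks to $B^2 < A_+ A_-$ and the pointwise bounds $W^\pm, \widetilde W^\pm \in (0, t^\pm)$. Multiplying by $\ell\,\phi^\pm$, integrating, and using that the boundary terms at $0$ and $\infty$ vanish by the asymptotics produces a nonnegative quadratic form that vanishes only when $\phi \equiv 0$.
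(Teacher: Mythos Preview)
The paper does not prove this lemma: it is quoted verbatim from Alama--Gao \cite{AG} and closed with a \qed, so there is no in-paper argument to compare against. Your sketch is a sensible reconstruction of the standard route (variational existence with truncation, Frobenius expansion at $0$, algebraic balance at $\infty$, maximum-principle/ODE reasoning for monotonicity, energy identity for uniqueness) and is in the spirit of what \cite{AG} carries out.

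Two small remarks on the details. First, in your linearization at infinity the matrix and right-hand side are mismatched: if you keep your matrix $\bigl(\begin{smallmatrix}2A_+t^+ & 2Bt^-\\ 2Bt^+ & 2A_-t^-\end{smallmatrix}\bigr)$ then the right-hand side should be $-\ell^{-2}(1,1)^T$ rather than $-\ell^{-2}(t^+,t^-)^T$ (this comes from dividing the $\pm$-equation by $t^\pm$). Either version gives the stated $c_\pm$, but as written the two sides are inconsistent. Second, your monotonicity paragraph is the weakest link: the inequality you extract at a hypothetical interior maximum is correct, but ``playing it against the companion one at the next local minimum'' hides real work, since a priori the extrema of $W^+$ and $W^-$ need not be correlated. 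A cleaner argument (and closer to what is actually done for such cooperative systems when $B<0$) is to note that $B<0$ makes the off-diagonal coupling \emph{cooperative} for the pair $(W^+,W^-)$, so one can run a joint moving-plane or sliding argument, or directly study the linear system satisfied by $({W^+}',{W^-}')$ and apply a vector maximum principle. Your endpoint conclusion is right; just be aware that the scalar max-at-$\ell_0$ bookkeeping alone is not quite enough.
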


\medskip
The linear Ginzburg-Landau operator  of \eqref{2dgl}  around the standard vortex $w=(w^+,w^-)$ is given by
\[  L_0 (\phi) := \big(L_0^+, L_0^-\big)(\phi),   \]
with
\begin{align}
L_0^\pm  (\phi)   &=  \Delta \phi^\pm + \Big[ A_\pm \big({t^\pm}^2-{W^\pm}^2 \big)+B\big({t^\mp}^2- {W^\mp}^2\big)\Big]\phi^\pm  \nonumber
\\[2mm]
&\quad-2A_\pm {\rm{Re}}\Big(w^\pm\overline{\phi^\pm}\Big)w^\pm\,-\,2B {\rm{Re}}\Big(w^\mp \overline{\phi^\mp}\Big)w^\pm.
\end{align}
S. Alama and Q. Gao in \cite{AG2}  also  studied the stability   of $w(\tilde z)$ in the sense of spectrum of the  linear operator $L_0(\phi)$  in a disk of $\R^2$.
Furthermore, in \cite{Duan-Yang},   L. Duan and J. Yang studied the  linearized operator $L_0$. Under an extra assumption $B < 0$, they proved the non-degeneracy result of degree pair $(+1,+1)$ vortex solution in a natural Hilbert space $\mathscr H$  endowed with the norm $\|\cdot\|_{\mathscr H}$ in the following form
\begin{align}
    \| \phi\|_{\mathscr H}& = \int_{\R^2}  (    |\nn \phi^+|^2 +  |\nn \phi^-|^2)
   +    \int_{\R^2}  \Big[ A_+ \big({t^+}^2-{W^+}^2 \big)-B\big({t^-}^2- {W^-}^2\big)\Big]|\phi^+|^2
  \nonumber
\\[2mm]
& \quad
 +    \int_{\R^2}  \Big[ A_+ \big({t^-}^2-{W^-}^2 \big)-B\big({t^+}^2- {W^+}^2\big)\Big]|\phi^-|^2.
\end{align}
For convenience of reader, we   present the   non-degeneracy result   in the following  lemma.
 \begin{lemma}\label{nodegeneracy-theorem1.1} (\cite{Duan-Yang}).
Assume that (\ref{H1}) and $B<0$ hold.
Suppose that $L_0(\phi)=0$ for some $\phi\in \mathscr H$.
Then we have
$$
\phi=c_1\frac{\partial w}{\partial x_1}+c_2\frac{\partial w}{\partial x_2},
$$
for certain constants $c_1$ and $c_2$.
\qed
\end{lemma}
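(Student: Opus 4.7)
The natural strategy is angular Fourier decomposition in $\theta$, which reduces $L_0(\phi)=0$ to a decoupled family of linear ODE systems on $(0,\infty)$, and then to analyze the kernel mode by mode inside the weighted space $\mathscr H$.

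First I would write $\phi^\pm(\ell,\theta)=e^{i\theta}\psi^\pm(\ell,\theta)$, pulling out the phase of $w^\pm$, and split $\psi^\pm=u^\pm+iv^\pm$ into real and imaginary parts. Because $w^\pm=W^\pm e^{i\theta}$, the $\mathbb R$-linear (non $\mathbb C$-linear) terms $\mathrm{Re}(w^\pm\overline{\phi^\pm})w^\pm$ and $\mathrm{Re}(w^\mp\overline{\phi^\mp})w^\pm$ couple only the real parts $u^\pm$ to the equation. Expanding $u^\pm$ and $v^\pm$ in Fourier series in $\theta$, the linearization decouples into mutually orthogonal (in $\mathscr H$) blocks indexed by $n\geq 0$; each block is a linear ODE system on $(0,\infty)$ in finitely many scalar unknowns, with coefficients built from $W^\pm$ and the integer $n$.

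For $n\geq 2$ I would prove coercivity of the block quadratic form directly: the centrifugal terms produced by the polar Laplacian dominate the potential perturbations, which are controlled using the pointwise bound $0<W^\pm<t^\pm$ from Lemma \ref{lemmaofW} together with $B<0$, so the $\mathscr H$-kernel on these modes is trivial. For $n=0$ the block decouples further into independent $u^\pm$- and $v^\pm$-subsystems. The $v^\pm$-subsystem corresponds to infinitesimal global phase rotation, i.e.\ $\phi=iw$; this is excluded because $|iw|^2=(W^+)^2+(W^-)^2\to 1$ at infinity while the $\mathscr H$-weights $(t^\pm)^2-(W^\pm)^2$ decay only like $\ell^{-2}$, giving a non-$L^1$ integrand on $\mathbb R^2$. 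The $u^\pm$-subsystem is positive definite thanks to $B<0$ and the strict inequality $W^\pm<t^\pm$.

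The heart of the argument, and the main obstacle, is the mode $n=1$, which contains the translations: a short computation with $\partial_{x_1}=\cos\theta\,\partial_\ell-\frac{\sin\theta}{\ell}\partial_\theta$ shows that $e^{-i\theta}\partial_{x_j}w^\pm$ lies exactly in the $e^{\pm i\theta}$ subspace of $\psi^\pm$. The goal is to prove that the $\mathscr H$-kernel of the corresponding $4\times 4$ ODE system on $(0,\infty)$ is exactly two-dimensional. I would first verify, by differentiating (\ref{1.2}) in $x_1$ and $x_2$, that both translations are solutions; then classify local solutions at the two singular endpoints using the asymptotics in Lemma \ref{lemmaofW} (at $\ell=0$ the system is Euler-type with indicial exponents fixed by $W^\pm\sim\ell$; as $\ell\to\infty$ it is a constant-coefficient limit perturbed by $O(\ell^{-2})$, since $W^\pm-t^\pm=O(\ell^{-2})$). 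Matching by a Wronskian/shooting argument, and exploiting the monotonicity ${W^\pm}'>0$ from Lemma \ref{lemmaofW} (which holds precisely because $B<0$) to rule out extra oscillatory modes, I would show that any $\mathscr H$-solution is a linear combination of $\partial_{x_1}w$ and $\partial_{x_2}w$. Reassembling the modes gives the statement. The sign condition $B<0$ enters both in the coercivity of the higher-mode quadratic forms and in supplying the monotonicity information used in the mode-$1$ Wronskian step.
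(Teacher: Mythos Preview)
The paper does not give a proof of this lemma at all: it is quoted, with the \qed marker, from the reference \cite{Duan-Yang} (L.~Duan and J.~Yang, \emph{Discrete Contin.\ Dyn.\ Syst.}\ 41 (2021), 4767--4790). So there is no in-paper proof to compare against; your proposal is really a sketch of how to reprove the cited result.

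Your outline --- angular Fourier decomposition of $\phi^\pm=e^{i\theta}\psi^\pm$, mode-by-mode analysis, coercivity for $|n|\ge 2$, exclusion of the phase rotation $iw$ from $\mathscr H$ in mode $0$, and identification of the mode-$1$ kernel with the translations --- is exactly the standard template for such nondegeneracy theorems (cf.\ the scalar Ginzburg--Landau case) and is almost certainly the route taken in \cite{Duan-Yang}. The exclusion of $iw$ is correct: both $\int|\nabla w|^2$ and the weighted $L^2$ piece diverge logarithmically since $(t^\pm)^2-(W^\pm)^2\sim c\,\ell^{-2}$.

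That said, as written your mode-$1$ step is only a plan, not an argument. The phrase ``matching by a Wronskian/shooting argument, and exploiting ${W^\pm}'>0$ to rule out extra oscillatory modes'' hides all the work: in the coupled two-component setting you have a genuine system, and monotonicity of $W^\pm$ alone does not immediately give a comparison principle or sign information on a putative second $\mathscr H$-solution. In \cite{Duan-Yang} this step is carried out carefully and is where the hypothesis $B<0$ is used substantively; if you want a self-contained proof you would need to supply either a factorization/conjugation that makes the mode-$1$ operator manifestly nonnegative with equality only on the span of $\partial_{x_j}w$, or an explicit asymptotic analysis at both endpoints showing that the $8$-dimensional solution space of the first-order system intersects $\mathscr H$ in exactly the two translations.
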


\medskip
Note that we also get the non-degeneracy result for $w$ with the degree pair $   (-1,-1), (-1,1) $ or $  (1,-1) $  in \cite{Duan-Yang} under the same constraints.
Furthermore,   in the Appendix,  we  show that  the  operator $L_0(\phi)$ does have one kernel if $\phi$   enjoys  some symmetry and decay assumptions,
see Lemmas \ref{lem:ellipticestimatesL0}-\ref{lem:ellipticestimatesL0-b}.

\medskip
There are  extensive researches  on solutions  with  vortex   structures  to the classical Ginzburg-Landau equation  with suitable boundary conditions, see \cite{BethuelBrezisOrlandi, jerrardsoner, LinRiviere1999, LinRiviere2001, Riviere1996}.  However,   for  the Ginzburg-Landau system,
much less is known about  the vortices, especially for solutions with vortex filaments in the case with  higher dimension $N\ge 3$.
The non-degeneracy arguments    provided  in \cite{Duan-Yang} for the vortex solutions with standard degree pair $(+1, +1)$ to the system \eqref{2dgl} make it possible to construct vortex solutions by using the singular perturbation methods.
Inspired by \cite{DDMR}, we  construct new solutions of \eqref{original}  with helical vortex filaments  in  $\R^3$.
We now  present the first result as follows:

\begin{theorem} \label{theorem1}
Assume that (\ref{H1}) and $B<0$ hold.
For any integer  $k \ge 2$ and $\epsilon$ small enough,  there exists a  solution
$w_\epsilon(\tilde z,t) = \big(w_\epsilon^+(\tilde z,t), w_\epsilon^-(\tilde z,t) \big)\in \mathbb{C}^2$
of \eqref{original} with  $N=3$  in the form
\begin{align}   \label{wepsilon}
w_\epsilon(\tilde z,t) = \Bigg(( t^+)^{1-k}\prod_{j=1}^{k}w^+\Big(\frac{\tilde z-g^+_{\epsilon, j}(t)} {\epsilon}\Big), \, ( t^-)^{1-k}\prod_{j=1}^{k}w^-\Big(\frac{\tilde z-g^-_{\epsilon, j}(t)} {\epsilon}\Big) \Bigg) + \phi_\epsilon(\tilde z,t),
\end{align}
where  $\phi_\epsilon(\tilde z, t) = \big( \phi_\epsilon^+(\tilde z,t), \phi_\epsilon^-(\tilde z,t)\big)\in \mathbb{C}^2$
and all $g^\pm_{\epsilon, j}(t)$'s are $2\pi$ periodic in variable $t$ which are given by
\begin{align}
 g^\pm_{\epsilon, j} (t)\approx\frac 1{\sqrt{|\ln \epsilon|}} \sqrt{k-1} e^{it} e^{i  \frac{2(j-1) \pi}{k}  }, \quad\text{for}~j=1,\cdots k.
\end{align}
Furthermore, we also have
\begin{align}
|\phi_\epsilon^\pm |  \le \frac{D}{|\ln \epsilon|}, \quad \text{for some constant $D>0$},
\end{align}
and
\begin{align}  \label{uniform convengence for system}
  \lim_{|\tilde z| \rightarrow +\infty}  | w_{\epsilon} (\tilde z,t) |=   \lim_{|\tilde z| \rightarrow +\infty}\sqrt{| w^+_{\epsilon} (\tilde z,t) |^2 + | w^-_{\epsilon} (\tilde z,t) |^2}= 1, \quad \text{uniformly in}~t.
\end{align}
\qed
\end{theorem}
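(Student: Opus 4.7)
The plan is to follow the Lyapunov--Schmidt strategy of DDMR, adapted to the coupled system by means of the non-degeneracy result of Lemma \ref{nodegeneracy-theorem1.1}. First I would build an approximate solution $W_\epsilon = (W_\epsilon^+, W_\epsilon^-)$ of the form displayed in \eqref{wepsilon} (without $\phi_\epsilon$), using translated copies of the 2D vortex profile of Lemma \ref{lemmaofW}; the normalizing factor $(t^\pm)^{1-k}$ ensures that $|W_\epsilon^\pm| \to t^\pm$ at infinity, so \eqref{uniform convengence for system} is automatic. Using the helical invariance $(\tilde z,t)\mapsto (e^{i\alpha}\tilde z,\, t+\alpha)$ of the problem, one can reduce the 3D equation to an effectively 2D problem for $w_\epsilon$ in the $\tilde z$-plane, with a transport-type lower order term produced by $\partial_t$. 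Imposing $2\pi$-periodicity in $t$ and the $k$-fold rotational symmetry $g^\pm_{\epsilon,j}(t) = e^{2\pi i(j-1)/k} g^\pm_{\epsilon,1}(t)$ reduces the unknown to one curve per component.

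Next I would compute the error $S_\epsilon(W_\epsilon) = (S_\epsilon^+, S_\epsilon^-)$. Near each vortex $\tilde z \approx g^\pm_{\epsilon,j}(t)$ the error splits into three pieces: (a) a contribution from $\epsilon^2\partial_t^2$ yielding curvature terms in $(g^\pm_j)''$, (b) interaction terms between vortices of the same component producing the logarithmic two-body potential, and (c) cross-coupling between $w^+$ and $w^-$ through $B$, which is small because $W^\mp \approx t^\mp$ away from its own vortices by Lemma \ref{lemmaofW}. Expanding the energy along $W_\epsilon$ yields a reduced functional
\begin{equation*}
\mathcal I_\epsilon(\mathbf g) \approx \pi \int_0^{2\pi} \Big( |\ln \epsilon| \tfrac12 \sum_{\pm,l} |{g^\pm_l}'(t)|^2 - \sum_{\pm}\sum_{j\neq l}\log|g_j^\pm(t)-g_l^\pm(t)| \Big)\, dt,
\end{equation*}
whose critical points under the symmetry ansatz give, to leading order, the rigidly rotating helices of \eqref{location of vortex line}; at leading order the equations for $g^+$ and $g^-$ decouple, since the $B$-coupling contributes at lower order.

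The technical heart of the argument is the invertibility of the linearization around $W_\epsilon$. Writing $\phi = (\phi^+,\phi^-)$, this linearization is a perturbation of $k$ shifted copies of the 2D operator $L_0$ from Lemma \ref{nodegeneracy-theorem1.1}, coupled by off-diagonal $B$-terms. By the non-degeneracy of $L_0$ in $\mathscr H$ (which requires $B<0$), its kernel is spanned by $\partial_{x_1}w$ and $\partial_{x_2}w$; translated to each vortex position, these generate the approximate kernel of the full linear operator. Working in weighted Sobolev spaces that impose $L^2$-orthogonality to these local kernels and encode the $|\ln\epsilon|^{-1}$ size of the perturbation, together with the symmetry constraints, I would prove an a priori estimate and then apply a contraction mapping argument to obtain a unique $\phi_\epsilon = \phi_\epsilon[\mathbf g]$ solving the projected equation, with $\|\phi_\epsilon\|_{L^\infty} \lesssim |\ln\epsilon|^{-1}$.

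Finally I would solve the reduced problem obtained by demanding that the Lagrange multipliers (projections onto the local kernels) vanish. This gives an ODE system for $(g^+_j(t), g^-_j(t))$ that is an $O(|\ln\epsilon|^{-1})$ perturbation of the leading-order system
\begin{equation*}
-(\tilde g_l^\pm)''(t) = 2\sum_{i\neq l} \frac{\tilde g_l^\pm(t)-\tilde g_i^\pm(t)}{|\tilde g_l^\pm(t)-\tilde g_i^\pm(t)|^2},
\end{equation*}
whose rigidly rotating $2\pi$-periodic solutions are non-degenerate modulo the obvious symmetries (this is the input from \cite{DDMR}); an implicit function theorem in a neighborhood of this explicit solution closes the construction. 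The main obstacle is the third step: lifting the scalar non-degeneracy of Lemma \ref{nodegeneracy-theorem1.1} to the full helical, multi-vortex, coupled setting and controlling the $B$-cross-coupling between $\phi^+$ and $\phi^-$ near the vortex cores. The cross terms are harmless where $W^\pm$ are close to their constants, but genuinely interact near the cores; handling them requires a careful decomposition of $\phi$ along the local kernels together with the $\mathscr H$-coercivity that crucially uses $B<0$.
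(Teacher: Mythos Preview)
Your overall architecture is right and matches the paper's, but there is a genuine gap in the final step that would make the argument fail as written.

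After imposing the screw symmetry, the vortex curves are \emph{automatically} exact helices $g_j(t)=\epsilon\tilde d\, e^{it}e^{2\pi i(j-1)/k}$; the only free parameter is the scalar radius $\hat d$, not a pair of $2\pi$-periodic curves per component. So the reduced problem is a single scalar equation, not an ODE system to which one can apply an implicit function theorem around a nondegenerate periodic orbit. More importantly, the claim that the reduced equation is an ``$O(|\ln\epsilon|^{-1})$ perturbation'' of the leading system is where the argument breaks. With only the estimate $\|\psi_\epsilon\|_*\lesssim |\ln\epsilon|^{-1}$ coming out of the contraction step, the projection of the nonlinear term $\mathcal N(\psi)$ onto the kernel is of size $O(|\ln\epsilon|^{-2})$, whereas the leading terms of the reduced equation (the curvature and interaction pieces) are only of order $\epsilon\sqrt{|\ln\epsilon|}$. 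Since $\epsilon\sqrt{|\ln\epsilon|}\ll |\ln\epsilon|^{-2}$, the nonlinear contribution swamps the signal and you cannot solve for $\hat d$.

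The paper resolves this by a further Fourier decomposition of both the error $\mathcal R$ and the perturbation $\psi$ into odd and even modes in the angular variable around each vortex (Section~\ref{section3.2} and Proposition~\ref{prop:sharp2b}), together with auxiliary semi-norms $|\cdot|_\sharp$, $|\cdot|_{\sharp\sharp}$. Only odd modes pair nontrivially with $w_{x_1}$, and one shows that the odd parts satisfy the much better bounds $|\psi_o^\alpha|_\sharp\lesssim \epsilon/\sqrt{|\ln\epsilon|}$ and $\|\psi_o^\beta\|_*\lesssim \epsilon\sqrt{|\ln\epsilon|}$; propagating this through $\mathcal N(\psi)$ brings its odd projection down to $O(\epsilon/\sqrt{|\ln\epsilon|})$, which is now lower order. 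Without this refinement the reduction does not close. A secondary point: the paper does not use a bare additive perturbation $\phi$ but the additive--multiplicative ansatz $v^\pm=\eta v_d^\pm(1+i\psi^\pm)+(1-\eta)v_d^\pm e^{i\psi^\pm}$, precisely so that far from the vortices the linearized operator decouples into a Laplacian on $\mathrm{Re}\,\psi^\pm$ and a coercive (thanks to \eqref{H1}) operator on $\mathrm{Im}\,\psi^\pm$; the $B$-cross terms then act only on the imaginary parts and are handled by diagonalizing the positive-definite matrix $\mathbb M$, not by $\mathscr H$-coercivity per se.
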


\medskip
 Note that,  due to  \eqref{uniform convengence for system},  the solutions   given in Theorem \ref{theorem1} satisfies   the asymptotic property  \eqref{asymptotic}.
Theorem \ref{theorem1} also gives  a negative answer to an analogue  of  Gibbons conjecture (or De Giorgi  conjecture) for the   Ginzburg-Landau system.

\medskip
In Theorem \ref{theorem1}, we use the degree pair $(+1, +1)$ vortex solution $w$   as the  building block to construct  solutions.
Thus the solutions constructed in   Theorem \ref{theorem1}  consist in $k$ vortex helices of degree pair  $ (+1, +1)$.
Note that in \cite{Duan-Yang}, the authors showed that  $\hat w =(  w^+, \overline{w^-}) $,  the degree pair $(+1, -1)$ vortex solution of \eqref{2dgl},  is   non-degenerate.
Taking $\hat w$ as the building block and  using the same   method,  under the same conditions in Theorem \ref{theorem1} we can  construct solution    $\hat w_\epsilon(\tilde z,t) $  which  consists  in  $ k $   vortex helices of degree pair  $(1, -1)$  with the form
\begin{align}
\hat w_\epsilon(\tilde z,t)  \approx \Bigg(( t^+)^{1-k}\prod_{j=1}^{k}  w^+\Big(\frac{\tilde z-g^+_{\epsilon, j}(t)} {\epsilon}\Big), \, ( t^-)^{1-k}\prod_{j=1}^{k} \overline{ w^-}\Big(\frac{\tilde z-g^-_{\epsilon, j}(t)} {\epsilon}\Big) \Bigg).
\end{align}
%
%
%

\medskip
 Analogous to the work in \cite{DDMR},
 we also consider  solutions to \eqref{original}  which consist in $ k\ge 4$ vortex helices  of degree pair $(+1, +1)$  rotating around a straight vortex filament of degree pair $(-1,-1)$. The result is as below.

\begin{theorem}\label{theorem2}
Assume that (\ref{H1}) and $B<0$ hold.
For any integer $k\ge 4$,   there exists  $\epsilon_0 >0$  small such that for  every  $\epsilon <\epsilon_0$,
there exists a solution $w_\epsilon$ solving \eqref{original} with $N=3$.
Furthermore, the solution can be written in the form
\begin{align}
 w_\epsilon(\tilde z,t)  \approx \Bigg( ( t^+)^{-k}\, \overline{w^+} (\tilde z) \prod_{j=1}^{k} w^+\Big(\frac{\tilde z-  {\tilde g}^+_{\epsilon, j}(t)} {\epsilon}\Big),
 \,
 ( t^-)^{-k}\,\overline{w^-} (\tilde z)\prod_{j=1}^{k}  w^-\Big(\frac{\tilde z-{\tilde g}^-_{\epsilon, j}(t)} {\epsilon}\Big) \Bigg),
 \end{align}
 where $\tilde g^\pm_{\epsilon, j}$ is $2 \pi $ periodic in $t$ and satisfies  the asymptotic behavior
 \[ \tilde g^\pm_{\epsilon, j} (t)\approx\frac 1{\sqrt{|\ln \epsilon|}} \sqrt{k-3} e^{it} e^{i  \frac{2(j-1) \pi}{k}  }, \quad\text{for}~j=1,\cdots, k.\]
\qed
 \end{theorem}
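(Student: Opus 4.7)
The plan is to adapt the Lyapunov--Schmidt scheme developed for Theorem \ref{theorem1} by enlarging the family of approximate solutions so as to include a straight degree $(-1,-1)$ filament sitting on the $t$-axis. Concretely, given position parameters $\mathbf g=(g_1,\ldots,g_k)$ with each $g_l\colon\mathbb R/2\pi\mathbb Z\to\mathbb R^2$, I would take the ansatz
$$
W_\epsilon(\tilde z,t)=\Bigl((t^+)^{-k}\,\overline{w^+}(\tilde z)\prod_{j=1}^k w^+\bigl(\tfrac{\tilde z-g_j(t)}{\epsilon}\bigr),\ (t^-)^{-k}\,\overline{w^-}(\tilde z)\prod_{j=1}^k w^-\bigl(\tfrac{\tilde z-g_j(t)}{\epsilon}\bigr)\Bigr),
$$
and seek an exact entire solution of the form $w_\epsilon=W_\epsilon+\phi_\epsilon$.

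First I would estimate the error $S(W_\epsilon)$ produced when $W_\epsilon$ is substituted into \eqref{original} and expand the energy $\mathbf E(W_\epsilon)$. The derivation runs parallel to that of Theorem \ref{theorem1}, but each helical vortex at $g_l(t)$ now additionally interacts with the stationary, oppositely-signed vortex at the origin, contributing an attractive logarithmic potential. Carrying out the formal Bethuel--Brezis--H\'elein-type expansion one arrives at the reduced energy
$$
\mathcal I_\epsilon(\mathbf g)=\pi\int_0^{2\pi}\!\Bigl(\,\frac{|\ln\epsilon|}{2}\sum_{l=1}^k|g_l'(t)|^2\,-\,\sum_{j\neq l}\log|g_j(t)-g_l(t)|\,+\,2\sum_{l=1}^k\log|g_l(t)|\,\Bigr)\,dt,
$$
modulo $\mathbf g$-independent terms of order $|\ln\epsilon|$. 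After the rescaling $g_l=\tilde g_l/\sqrt{|\ln\epsilon|}$ the Euler--Lagrange system becomes
$$
-\tilde g_l''(t)=2\sum_{j\neq l}\frac{\tilde g_l(t)-\tilde g_j(t)}{|\tilde g_l(t)-\tilde g_j(t)|^2}\,-\,2\,\frac{\tilde g_l(t)}{|\tilde g_l(t)|^2},
$$
and the rotating symmetric family $\tilde g_l(t)=\rho\,e^{it}\,e^{2\pi i(l-1)/k}$ satisfies this if and only if $\rho^2=(k-1)-2=k-3$, which forces $k\geq 4$ and yields the asymptotic positions claimed in the statement.

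Next I would perform the Lyapunov--Schmidt reduction. The linearisation of \eqref{original} at $W_\epsilon$ is, up to small perturbations, a sum of $k$ translates of the two-dimensional operator $L_0$ centred on the helical vortices, plus one conjugated copy centred at the origin accounting for the $(-1,-1)$ filament; by Lemma \ref{nodegeneracy-theorem1.1} together with the analogues for the pairs $(-1,-1),(1,-1),(-1,1)$ mentioned after it, the only bounded kernel elements are the translation modes at each vortex. Projecting orthogonally to these modes and using Fourier decomposition in $t$ to exploit $2\pi$-periodicity (as in \cite{DDMR}), I would solve the linearised equation in a suitable weighted Sobolev space and obtain a remainder $\phi_\epsilon$ with $|\phi_\epsilon^\pm|\leq C/|\ln\epsilon|$. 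The resulting reduced bifurcation equations for the positions $\tilde g_{\epsilon,j}^\pm$ amount to a $|\ln\epsilon|^{-1}$-perturbation of the Euler--Lagrange system above, and the non-degeneracy of the symmetric critical point $\rho=\sqrt{k-3}$ (modulo the continuous rotation and reflection symmetries, which are fixed by a standard gauge condition on $g_1(0)$) allows an implicit function theorem argument to locate $\tilde g_{\epsilon,j}^\pm$.

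I expect the principal obstacles to be twofold. First, unlike in Theorem \ref{theorem1}, the background is no longer asymptotically close to the constant state $(t^+,t^-)$ on bounded regions near the $t$-axis, because $\overline w(\tilde z)$ itself vanishes linearly at $\tilde z=0$; the cancellations responsible for the new $+2\log|g_l|$ interaction term must therefore be tracked carefully, including the mildly singular behaviour of $S(W_\epsilon)$ close to the central filament, to ensure that the error estimates still fit within the $O(|\ln\epsilon|^{-1})$ budget needed to close the fixed-point argument. Second, the central filament carries its own two-dimensional family of translational zero modes, which would enlarge the kernel of the linearised operator; the cleanest way to deal with them is to restrict the construction to the $\mathbb Z_k$-symmetric configurations $g_l(t)=e^{2\pi i(l-1)/k}g_1(t)$ (which are compatible with the symmetric critical point), since this symmetry forces the central translation modes to vanish and reduces the problem back to the framework used for Theorem \ref{theorem1}.
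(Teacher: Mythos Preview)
Your proposal is broadly correct and follows the same strategy the paper uses (the paper itself only states that Theorem~\ref{theorem2} ``can be proved by an exactly similar way'' to Theorem~\ref{theorem1}). Your reduced-energy computation is right: the central $(-1,-1)$ vortex contributes $+2\sum_l\log|g_l|$, the Euler--Lagrange system for the rotating polygon gives $\rho^2=(k-1)-2=k-3$, and this both reproduces the asymptotic positions in the statement and explains the constraint $k\ge 4$. Your plan to eliminate the two extra translation modes of the central filament by restricting to $\mathbb Z_k$-equivariant configurations is exactly what is needed and is compatible with the paper's symmetry framework.

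Two technical points where your outline diverges from the paper's machinery are worth flagging. First, the paper does not work directly in three dimensions: it uses the screw symmetry (Definition~\ref{screwsim}) to reduce \eqref{original} to a genuinely two-dimensional problem \eqref{GL2-dimensionalrescaled}, and then employs the additive--multiplicative ansatz \eqref{eq1} rather than a simple additive perturbation $W_\epsilon+\phi_\epsilon$; this is what makes the linear operator $\mathcal L_\epsilon$ tractable away from the vortices. Second, the ``Fourier decomposition'' in the paper is not in the $t$-variable but in the angular variable $\theta_j$ around each vortex (Section~\ref{section3.2}), and the accompanying sharp norms $|\cdot|_\sharp$, $|\cdot|_{\sharp\sharp}$ are essential to close the reduction in Section~\ref{section77}, since the naive $\|\psi\|_*\lesssim|\ln\epsilon|^{-1}$ estimate is too coarse to force $c=0$. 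Your proposal does not mention these refinements, but they carry over verbatim once you append the extra $(-1,-1)$ block to the approximate solution, so the gap is one of detail rather than of idea.
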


\medskip
Theorems \ref{theorem1} and \ref{theorem2}   establish the existence  of  multiple helical  vortex filaments for Ginzburg-Landau system \eqref{original} in $\mathbb{R}^3$.
In fact, the   existence  of a solution with a  multiple-helices vortex  structure is an interesting question.
In   classical fluid and Bose-Einstein condensates, experiments and numerical methods show that     the possible existence of arbitrary number of interacting helical vortex filaments, see \cite{AKO, HK}.
The reader can also refer to   \cite{DDMR1, DDMW, JiangWangYang, LinWei, WeiJun2016} for the constructions of solutions with helical vortex structure, involving Schr\"odinger map equation, Gross-Pitaevskii equation and Euler equation.

\medskip
For  convenience, we  just provide  the proof of Theorem \ref{theorem1} in the present paper and Theorem \ref{theorem2} can be proved by an exactly similar way.
Here is the outline of procedure.

\smallskip  \noindent $\bullet$
Using  the {\em screw symmetry} in \eqref{conditionsforscrew},  we reduce the  original problem \eqref{original} to  a $2$-dimensional  one, see \eqref{GL2-dimensionalrescaled}.
This step will be  done in  Section \ref{section2.1}.

\smallskip  \noindent $\bullet$
Next, we will  construct an approximation to the real solutions of \eqref{GL2-dimensionalrescaled} which is denoted by $v_d=(v_d^+, v_d^-)$, see \eqref{eq:Approx2-dimensionalrescaled}-\eqref{eq:Approx2-dimensionalrescaledpm}.

\smallskip  \noindent $\bullet$
Note that the    coupling between the real part and  imaginary part  in   the linearized problem   of Ginzburg-Landau system \eqref{2dgl}  around $v_d$
would  make the setting very complicated.
We would use the idea in \cite{delPinoKowalczykMusso2006}  to overcome this obstacle.
More precisely speaking,  for a perturbation,
$$
\psi= (\psi^+, \psi^-) = (\psi^+_1+ i\psi^+_2, \psi^-_1+ i\psi^-_2) \in \mathbb{C}^2
$$
with symmetry \eqref{symetry1}, we take the solution in the form of \eqref{eq1}.
Then we  rewrite  the original problem to a  perturbation of the linearized problem (see \eqref{result})
 \[ \mathcal L_\epsilon\psi +\rR+ \mathcal N(\psi) =0,   \]
where $ \mathcal L_\e = \Big(\mathcal L_\e^+, \mathcal L_\e^-\Big)$ is the linearized operator,
$\rR=\Big(\rR^+, \rR^-\Big) $ is the error term,
and $ \mathcal N(\psi) = \Big(\mathcal N^+(\psi), \mathcal N^-(\psi) \Big) $ is the nonlinear term of higher order.
When  far away from  the vortices,
the linearized operator $\mathcal L_\e\psi$ behaves as below
\begin{align*}
\mathcal L_\e\psi  \approx    \begin{cases}
 \Delta \psi^+   - 2i A_\pm |v_d^+|^2 {\rm{Im}}(\psi^+) -2 i B   |v_d^-|^2 \rm{Im}(\psi^-)
 \\[3mm]
  \Delta \psi^-   - 2i A_\pm |v_d^+|^2 {\rm{Im}}(\psi^-) -2 i B   |v_d^-|^2 \rm{Im}(\psi^+).
 \end{cases}
\end{align*}
Although there are also  coupling between the  $``+"$ part and $``-"$ part,
 the operator   $\mathcal L_\e\psi $ is good far  away from vortices
due to the  positive definite  assumption \eqref{H1}.    On the other hand, when near the vortices,  from  Lemma \ref{Lepsilon},
Lemmas \ref{lem:ellipticestimatesL0}-\ref{lem:ellipticestimatesL0-b}, we would  deal with the problem by  setting up a projected nonlinear problem, see \eqref{eq:linear5.1}.

\smallskip  \noindent $\bullet$
Section \ref{section Error estimate of Approximation} is devoted to estimating  the  error in a suitable weighted norm.

\smallskip  \noindent $\bullet$
We study and solve  the  projected nonlinear problem \eqref{eq:linear5.1} by using a  fixed point argument.   This step is more or less  standard and will be done in Sections \ref{section44}-\ref{section6}.

\smallskip  \noindent $\bullet$
In order to finish the proof of Theorem \ref{theorem1},  in Section \ref{section77} the reduction process will be applied to force the vanishing of Lagrange multiplier $c$ in \eqref{eq:linear5.1}.
In other words, we need to adjust the parameter $\tilde d$ such that
 \[
{\rm Re}\int_{B_{\mathbb{R}_\e}(0)}   \Bigg( i\overline {w^\pm_{x_1}} (z) w^\pm (z)\Big[  \LLL^\pm_\e(\psi) +\rR^\pm+\mathcal N^\pm(\psi) \Big] (z+{\tilde d})   \Bigg)  =0.
\]
 The first two terms
  \[ {\rm Re}\int_{B_{\mathbb{R}_\e}(0)}   \Bigg( i\overline {w^\pm_{x_1}} (z) w^\pm (z)   \LLL^\pm_\e(\psi)  (z+{\tilde d})   \Bigg)\quad \text{and}\quad {\rm Re}\int_{B_{\mathbb{R}_\e}(0)}   \Bigg( i\overline {w^\pm_{x_1}} (z) w^\pm (z)    \rR^\pm (z+{\tilde d})   \Bigg) \]
   in the above identity  would be of  size $O(\e|\ln \e|)$, while the third term
  \[ {\rm Re}\int_{B_{\mathbb{R}_\e}(0)}   \Bigg( i\overline {w^\pm_{x_1}} (z) w^\pm (z)    \mathcal N^\pm(\psi) (z+{\tilde d})   \Bigg)  \]
   is of size $O(|\ln \e|^{-2})$.
Hence, much more delicate analysis will be provided to capture fine estimates.
 This will be bone by adopting the ideas  and  approaches  of Fourier decomposition to the error and  nonlinear terms introduced in \cite{DDMR, DDMR1} to  handle this problem, see Sections \ref{section3.2} and \ref{section4.2}.
In fact, a crucial  linear theory  which gives more precise estimates for the perturbation term $\psi$ will be established, see Proposition \ref{prop:sharp2b} in Section \ref{section4.2}.

\medskip
\noindent{\bf Notation and Definitions:}
We end this  introduction  by giving  some notation and definitions.
 For any complex-valued  vector  function $F = (F^+, F^-)\in \mathbb{C}^2$,  we define its conjugation
\[ \overline{F} = : (\overline{F^+},\, \overline{F^-}). \]
Next, we give the definition of screw-symmetry  in the coordinates $(r, \theta, t) $ with  $ (\tilde z, t) = (r e^{i \theta}, t)$.
\begin{definition}\label{screwsim}
We say that a function $u = (u^+, u^-):\R^3 \rightarrow \mathbb{C}^2$ has screw-symmetry  if
\begin{equation}\label{conditionsforscrew}
u(r,\theta+h,t+h)=u(r,\theta,t), \quad \text{i.e.,}\quad  u^\pm (r,\theta+h,t+h) = u^\pm(r,\theta,t),
\end{equation}
for any $h \in\R$.
\qed
\end{definition}

 \bigskip
\section{formulation of the problem} \label{section2}

 \subsection{Reduction to a 2-dimensional problem by using screw-symmetry}   \label{section2.1}

 \vspace{2mm}
 By using the  screw-symmetry, we will transform the problem  \eqref{original}, which is of $3$ dimensional in nature, to a $2$ dimensional  one.
 For simplicity, we only treat the case $k=2$ in Theorem \ref{theorem1} and the arguments  for general $k>2$ can be easily adapted.

\medskip
Notice that the condition \eqref{conditionsforscrew}  in Definition \ref{screwsim}  is equivalent to
$$
u(r,\theta,t+h)=u(r,\theta-h,t)\mbox{ for any }h\in \R.
$$
Then a  function with screw-symmetry  can be expressed as a function of two variables, i.e. for any \((r,\theta,t) \in \R^+\times\R\times\R\),
\[
u(r,\theta,t)=u(r,\theta-t,0)=:{\tilde u}(r,\theta-t).
\]
We will apply these facts to make a setting to reduce the dimension.

\medskip
Here is the observation in \cite{DDMR}.
We can write the standard degree pair $(+1, +1)$ vortex solution  for \eqref{2dgl}  in polar coordinates $(\ell, \theta)$, i.e.,
$$w(\tilde z)= \big(w^+(\tilde z), w^-(\tilde z)\big) =  \Big(W^+(r)e^{i\theta}, W^-(r)e^{i\theta}\Big),
$$
and consider a function $w_d$ in the form
\begin{align}\label{Approx1}
w_d(\tilde z,t)  &=  \Big(w_d^+(\tilde z,t), w_d^- (\tilde z,t)\Big),
\end{align} where
\begin{align}\label{Approx2}
 w_d^\pm (\tilde z,t)  =  ({t^\pm})^{-1}\prod_{j=1}^{2} w^\pm\left(\frac{{{\rm{Re}}(\tilde z)}-d \cos \big(t+(j-1)\pi\big) }{\epsilon},\,
 \frac{{{\rm{Im}}(\tilde z)}-d \sin\big(t+(j-1)\pi\big)}{\e}\right).
\end{align}
It can be checked that
\[
w_d(r,\theta,t+h)= \Big(e^{2ih}w^+_d(r,\theta-h,t), e^{ 2 i h} w^-_d(r,\theta-h,t)\Big)=e^{ 2 i h} w_d(r, \theta-h,t),
\]
for any \(h\) in \(\R\).
That is, $w_d$ does not have screw-symmetry  but $\tilde{w}_d(r,\theta,t):=e^{-2it}w_d(r,\theta,t)$ fulfills the symmetry.

\medskip
The above arguments suggest that we should look for a solution $w_\epsilon$ of \eqref{original} that can be written as
\begin{align}
w_\epsilon(r,\theta,t)= \Big(w_\epsilon^+(r,\theta,t), w_\epsilon^-(r,\theta,t) \Big) = \Big( e^{2 it} \tilde{w}^+(r,\theta,t),   e^{2it} \tilde{w}^-(r,\theta,t)\Big)
\label{transform1}
\end{align}
with $\tilde{w}^\pm$ being screw-symmetric.
Thus
\begin{align}
\tilde{w}^\pm(r,\theta,t)=  u^\pm(r,\theta-t),
\label{transform2}
\end{align}
where $u^\pm:\R^+\times \R \rightarrow \mathbb{C}$ is $2\pi$-periodic in the second variable.
Denoting
$$
u^\pm(r,s) =  u^\pm(r, \theta-t)\quad \mbox{with } s:= \theta-t,
$$
we can see that
$$
\p_{r}w_1^\pm = e^{2 it}\p_{r} u^\pm(r,s), \quad \p^2_{{r}{r}}w_1^\pm = e^{2 it} \p^2_{r r}  u^\pm(r,s),
\, \quad
\p_\theta w_1^\pm = e^{2 it}\p_s u^\pm(r,s), \quad \p^2_{\theta \theta} w_1^\pm = e^{2 it} \p^2_{ss} u^\pm(r,s),
$$

$$
\p_t w_1^\pm  = [2 i u^\pm -\p_s u^\pm]e^{2 it}, \ \ \  \p^2_{tt} w_1^\pm =[\p^2_{ss} u^\pm-4 i\p_s u^\pm-4 u^\pm ]e^{2 it}.
$$
Recalling the expression of the Laplacian operator in cylindrical coordinates as below
$$
\p^2_{r r}+\frac{1}{r}\p_{r}+\frac{1}{{r}^2}\p^2_{\theta \theta}+\p^2_{tt},
$$
we conclude that $w_\epsilon$ is a solution of \eqref{original} if and only if $u=\big(u^+, u^-\big)$ is a solution of the following equations
\begin{align}
 \e^2\Big( \p^2_{r r}u^\pm +\frac{1}{r}\p_{r}u^\pm  &+\frac{1}{{r}^2}\p^2_{ss}u^\pm  +\p^2_{ss}u^\pm -4 i\p_su^\pm- 4  u^\pm\Big)
\\[2mm]
 &  +\Big[ A_\pm \big({t}^\pm-|u^\pm|^2\big)+B\big({t}^\mp- |u^\mp|^2\big)\Big]u^\pm=0\quad \text{ in } \R^*_+\times \R.
\label{transform2-1}
\end{align}

\medskip
We  will   work in the rescaled coordinates, that is, we define
\begin{align}
v^\pm(r,s):=u^\pm(\e  r, s),
\label{transform3}
\end{align}
and search for a solution to the system of equations
\begin{align}\label{GL2-dimensionalrescaled}
\p^2_{rr}v^\pm +\frac{1}{r}\p_rv^\pm +\frac{1}{r^2}\p^2_{ss}v^\pm  &+\e^2(\p^2_{ss}v^\pm -4 i\p_sv^\pm - 4 v^\pm )  \nonumber
\\[2mm]
 & +\Big[ A_\pm \big({t}^\pm-|v^\pm|^2\big)+B\big({t}^\mp- |v^\mp|^2\big)\Big]v^\pm=0\quad\text{ in } \R^*_+\times \R.
\end{align}
From now on we will work in the plane $\R^2$ and use the notation $z=x_1+ix_2=re^{is}$. We denote by $\Delta$ the Laplacian operator in $2$-dimensional space, i.e.,
\begin{equation*}
\Delta=\p^2_{x_1x_1}+\p^2_{x_2x_2}=\p^2_{rr}+\frac{1}{r}\p_r+\frac{1}{r^2}\p^2_{ss}.
\end{equation*}

\medskip
 The approximation of a real solution to \eqref{GL2-dimensionalrescaled} can be chosen in the form
 \begin{align}  \label{eq:Approx2-dimensionalrescaled}
 v_d (z) = \big(v_d^+(z), v_d^-(z)\big)
 \end{align}
 with
 \begin{align}  \label{eq:Approx2-dimensionalrescaledpm}
 v_d^\pm = ({t^\pm})^{-1} \prod_{j=1}^{2}  w^\pm\Big(z-\tilde{d} e^{i (j-1)\pi }\Big),
 \end{align}
where   $ \tilde{d} =\frac{\hat d} {\e|\ln\e|^{\frac12} }    $ for some $\hat d =O(1)$ to be determined later and  $ w=(w^+, w^-) $
  is the standard degree $(+1, +1)$ vortex solution of  equation  \eqref{2dgl}.
   For convenience, we use the notation
\begin{align}
e_j= e^{i (j-1)\pi }, \quad j =1, 2.
\label{ej}
\end{align}

\begin{remark}
{\em
Starting from \eqref{eq:Approx2-dimensionalrescaled} and following the transformations in \eqref{transform1}, \eqref{transform2} and \eqref{transform3},
 we  claim that  by some  tedious but straightforward calculations,  we can get
  \begin{align*}
e^{2 it}   \prod_{j=1}^{2}  w^\pm\Big(\frac z \e- \tilde{d}   e^{i\frac{2(j-1)\pi} {2}} \Big) =  \prod_{j=1}^{2}  w^\pm\Big(\frac{\tilde z}\e- \tilde d  e^{it} e^{i\frac{2(j-1)\pi} {2}}\Big),
  \end{align*}
with
$$
z = re^{i(\theta-t)},\quad \tilde z =   r e^{i\theta}.
$$
Equivalently,  we  get  an approximation  which has $2$   vortex helices with degree pair $(+1, +1)$  to the system \eqref{original}   with the form
\begin{align}
\Bigg( ({t^+})^{-1} \prod_{j=1}^{2}  w^+\Big(\frac{\tilde z}\e- \tilde d  e^{it}  e^{i\frac{2(j-1)\pi} {2}}\Big),\   ({t^-})^{-1} \prod_{j=1}^{2}  w^-\Big(\frac{\tilde z}\e- \tilde d  e^{it}  e^{i\frac{2(j-1)\pi} {2}}\Big)\Bigg).
\label{profile1}
\end{align}
For the general integer $k >2$,  the corresponding  approximation to the solution of system \eqref{GL2-dimensionalrescaled}  would be
\begin{align}
\Bigg(  ( t^+)^{1-k}\prod_{j=1}^{k}  w^+\Big(z-\tilde{d} e^{i\frac{2(j-1)\pi} {k}} \Big),\   ( t^-)^{1-k} \prod_{j=1}^{k}  w^- \Big(z-\tilde{d}  e^{i\frac{2(j-1)\pi} {k}} \Big)\Bigg).
\label{profile2}
\end{align}
The asymptotic property \eqref{uniform convengence for system} of our solutions holds  from the construction and Lemma \ref{lemmaofW}.
The functions in \eqref{profile1} and \eqref{profile2} will provide the profile of the solution $w_\e$ in Theorem \ref{theorem1}.
\qed


}
\end{remark}

\medskip
\subsection{Additive-multiplicative perturbation}
Let
\[
S(v) :=  \Big( S^+(v), S^-(v)\Big),  \quad \text{for}~v=(v^+, v^-),
\]
with
\[S^\pm(v) =\Delta v^\pm
+\e^2(\p^2_{ss}v^\pm -4i\p_sv^\pm -4v^\pm) +\Big[ A_\pm \big({t}^\pm-|v^\pm|^2\big)+B\big({t}^\mp- |v^\mp|^2\big)\Big]v^\pm.   \]
The operator $S$ can be decomposed as
\begin{align*}
S  = S_0 + S_1\ ,
\end{align*}
where
\begin{align*}
S_0(v) =  \Big(S_0^+(v), S_0^-(v) \Big), \quad
S_1(v) =
 \Big(S_1^+(v), S_1^-(v) \Big),
\end{align*}
with
\begin{align*}
S_0^\pm(v)   = \Delta v^\pm
+\Big[ A_\pm \big({t}^\pm-|v^\pm|^2\big)+B\big({t}^\mp- |v^\mp|^2\big)\Big]v^\pm,
\end{align*}
\begin{align*}
  S_1^\pm(v)    &=  \e^2(\p^2_{ss}v^\pm -4i\p_sv^\pm -4v^\pm).
  \end{align*}
Therefore, the equation  \eqref{GL2-dimensionalrescaled} can be written as
\begin{align}
\label{mainEq}
S(v)=0  .
\end{align}

\medskip
We first try to search for a solution of \eqref{mainEq}  with the form
$$v= v_d + \phi, \quad \text{for some$~\phi = (\phi^+, \phi^-)$ small}. $$
We see that
\begin{align*}
S_0(v_d+ \phi ) &= S_0(v_d) + \LLL_0(\phi) + \NN_0(\phi),
\\[2mm]
S_1(v_d + \phi) &= S_1(v_d)  + S_1(\phi).
\end{align*}
In the above formulae, we have denoted the linear operator in the form
\begin{align*}
\LLL_0(\phi) &= \Big(\LLL_0^+, \LLL_0^-\Big) (\phi),
\end{align*}
where
\begin{align*}
\LLL_0^\pm  (\phi)   &=  \Delta \phi^\pm + \Big[ A_\pm \big({t^\pm}^2-|v_d^\pm|^2 \big)+B\big({t^\mp}^2-|v_d^\mp|^2\big)\Big]\phi^\pm  \nonumber
\\[2mm]
&\quad\,-\,2A_\pm {\rm{Re}}\Big(v_d^\pm\overline{\phi^\pm}\Big)v_d^\pm\,-\,2B {\rm{Re}}\Big(v_d^\mp \overline{\phi^\mp}\Big)v_d^\pm,
\end{align*}
and the nonlinear operator as the following
\begin{align*}
\NN_0(\phi) &= \Big(\NN_0^+, \NN_0^-\Big)(\phi),
\end{align*}
where
\begin{align*}
\NN_0^\pm(\phi) & = -2A_\pm {\rm{Re}} (v_d^\pm \overline{\phi^\mp}) \phi^\pm - A_\pm  |\phi^\pm|^2 (v_d^\pm+\phi^\pm)  \nonumber
  \\[2mm]
  & \quad  \quad -\,2B {\rm{Re}}\Big(v_d^\mp \overline{\phi^\mp} \Big) \phi^\pm - B  |\phi^\mp|^2 (v_d^\pm+\phi^\pm).
\end{align*}
We note that, due to the strong  coupling effect in the following terms
\[ -\,2A_\pm {\rm{Re}}\Big(v_d^\pm\overline{\phi^\pm}\Big)v_d^\pm\,-\,2B {\rm{Re}}\Big(v_d^\mp \overline{\phi^\mp}\Big)v_d^\pm,  \]
it is very complicated to handle the linear operator $\LLL_0(\phi)$.

\medskip
So, we follow the idea in the  work of  \cite{delPinoKowalczykMusso2006}
 to look for a solution of \eqref{mainEq} in the following form
\begin{align}
\label{eq1}
v=(v^+, v^-),
\qquad
v^\pm= \eta v_d^\pm (1+i\psi^\pm)  + (1-\eta) v_d^\pm e^{i\psi^\pm},
\end{align}
where
$v_d=(v_d^+, v_d^-)$  is  the ansatz \eqref{eq:Approx2-dimensionalrescaled}
 and $\psi= (\psi^+, \psi^-) = (\psi^+_1+ i\psi^+_2, \psi^-_1+ i\psi^-_2) $ is the unknown perturbation.
The cut-off function $\eta$ is defined as
\begin{align*}
\eta(z) = \eta_1(|z-\tilde d|) + \eta_1(|z+\tilde d|) , \quad z\in \mathbb C\cong\R^ 2\ ,
\end{align*}
and  $\eta_1:\R \rightarrow [0,1]$ is a smooth cut-off function such that
\begin{align}
\label{eta1}
 \eta_1(t)=1 \text{ for } t\leq 1\text{ and }\eta_1(t)=0 \text{ for } t\geq 2.
\end{align}
  Our  goal  is to rewrite \eqref{mainEq} in the form
\begin{align} \label{equation-for-psi}
\mathcal L_\e\psi +\rR+ \mathcal N(\psi)=0
\end{align}
by identifying the linear operator $\mathcal L_\e = \Big(\mathcal L_\e^+, \mathcal L_\e^-\Big)$,
the error term $\rR=\Big(\rR^+, \rR^-\Big)$
and the nonlinear term  $\mathcal N(\psi) = \Big(\mathcal N^+(\psi), \mathcal N^-(\psi) \Big) $.
This will be done in the sequel and the result is given in \eqref{result}.

\medskip
Rewrite  $v$ in \eqref{eq1} as
\begin{align*}
v &= v_d + \phi  = \big(v_d^+, v_d^-\big) + \big(\phi^+, \phi^-\big)
\end{align*}
with
\begin{align*}
\phi^\pm  &= i v_d^\pm \psi^\pm + \gamma(\psi^\pm), \quad
 \gamma(\psi^\pm) = (1-\eta) v_d^\pm (e^{i\psi^\pm}-1-i\psi^\pm).
\end{align*}
Then
\begin{align*}
S_0(v) &= S_0(v_d) + \LLL_0\Big( \big(i v_d^+ \psi^+, i v_d^- \psi^-\big) \Big) + \LLL_0\Big( \big( \gamma(\psi^+), \gamma(\psi^-)   \big)\Big)
\\[2mm]
 & \quad
+ \NN_0\Big(  \big(i v_d^+ \psi^+, i v_d^- \psi^-\big)  + \big( \gamma(\psi^+), \gamma(\psi^-)   \big)\Big),
\\[3mm]
S_1(v) &= S_1(V_d) + S_1 \Big(  \big(i v_d^+ \psi^+, i v_d^- \psi^-\big) \Big) + S_1\Big( \big( \gamma(\psi^+), \gamma(\psi^-)   \big)\Big).
\end{align*}
Explicit  expressions of  some operators in the above will be derived one by one.

\smallskip
\noindent $\bullet$
By some computations, we start from $S_0(v)$ and consider
\begin{align*}
 \LLL_0\Big( \big(i v_d^+ \psi^+, i v_d^- \psi^-\big) \Big)  =  \Big(\LLL_0^+, \LLL_0^-\Big)  \Big( \big(i v_d^+ \psi^+, i v_d^- \psi^-\big) \Big)
 \end{align*}
where
 \begin{align*}
 & \LLL_0^\pm\Big( \big(i v_d^+ \psi^+, i v_d^- \psi^-\big) \Big)
  \nonumber
\\[2mm]
 &=\Delta  (i v_d^\pm \psi^\pm)  + \Big[ A_\pm \big({t^\pm}^2-|v_d^\pm|^2 \big)+B\big({t^\mp}^2-|v_d^\mp|^2\big)\Big]  (i v_d^\pm \psi^\pm)   \nonumber
\\[2mm]
& \qquad \qquad \,-\,2A_\pm {\rm{Re}}\Big(v_d^\pm\overline{  (i v_d^\pm \psi^\pm)}\Big)v_d^\pm\,-\,2B {\rm{Re}}\Big(v_d^\mp \overline{ (i v_d^\mp \psi^\mp) }\Big)v_d^\pm
\\[2mm]
 &= i  v_d^\pm \Bigg(  \frac{\Delta v_d^\pm}{v_d^\pm}     + \Big[ A_\pm \big({t^\pm}^2-|v_d^\pm|^2 \big)+B\big({t^\mp}^2-|v_d^\mp|^2\big)\Big]    \Bigg)    \psi^\pm
 \\[2mm]
 &  \quad  +i  v_d^\pm   \Bigg( \Delta \psi^\pm +\frac{\nabla v_d^\pm \nabla \psi^\pm } {v_d^\pm}  - 2i A_\pm |v_d^\pm|^2  {\rm{Im}}(\psi^\pm) -2 i B   |v_d^\mp|^2 \rm{Im}(\psi^\mp)  \Bigg)
  \\[2mm]
 &= i  v_d^\pm \Bigg(  \frac{\Delta v_d^\pm}{v_d^\pm}     + \Big[ A_\pm \big({t^\pm}^2-|v_d^\pm|^2 \big)+B\big({t^\mp}^2-|v_d^\mp|^2\big)\Big]    \Bigg)    \psi^\pm
 \\[2mm]
  &  \quad +i  v_d^\pm   \Bigg( \Delta \psi^\pm +\frac{\nabla v_d^\pm \nabla \psi^\pm } {v_d^\pm}  - 2i A_\pm |v_d^\pm|^2 {\rm{Im}}(\psi^\pm) -2 i B   |v_d^\mp|^2{ \rm{Im}}(\psi^\mp)  \Bigg)
    \\[2mm]
 &= i  v_d^\pm   \Bigg(   \frac{S_0^\pm(v_d)} {v_d^\pm}  \psi^\pm +  \Delta \psi^\pm +      \overbrace{ \textcolor{red}{\frac{\nabla v_d^\pm \nabla \psi^\pm } {v_d^\pm}  }}^{ \text{weak effect} }  - 2i A_\pm |v_d^\pm|^2 {\rm{Im}}(\psi^\pm) -2 i B   |v_d^\mp|^2 \rm{Im}(\psi^\mp)  \Bigg)
  \\[2mm]
 &:= i  v_d^\pm   \Bigg(   \frac{S_0^\pm (v_d)} {v_d^\pm}  \psi^\pm +  \tilde L^\pm_0(\psi )   \Bigg),
  \end{align*}
  with
  \begin{align}   \label{tilde L0pm}
   \tilde L^\pm_0(\psi )  = \Delta \psi^\pm +      \overbrace{ \textcolor{red}{\frac{\nabla v_d^\pm \nabla \psi^\pm } {v_d^\pm}  }}^{ \text{weak effect} }  - 2i A_\pm |v_d^\pm|^2 {\rm{Im}}(\psi^\pm) -2 i B   |v_d^\mp|^2 \rm{Im}(\psi^\mp).
  \end{align}
   Thus we have
  \begin{align*}
S^\pm_0(v) &= S^\pm_0(v_d) + \LLL^\pm_0\Big( \big(i v_d^+ \psi^+, i v_d^- \psi^-\big) \Big) + \LLL^\pm_0\Big( \big( \gamma(\psi^+), \gamma(\psi^-)   \big)\Big)
\\[2mm]
 & \quad
+ \NN^\pm_0\Big(  \big(i v_d^+ \psi^+, i v_d^- \psi^-\big)  + \big( \gamma(\psi^+), \gamma(\psi^-)   \big)\Big)
\\[2mm]
 &  = i  v_d^\pm   \Bigg(  -i \frac{S^\pm_0(v_d) } { v_d^\pm }   + \frac{S_0^\pm (v_d)} {v_d^\pm}  \psi^\pm +  \tilde L^\pm_0(\psi )     - i \frac{1}{v_d^\pm } \LLL^\pm_0\Big( \big( \gamma(\psi^+), \gamma(\psi^-)   \big)\Big)
\\[2mm]
 & \quad
    - i \frac{1}{v_d^\pm }  \NN^\pm_0\Big(  \big(i v_d^+ \psi^+, i v_d^- \psi^-\big)  + \big( \gamma(\psi^+), \gamma(\psi^-)   \big)\Big)  \Bigg).
\end{align*}
Specially,  when far away from the vortices,   $v= \big(v_d^+ e^{i\psi^+}, v_d^-e^{i\psi^-}\big)$. This implies that
 \begin{align}
  S_0(v) =  \Bigg(S_0^+\Big(\big(v_d^+ e^{i\psi^+}, v_d^-e^{i\psi^-}\big) \Big), S_0^-\Big(\big(v_d^+ e^{i\psi^+}, v_d^-e^{i\psi^-}\big)\Big) \Bigg),
\label{S0(v)far}
 \end{align}
and
 \begin{align*}
   &S_0^\pm\Big(\big(v_d^+ e^{i\psi^+}, v_d^-e^{i\psi^-}\big) \Big)    \nonumber
 \\[2mm]
 & =   \Delta (v_d^\pm e^{i\psi^\pm})
+\Big[ A_\pm \big({t^\pm}^2-| (v_d^\pm e^{i\psi^\pm}) |^2\big)+B\big({t^\mp}^2-| (v_d^\mp e^{i\psi^\mp}) |^2\big)\Big]  (v_d^\pm e^{i\psi^\pm})
 \nonumber
 \\[2mm]
 & =  i v_d^\pm e^{i\psi^\pm} \Big( - i \frac{S_0^\pm({v_d})}{{v_d^\pm}} +\tilde L^\pm_0(\psi )    +  \tilde N^\pm_0(\psi ) \Big),
 \end{align*}
 where
 \[ \tilde N^\pm_0(\psi )  = i (\nabla \psi^\pm)^2 + i A_\pm|v_d^\pm  |^2 (e^{-2\psi^\pm_2} - 1+ 2\psi^\pm_2)  + i B|v_d^\mp  |^2 (e^{-2\psi^\mp_2} - 1+ 2\psi^\mp_2).    \]


\medskip
\noindent $\bullet$
We consider
 \begin{align*}
  S_1 \Big(  \big(i v_d^+ \psi^+, i v_d^- \psi^-\big) \Big)
 = \Bigg(S_1^+\Big(  \big(i v_d^+ \psi^+, i v_d^- \psi^-\big) \Big) , S_1^-\Big(  \big(i v_d^+ \psi^+, i v_d^- \psi^-\big) \Big)  \Bigg),
  \end{align*}
 and it follows that
    \begin{align*}
   S_1^\pm \Big(  \big(i v_d^+ \psi^+, i v_d^- \psi^-\big) \Big) =   iv _d^\pm
\left(
\frac{S^\pm_1(v_d)}{v _d^\pm }\psi^\pm
+\tilde L_1^\pm (\psi )
\right)
    \end{align*}
    with
    \begin{align}\label{tilde L1pm}
\tilde L_1^\pm  (\psi) = \e^2 \Bigl( \partial_{ss}^2 \psi^\pm
+\frac{2\partial_s v_d^\pm  }{v_d^\pm }\partial_s \psi^\pm
- 4i \partial_s \psi^\pm   \Bigr) .
\end{align}
Specially, we have that, when far away from the vortices
\begin{align}
S_1(v) = \Bigg(S_1^+\Big(\big(v_d^+ e^{i\psi^+}, v_d^-e^{i\psi^-}\big) \Big), S_1^-\Big(\big(v_d^+ e^{i\psi^+}, v_d^-e^{i\psi^-}\big)\Big) \Bigg),
\label{S1(v)far}
\end{align}
with
 \begin{align*}
 S_1^\pm\Big(\big(v_d^+ e^{i\psi^+}, v_d^-e^{i\psi^-}\big) \Big) =
  i v^\pm_d e^{i\psi^\pm}
\left[
-i\frac{S_1^\pm(v_d)}{v^\pm_d}
+ \tilde L_1^\pm (\psi)
+\e^2 i (\partial_s\psi^\pm)^2
\right] .
 \end{align*}

\medskip
In order to draw a conclusion, a new cut-off function will be introduced as the following
\[
\tilde \eta(z ) = \eta_1(|z-\tilde d|-1) + \eta_1(|z+\tilde d|-1)
\]
with $\eta_1$ defined in  \eqref{eta1}.
Then the problem  \eqref{mainEq} can be rewritten as
\begin{align*}
0 & =\tilde\eta   i {v_d^\pm}
\Bigg[
-i\frac{S_0^\pm({v_d})}{{v_d^\pm}}
+ \tilde L_0^\pm (\psi)
+ \frac{S_0^\pm({v_d})}{{v_d^\pm}}\psi^\pm  -i \frac{S_1^\pm({v_d})}{{v_d^\pm}}
+ \tilde L_1^\pm (\psi)
+ \frac{S_1^\pm({v_d})}{{v_d^\pm}}\psi^\pm
\\[2mm]
& \qquad  - \frac{i}{{v_d^\pm}}  \LLL_0^\pm\Big( \big( \gamma(\psi^+), \gamma(\psi^-)   \big)\Big)
- \frac{i}{{v_d^\pm}}\NN_0^\pm \Big(  \big(i v_d^+ \psi^+, i v_d^- \psi^-\big)  + \big( \gamma(\psi^+), \gamma(\psi^-)   \big)\Big)
\\[2mm]
&  \qquad
- \frac{i}{{v_d^\pm}} S_1^\pm\Big( \big( \gamma(\psi^+), \gamma(\psi^-)   \big)\Big)
\Bigg]
\\[2mm]
& \quad
+ (1-\tilde\eta)
i {v_d^\pm} e^{i \psi^\pm}
\left[
- i \frac{S_0^\pm({v_d})}{{v_d^\pm}}
+ \tilde L_0^\pm (\psi)
+ \tilde N_0^\pm(\psi)
- i \frac{S_1^\pm({v_d^\pm})}{{v_d^\pm}}
 + \tilde L_1 (\psi^\pm) + \e^2 i (\partial_s \psi^\pm)^2
\right].
 \end{align*}
 By the definitions of $\mathcal{L}, \mathcal{R}$ and $\mathcal{N}$, we can have a simpler representation of the above equation for \eqref{mainEq}:
\begin{align}
\mathcal L_\e\psi +\rR+ \mathcal N(\psi)=0, \quad i.e.  \quad \mathcal L_\e^\pm\psi +\rR^\pm+ \mathcal N^\pm(\psi)=0,
\label{result}
\end{align}
 where
\begin{align}
\nonumber
\mathcal L_\e^\pm (\psi)
& :=
 (\tilde L_0^\pm + \tilde L_1^\pm)  (\psi )
+ \tilde\eta \frac{S^\pm(v_d)}{v_d^\pm}\psi^\pm ,
\qquad\quad
\rR^\pm := -i \frac{S(v_d^\pm)}{v_d^\pm} ,
\nonumber
\\[2mm]
\nonumber
\mathcal N^\pm(\psi) &:=
\tilde\eta\Bigl( \frac{1}{\tilde \eta + (1-\tilde\eta) e^{i\psi^\pm} }-1\Bigr) \frac{S^\pm(v_d)}{v_d^\pm}\psi^\pm
\\[2mm]
\nonumber
& \qquad
- \frac{i}{v_d^\pm} \frac{\tilde\eta}{\tilde\eta + (1-\tilde\eta) e^{i\psi^\pm} } \Bigg[  \LLL_0^\pm\Big( \big( \gamma(\psi^+), \gamma(\psi^-)   \big)\Big)    + S_1^\pm\Big( \big( \gamma(\psi^+), \gamma(\psi^-)   \big)\Big)
 \\[2mm]
\nonumber
& \qquad\qquad +  \NN_0^\pm \Big(  \big(i v_d^+ \psi^+, i v_d^- \psi^-\big)    + \big( \gamma(\psi^+), \gamma(\psi^-)   \big)\Big)\Bigg]
  \\[2mm]
\nonumber
& \qquad + \frac{(1-\tilde\eta)e^{i\psi^\pm} }{ \tilde\eta + (1-\tilde\eta) e^{i\psi^\pm} } \Bigl[\tilde N_0^\pm(\psi)
+\e^2 i (\partial_s\psi^\pm)^2 \Bigr].
\end{align}
From  \eqref{tilde L0pm} and \eqref{tilde L1pm},  we know that
\begin{align*}
\mathcal L_\e^\pm (\psi)   &=  \Delta \psi^\pm +      {\frac{\nabla v_d^\pm \nabla \psi^\pm } {v_d^\pm}  }   - 2i A_\pm |v_d^\pm|^2 {\rm{Im}}(\psi^\pm) -2 i B   |v_d^\mp|^2 \rm{Im}(\psi^\mp)
  \\[2mm]
\nonumber
& \quad
 + \e^2 \Bigl( \partial_{ss}^2 \psi^\pm
+\frac{2\partial_s v_d^\pm  }{v_d^\pm }\partial_s \psi^\pm
- 4i \partial_s \psi^\pm   \Bigr) + \tilde\eta \frac{S^\pm(v_d)}{v_d^\pm}\psi^\pm.
\end{align*}
  Note that when
  $|z\pm \tilde d|\geq 3$ the nonlinear terms take the form
\begin{align*}
\mathcal{N}^\pm(\psi) &=
\tilde N_0^\pm(\psi)
+\e^2 i (\partial_s\psi^\pm)^2
\\[2mm]
&=
  i (\nabla \psi^\pm)^2 + i A_\pm|v_d^\pm  |^2 (e^{-2\psi^\pm_2} - 1+ 2\psi^\pm_2)  + i B|v_d^\mp  |^2 (e^{-2\psi^\mp_2} - 1+ 2\psi^\mp_2)
 +\e^2 i (\partial_s\psi^\pm)^2.
\end{align*}

\medskip
\subsection{Another form of the equation near each vortex}  \label{section3.3}
Note that the expressions of the terms $S_0(v)$ and $S_1(v)$ in the outer region (far from the vortices) are given in \eqref{S0(v)far} and \eqref{S1(v)far}.
In this part, we will write the  operator $\mathcal L_\e\psi $   in another form  when it is near the vortices so that we can  know it better.

In order to do so, we first introduce some notation.
 We denote
$$
{z}_j = z- \tilde{d} e^{i\frac{2(j-1)\pi} {2}}\qquad  \mbox{for }j=1, 2,
$$
and  then let
 \[
 \Phi_j(z) =    \big( \Phi_j^+(z),  \Phi_j^-(z) \big), \qquad \Phi_j^\pm(z) =i w^\pm({z}_j)  \psi^\pm(z).
 \]
 Furthermore,  when near the vortex  center,  we also assume
\begin{align*}
  & \Phi(z) =  \big( \Phi^+(z),  \Phi^-(z) \big), \qquad \Phi^\pm(z) =i  v_d^\pm(z) \psi^\pm(z),
 \\[1mm]
&  \Omega_j(z)= \big(  \Omega^+_j(z),   \Omega^-_j(z) \big),  \qquad \Omega_j^\pm(z) =  \frac{ v_d^\pm(z)} {w^\pm({z}_j) },
 \\[1mm]
&   \mathbb{E}:= S(v_d), \quad \qquad\mathbb{E}^\pm= S^\pm(v_d).
 \end{align*}
By some calculations, the following identity holds
 \begin{align*}
 iw^\pm(z_j)\mathcal L_\e^\pm \psi (z)   &=  \frac{L^\pm_d(\Phi)} {\Omega_j^\pm}(z)  + (\eta_1-1)\frac{\mathbb{E}^\pm} {v_d^\pm}\Phi_j^\pm(z)    \\[2mm]
 \nonumber
&  =  \frac{L^\pm_d \Big( \big(\Phi^+_j \Omega^+_j,\ \Phi^-_j \Omega^-_j\big)\Big)} {\Omega_j^\pm}(z)  + (\eta_1-1)\frac{\mathbb{E}^\pm} {v_d^\pm}\Phi_j^\pm (z)\ ,
 \end{align*}
 where
\begin{align*}
 L^\pm_d(\Phi)  & =   \Delta \Phi^\pm + \Big[ A_\pm \big({t^\pm}^2-|v_d^\pm|^2 \big)+B\big({t^\mp}^2-|v_d^\mp|^2\big)\Big]\Phi^\pm  \nonumber
\\[2mm]
&\quad \,-\,2A_\pm {\rm{Re}}\Big(v_d^\pm\overline{\Phi^\pm}\Big)v_d^\pm\,-\,2B {\rm{Re}}\Big(v_d^\mp \overline{\Phi^\mp}\Big)v_d^\pm
 +
\e^2(\p^2_{ss}\Phi^\pm -4i\p_s\Phi^\pm -4\Phi^\pm).
\end{align*}
Recall that
 \[\Phi = \big(iv_d^+ \psi^+, i v_d^- \psi^-\big),  \]
and define
 \begin{align}\label{ldj}
 L_{d,j}(\Phi) = \Big(L^+_{d,j}, L^-_{d,j}\Big) (\Phi), \quad \text{with}\quad  L^\pm_{d,j}(\Phi) :=  iw^\pm(z_j)\mathcal L_\e^\pm (\psi).
 \end{align}

\medskip
 In order to describe the properties of ${\mathcal L}_\e^\pm (\psi)$ near the vortex, we  give the following lemma.
 \begin{lemma}  \label{Lepsilon}
 When near the vortices, we can see that the linear operator $L_{d,j}(\Phi) = \Big(L^+_{d,j}, L^-_{d,j}\Big) (\Phi) $ defined in \eqref{ldj}  is a small perturbation of $L^0(\Phi)$, where $ L^0$ is the linearized Ginzburg-Landau operator around $\Big( w^+(z_j),  w^-(z_j)\Big)$, i.e,
 \begin{equation}\nonumber
L^0(\phi):=  \big({L^0_+}(\phi),  {L^0_-}(\phi)\big) \quad\mbox{with } \phi=(\phi^+, \phi^-),
\end{equation}
and
\begin{align}
{L^0_\pm}  (\phi)  &=   \Delta \phi^\pm \,+\,\Big[\,A_\pm\big({t}^\pm-|{W^\pm(|z_j|)}|^2\Big)+B\Big({t^\mp}^2- |{W^\mp(|z_j|) }|^2 \big)\,\Big]\phi^\pm\nonumber
\\[2mm]
&\quad \,-\,2A_\pm {\rm{Re}}\Big(w^\pm\overline{\phi^\pm}\Big)w^\pm (z_j)\,-\,2B {\rm{Re}}\Big(w^\mp \overline{\phi^\mp}\Big)w^\pm (z_j).
\end{align}

 \end{lemma}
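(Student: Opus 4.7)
\textbf{Proof proposal for Lemma \ref{Lepsilon}.}
The plan is to localize $L_{d,j}(\Phi)$ around the vortex $\tilde d e_j$, compare it term by term with $L^0$, and isolate three source of perturbation: (i) the influence of the remaining $k-1$ distant vortices on $v_d^\pm$, (ii) the screw-symmetric $\epsilon^2$-terms inherited from $\tilde L_1^\pm$, and (iii) the cut-off/error contribution $(\eta_1-1)\mathbb{E}^\pm\Phi_j^\pm/v_d^\pm$. Concretely, fix $j$, set $z_j=z-\tilde d e_j$, and write
\[
v_d^\pm(z)=\Omega_j^\pm(z)\,w^\pm(z_j),\qquad
\Omega_j^\pm(z)=(t^\pm)^{1-k}\!\!\prod_{i\ne j}w^\pm\!\big(z-\tilde d e_i\big).
\]
For $|z_j|$ bounded and $\tilde d=\hat d/(\epsilon|\ln\epsilon|^{1/2})\to\infty$, each factor with $i\ne j$ satisfies $|z-\tilde d e_i|\gtrsim\tilde d$, so Lemma \ref{lemmaofW} (in particular the expansion $W^\pm(\ell)=t^\pm-c_\pm/(2\ell^2)+O(\ell^{-4})$ and ${W^\pm}'(\ell)=O(\ell^{-3})$) yields a constant $c_j^\pm$ of unit modulus such that
\[
\Omega_j^\pm(z)=c_j^\pm+O(\tilde d^{-1}),\qquad
\nabla\Omega_j^\pm(z)=O(\tilde d^{-2}),\qquad
\Delta\Omega_j^\pm(z)=O(\tilde d^{-3}),
\]
uniformly on $\{|z_j|\le R\}$ for any fixed $R$. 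In particular $\big||v_d^\pm(z)|^2-W^\pm(|z_j|)^2\big|=O(\tilde d^{-1})$.

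Next, I would substitute $\Phi^\pm=\Omega_j^\pm\Phi_j^\pm$ into the expression for $L_d^\pm(\Phi)$ and expand by the Leibniz rule:
\[
\Delta(\Omega_j^\pm\Phi_j^\pm)=\Omega_j^\pm\Delta\Phi_j^\pm+2\nabla\Omega_j^\pm\cdot\nabla\Phi_j^\pm+\Phi_j^\pm\Delta\Omega_j^\pm.
\]
The zeroth-order potential coefficients in $L_d^\pm$ differ from those of $L^0_\pm$ only through the replacements $|v_d^\pm|^2\leftrightarrow W^\pm(|z_j|)^2$ and $v_d^\pm\leftrightarrow w^\pm(z_j)$, each a correction of size $O(\tilde d^{-1})$ on bounded sets. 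The cross-coupling terms $2A_\pm\mathrm{Re}(v_d^\pm\overline{\Phi^\pm})v_d^\pm$ and $2B\mathrm{Re}(v_d^\mp\overline{\Phi^\mp})v_d^\pm$ factor as $|\Omega_j^\pm|^2$ times their counterparts in $L^0_\pm(\Phi_j)$, producing a further $O(\tilde d^{-1})$ perturbation after dividing by $\Omega_j^\pm$ (which is bounded below). The $\epsilon^2$ part $\epsilon^2(\partial_{ss}^2-4i\partial_s-4)\Phi^\pm$ from $\tilde L_1^\pm$ gives, after the same substitution, a contribution of order $\epsilon^2\big(|\partial_{ss}^2\Phi_j^\pm|+|\partial_s\Phi_j^\pm|+|\Phi_j^\pm|\big)$ plus lower-order pieces involving derivatives of $\Omega_j^\pm$.

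Combining, near each vortex one obtains
\[
L_{d,j}^\pm(\Phi)=L^0_\pm(\Phi_j)+\mathcal{P}_{d,\epsilon}^\pm(\Phi_j)+(\eta_1-1)\frac{\mathbb{E}^\pm}{v_d^\pm}\Phi_j^\pm,
\]
where $\mathcal{P}_{d,\epsilon}^\pm$ collects the gradient coupling $\frac{2\nabla\Omega_j^\pm}{\Omega_j^\pm}\!\cdot\!\nabla\Phi_j^\pm+\frac{\Delta\Omega_j^\pm}{\Omega_j^\pm}\Phi_j^\pm$, the $O(\tilde d^{-1})$ mismatch of the potential and cross-coupling coefficients, and the screw-symmetric $\epsilon^2$-terms. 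All three sources are quantitatively small: the first two by the $\Omega_j^\pm$-estimates above, and the third by the explicit $\epsilon^2$ factor; the $(\eta_1-1)\mathbb{E}^\pm/v_d^\pm$ piece is supported away from the vortex center and is controlled by the approximation error estimate for $v_d$ to be established in Section \ref{section Error estimate of Approximation}.

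The chief technical obstacle is the gradient coupling term $\nabla\Omega_j^\pm\cdot\nabla\Phi_j^\pm/\Omega_j^\pm$, flagged already in red as a ``weak effect'' in \eqref{tilde L0pm}: it is of first order in $\Phi_j^\pm$ and only gains smallness from $\nabla\Omega_j^\pm=O(\tilde d^{-2})$, so its contribution must be weighed against the decay of $\nabla\Phi_j^\pm$ in the eventual weighted spaces. Together with the need to track the $\epsilon^2\partial_{ss}^2$ term uniformly in the angular variable, this is what makes the ``small perturbation'' claim quantitatively delicate; the precise bounds are postponed to the error analysis and the linear theory of Proposition \ref{prop:sharp2b}, where the weighted norms are set up so that each of the three perturbation pieces above is genuinely negligible compared with the invertibility scale of $L^0$ provided by Lemma \ref{nodegeneracy-theorem1.1} and Lemmas \ref{lem:ellipticestimatesL0}--\ref{lem:ellipticestimatesL0-b}.
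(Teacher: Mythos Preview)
Your overall strategy---localize, write $v_d^\pm=\Omega_j^\pm w^\pm(z_j)$, expand by Leibniz, and bound the mismatches---is the same as the paper's. But your decay rates for $\Omega_j^\pm$ are off by one power of $\tilde d$ because you have only tracked the radial profile $W^\pm$ and forgotten the phase. Each distant factor is $w^\pm(z-\tilde d e_i)=W^\pm(\ell_i)e^{i\theta_i}$, and while $W^\pm{}'(\ell_i)=O(\ell_i^{-3})$, the phase contributes $\nabla\theta_i=O(\ell_i^{-1})$. Hence on $\{|z_j|\le R\}$ the correct orders are
\[
\Omega_j^\pm=c_j^\pm+O(\tilde d^{-1}),\qquad \nabla\Omega_j^\pm=O(\tilde d^{-1}),\qquad \Delta\Omega_j^\pm=O(\tilde d^{-2}),
\]
i.e.\ $|\Omega_j^\pm|=1+O_\e(\e^2|\ln\e|)$, $\nabla\Omega_j^\pm=O_\e(\e\sqrt{|\ln\e|})$, $\Delta\Omega_j^\pm=O_\e(\e^2|\ln\e|)$, which are exactly the bounds the paper records at the end of its proof. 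Your claim $\nabla\Omega_j^\pm=O(\tilde d^{-2})$ is therefore wrong, and since you single out the gradient coupling $\frac{2\nabla\Omega_j^\pm}{\Omega_j^\pm}\cdot\nabla\Phi_j^\pm$ as the ``chief technical obstacle'' and rely on that extra decay, this needs to be corrected. The lemma itself survives, because $O(\tilde d^{-1})=O(\e\sqrt{|\ln\e|})$ is still small; but the margin is tighter than you state.

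One further difference: the paper does not simply bound $\Delta\Omega_j^\pm$ directly. It groups the combination $\Delta\Omega_j^\pm+\e^2\partial_{ss}^2\Omega_j^\pm-4i\e^2\partial_s\Omega_j^\pm$ appearing in $\mathfrak A_3$ and rewrites it using the identity $\mathbb E^\pm=S^\pm(v_d)$; this produces the explicit closed formula \eqref{Lpmdj}, in which the $(\eta_1-1)\mathbb E^\pm/v_d^\pm$ term and the $\Delta\Omega_j^\pm$ term partially recombine into $\eta_1\,\mathbb E^\pm/v_d^\pm$. That explicit remainder is what is quoted verbatim as $\mathscr F_{1,\pm}$ in Section~\ref{section77} when computing the reduced equation for $\hat d$. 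Your route proves the qualitative smallness claimed in the lemma, but it does not deliver that formula, which the paper needs downstream.
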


 \begin{proof} From  the definition, we  know that
 \begin{align*}
  L^\pm_{d,j}(\Phi)
&=   \frac{L^\pm_d \Big( \big(\Phi^+_j \Omega^+_j, \Phi^-_j \Omega^-_j\big)\Big)} {\Omega_j^\pm}  + (\eta_1-1)\frac{\mathbb{E}^\pm} {v_d^\pm}\Phi_j^\pm
  \nonumber
\\[2mm]
& =  \frac1{\Omega_j^\pm}  \Bigg\{ \Delta  (\Phi^\pm_j \Omega^\pm_j) + \Big[ A_\pm \big({t^\pm}^2-|v_d^\pm|^2 \big)+B\big({t^\mp}^2-|v_d^\mp|^2\big)\Big] (\Phi^\pm_j \Omega^\pm_j)
  \nonumber
\\[2mm]
&\qquad\quad \,-\,2A_\pm {\rm{Re}}\Big(v_d^\pm\overline{ (\Phi^\pm_j \Omega^\pm_j) }\Big)v_d^\pm\,-\,2B {\rm{Re}}\Big(v_d^\mp \overline{ (\Phi^\mp_j \Omega^\mp_j) }\Big)v_d^\pm
  \nonumber
\\[2mm]
&\qquad\quad
+
\e^2\Big[\p^2_{ss} (\Phi^\pm_j \Omega^\pm_j)-4i\p_s (\Phi^\pm_j \Omega^\pm_j) -4 (\Phi^\pm_j \Omega^\pm_j) \Big] \Bigg\}
\ +\
(\eta_1-1)\frac{\mathbb{E}^\pm} {v_d^\pm}\Phi_j^\pm
  \nonumber
\\[2mm]
&=  \frac1{\Omega_j^\pm}    \Big(  \Phi^\pm_j \Delta \Omega^\pm_j + 2 \nabla \Phi^\pm_j \nabla  \Omega^\pm_j +  \Omega^\pm_j \Delta  \Phi^\pm_j \Big) + (\eta_1-1)\frac{\mathbb{E}^\pm} {v_d^\pm}\Phi_j^\pm
  \nonumber
\\[2mm]  & \quad  +
 \Big[ A_\pm \big({t^\pm}^2-|v_d^\pm|^2 \big)+B\big({t^\mp}^2-|v_d^\mp|^2\big)\Big]\Phi^\pm_j
  \nonumber
\\[2mm]  & \quad   +   \frac1{\Omega_j^\pm}  \Big[\,-\,2A_\pm {\rm{Re}}\Big(v_d^\pm\overline{ (\Phi^\pm_j \Omega^\pm_j) }\Big)v_d^\pm\,-\,2B {\rm{Re}}\Big(v_d^\mp \overline{ (\Phi^\mp_j \Omega^\mp_j) }\Big)v_d^\pm       \Big]
  \nonumber
\\[2mm]  & \quad   +   \e^2 \frac1{\Omega_j^\pm}   \Big[ \Omega^\pm_j \p^2_{ss}  \Phi^\pm_j +\Phi^\pm_j\p^2_{ss}  \Omega^\pm_j + 2\p_s\Omega^\pm_j\p_s \Phi^\pm_j -4i (\p_s \Omega^\pm_j \Phi^\pm_j+ \p_s \Phi^\pm_j\Omega^\pm_j )-4 (\Phi^\pm_j \Omega^\pm_j) \Big]
   \nonumber
\\[2mm]  & : = \A_1+ \A_2+\A_3+ (\eta_1-1)\frac{\mathbb{E}^\pm} {v_d^\pm}\Phi_j^\pm,
  \end{align*}
  with
  \begin{align*}
   \A_1 & =\Delta  \Phi^\pm_j  +
 \Big[ A_\pm \big({t^\pm}^2-|v_d^\pm|^2 \big)+B\big({t^\mp}^2-|v_d^\mp|^2\big)\Big]\Phi^\pm_j
    \nonumber
\\[2mm]
& \quad  \,-\,2A_\pm {\rm{Re}}\Big(v_d^\pm\overline{ (\Phi^\pm_j \Omega^\pm_j) }\Big) w^\pm(z_j) \,-\,2B {\rm{Re}}\Big(v_d^\mp \overline{ (\Phi^\mp_j \Omega^\mp_j) }\Big)    w^\pm(z_j),
 \end{align*}
 \[\A_2= \e^2 \Big[  \p^2_{ss}  \Phi^\pm_j  -4i   \p_s \Phi^\pm_j  -4 \Phi^\pm_j  \Big]+\frac1{\Omega_j^\pm} \Big[ 2 \nabla \Phi^\pm_j \nabla  \Omega^\pm_j +2 \e^2 \p_s\Omega^\pm_j\p_s \Phi^\pm_j \Big],
 \]
 \[ \A_3 =\frac1{\Omega_j^\pm} \Big[ \Delta \Omega^\pm_j+\e^2\p^2_{ss}  \Omega^\pm_j  -4i \e^2\p_s \Omega^\pm_j \Big] \Phi^\pm_j. \]
For $\A_1 $, we have
\begin{align*}
   \A_1   & = {L^0_\pm}  (\Phi_j)  +\Big[ A_\pm \big( |w^\mp(z_j)|^2-|v_d^\pm|^2 \big) + B\big( |w^\mp(z_j)|^2-|v_d^\mp|^2 \big) \Big]   \Phi^\pm_j
\nonumber
\\[2mm]  & \quad   +2A_\pm {\rm{Re}}\Big(w^\pm(z_j)\overline{ \Phi^\pm_j  }\Big)  w^\pm(z_j)  +2B {\rm{Re}}\Big(w^\mp(z_j) \overline{ \Phi^\mp_j }\Big)   w^\pm(z_j)
\nonumber
\\[2mm]  & \quad
\,-\,2A_\pm {\rm{Re}}\Big(v_d^\pm\overline{ (\Phi^\pm_j \Omega^\pm_j) }\Big)  w^\pm(z_j)-\,2B {\rm{Re}}\Big(v_d^\mp \overline{ (\Phi^\mp_j \Omega^\mp_j) }\Big)   w^\pm(z_j)
\nonumber
\\[2mm]  & = {L^0_\pm}  (\Phi_j)  +\Big[ A_\pm \big( |w^\pm(z_j)|^2-|v_d^\pm|^2 \big) + B\big( |w^\mp(z_j)|^2-|v_d^\mp|^2 \big) \Big]   \Phi^\pm_j
\nonumber
\\[2mm]  & \quad  +2A_\pm \big(1-|\Omega^\pm_j|^2 \big) {\rm{Re}}\Big(w^\pm(z_j)\overline{ \Phi^\pm_j  }\Big)  w^\pm(z_j)  + 2B\big(1-|\Omega^\mp_j|^2 \big) {\rm{Re}}\Big(w^\mp(z_j) \overline{ \Phi^\mp_j }\Big)   w^\pm(z_j).
 \end{align*}
 Since
 \begin{align*}
 \mathbb{E}^\pm  &= S^\pm(v_d)
 \nonumber
\\[2mm]  & = \Delta  (w^\pm(z_j) \Omega^\pm_j)  +  \Big[ A_\pm \big(1-|w^\pm(z_j)|^2 |\Omega^\pm_j|^2 \big)+B\big(1-|w^\mp(z_j)|^2  |\Omega^\mp_j|^2\big)\Big] w^\pm(z_j) \Omega^\pm_j
\nonumber
\\[2mm]  & \quad  +
\e^2\Big[\p^2_{ss} (w^\pm(z_j) \Omega^\pm_j) -4i\p_s (w^\pm(z_j) \Omega^\pm_j) -4 w^\pm(z_j) \Omega^\pm_j \Big]
\nonumber
\\[2mm]  & = w^\pm(z_j)   \Big[ \Delta \Omega^\pm_j+\e^2\p^2_{ss}  \Omega^\pm_j  -4i \e^2\p_s \Omega^\pm_j \Big] -4\e^2 w^\pm(z_j) \Omega^\pm_j
\nonumber
\\[2mm]  & \quad   +   \Big[ A_\pm |w^\pm(z_j)|^2  \big(1-|\Omega^\pm_j |^2   \big)+B |w^\mp(z_j)|^2  \big(1-|\Omega^\mp_j |^2   \big)\Big]  w^\pm(z_j) \Omega^\pm_j
\nonumber
\\[2mm]  & \quad  +   2\nabla  w^\pm (z_j)\nabla \Omega^\pm_j   + 2\e^2 \p_s w^\pm(z_j) \p_s\Omega^\pm_j
  +   \e^2 \Big[ \p_{ss} w^\pm(z_j) - 4i \p_sw^\pm (z_j)\Big] \Omega^\pm_j,
 \end{align*}
 we therefore conclude that
 \begin{align*}
   & \Delta \Omega^\pm_j+\e^2\p^2_{ss}  \Omega^\pm_j  -4i \e^2\p_s \Omega^\pm_j
  \nonumber
\\[2mm]  &
   = \frac{ \mathbb{E}^\pm} {w^\pm(z_j)} + 4\e^2  \Omega^\pm_j  - \frac{ 2\nabla  w^\pm(z_j)\nabla \Omega^\pm_j   + 2\e^2 \p_s w^\pm(z_j) \p_s\Omega^\pm_j  } {w^\pm(z_j)}
             \nonumber
\\[2mm]  &  \quad
   -  \Big[ A_\pm |w^\pm(z_j)|^2  \big(1-|\Omega^\pm_j |^2   \big)+B |w^\mp(z_j)|^2  \big(1-|\Omega^\mp_j |^2   \big)\Big]   \Omega^\pm_j
   \nonumber
\\[2mm]  &  \quad   -\e^2 \frac{\Omega^\pm_j  \Big[ \p_{ss} w^\pm(z_j) - 4i \p_sw^\pm(z_j)\Big]  } {w^\pm(z_j)}.
 \end{align*}
  Then we can get
  \begin{align*}
  \A_3  &=  \frac1{\Omega_j^\pm} \Big[ \Delta \Omega^\pm_j+\e^2\p^2_{ss}  \Omega^\pm_j  -4i \e^2\p_s \Omega^\pm_j \Big] \Phi^\pm_j
     \nonumber
\\[2mm]  &  =   \Bigg\{ \frac{ \mathbb{E}^\pm} {v_d^\pm}  + 4 \e^2 -  \frac{2\nabla  w^\pm(z_j)\nabla \Omega^\pm_j   + 2\e^2 \p_s w^\pm(z_j) \p_s\Omega^\pm_j } {v_d^\pm}
     \nonumber
\\[2mm]  &  \qquad
-\e^2 \frac{\Big[ \p_{ss} w^\pm(z_j) - 4i \p_sw^\pm(z_j)\Big]  } {w^\pm(z_j)} \Bigg\}   \Phi_j^\pm
   \nonumber
\\[2mm]  &  \quad  -
 \Big[ A_\pm |w^\pm(z_j)|^2  \big(1-|\Omega^\pm_j |^2   \big)+B |w^\mp(z_j)|^2  \big(1-|\Omega^\mp_j |^2   \big)\Big]  \Phi^\pm_j \ .
  \end{align*}
  Combining  all the above calculations, we obtain
   \begin{align}  \label{Lpmdj}
   L^\pm_{d,j}(\Phi_j)
  & =   \A_1+ \A_2+ \A_3 +  (\eta_1-1)\frac{\mathbb{E}^\pm} {v_d^\pm}\Phi_j^\pm
  \nonumber
\\[2mm]  & = {L^0_\pm}  (\Phi_j)
   +2A_\pm \big(1-|\Omega^\pm_j|^2 \big) {\rm{Re}}\Big(w^\pm(z_j)\overline{ \Phi^\pm_j  }\Big)  w^\pm(z_j)  + 2B\big(1-|\Omega^\mp_j|^2 \big) {\rm{Re}}\Big(w^\mp(z_j) \overline{ \Phi^\mp_j }\Big)   w^\pm(z_j)
     \nonumber
\\[2mm]  & \quad+
   \e^2 \Big[  \p^2_{ss}  \Phi^\pm_j  -4i   \p_s \Phi^\pm_j  -4 \Phi^\pm_j  \Big]+\frac1{\Omega_j^\pm} \Big[ 2 \nabla \Phi^\pm_j \nabla  \Omega^\pm_j +2 \e^2 \p_s\Omega^\pm_j\p_s \Phi^\pm_j \Big]
      \nonumber
\\[2mm]  &  \quad +   \Bigg\{  4 \e^2 -  \frac{2\nabla  w^\pm(z_j)\nabla \Omega^\pm_j   + 2\e^2 \p_s w^\pm(z_j) \p_s\Omega^\pm_j } {v_d^\pm}
- \e^2\frac{\Big[ \p_{ss} w^\pm(z_j) - 4i \p_sw^\pm(z_j)\Big]  } {w^\pm(z_j)} \Bigg\}   \Phi_j^\pm
      \nonumber
\\[2mm]  &  \quad  +  \eta_1\frac{\mathbb{E}^\pm} {v_d^\pm}\Phi_j^\pm \ .
  \end{align}

We  point out that for $|{z}_j|<3,$ from Lemma \eqref{lemmaofW}, there holds
\begin{equation*}
 |\Omega^\pm_j({z}_j)|=1+O_\e(\e^2|\ln\e|), \ \ \ \nabla \Omega^\pm_j({z}_j)=O_ \e(\e\sqrt{|\ln\e|}), \ \ \  |\Delta \Omega^\pm_j({z}_j)|=O_\e(\e^2|\ln\e|).
\end{equation*}
Therefore,  the linear operator $  L^\pm_{d,j}$ is a small perturbation of $L^0_\pm$ when near the vortices.

 \end{proof}

  \subsection{Symmetry assumptions on the perturbation}
 At the last of this part, we point out that  we would make some  symmetry  assumptions  for the perturbation  in our problem.
In fact, from the expression of $v_d= (v_d^+, v_d^-) $ in \eqref{eq:Approx2-dimensionalrescaled},  we  know that, under the coordinates $z=x_1+ ix_2$,
 there hold
 \begin{align*}
 v_d^\pm (-x_1, x_2)  = \overline {v_d}^\pm (x_1, x_2), \quad  v_d^\pm (x_1, -x_2)  = \overline{v_d}^\pm (x_1, x_2).
 \end{align*}
 Note that these symmetries   are  compatible with the solution  operator   $S(v)$:
 \[ \mbox{if } S^\pm  (v)=0 \ \mbox{ and }\ u(z) = (u^+, u^-) (z) := \big({\overline v^+}, {\overline v^-}\big) (x_1, -x_2),\quad \mbox{then } S(u)=0,  \]
 and the same property holds for $u(z) = (u^+, u^-) (z) := \Big( v^+ (-x_1, x_2),  v^- (-x_1, x_2)  \Big).$
   Then we naturally assume that  the perturbation $\psi  $ satisfies
  \begin{align}  \label{symetry1}
  \psi (x_1, -x_2)  = - \overline{\psi^\pm} (x_1, x_2), \quad   \psi (-x_1, x_2 )  = - \overline{\psi^\pm} (x_1, x_2).
  \end{align}

%
\bigskip

\section{Error estimate  and Fourier decomposition}   \label{section Error estimate of Approximation}
\subsection{Error estimate}
Recall the approximation $v_d$ of $S(v)=0$ in  \eqref{eq:Approx2-dimensionalrescaled}-\eqref{eq:Approx2-dimensionalrescaledpm}.
Suppose
\[
z-e_j = \ell_j e^{i\theta_j} \quad \text{for}~j=1, 2,
\]
where  the $e_j$'s are given in \eqref{ej}.
  We can rewrite  $ v_d $ in the new coordinates $(\ell_j, \theta_j)$ as
 \begin{align}  \label{vdinelltheta}
  v_d   = (v_d^+, v_d^-), \quad  v_d^\pm(z) =  ({t^\pm})^{-1} W^\pm(\ell_1) W^\pm(\ell_2) e^{i(\theta_1+\theta_2)}.
 \end{align}
 In this section, we would  give an accurate    estimation  of $S(v_d)$.   Let
 \[
 S(v_d)  = \big(S^+(v_d), S^-(v_d)\big) := i (v_d^+ \rR^+, v_d^-\rR^-),
 \]
 and
 \[
 S^\pm(v_d) =  S^\pm_0(v_d) +   S^\pm_1(v_d) : = i  v_d^\pm \rR_0^\pm + i  v_d^\pm \rR_1^\pm,
 \]
 with
 \[ S^\pm_0(v_d) =  i  v_d^\pm \rR_0^\pm.
 \]

\medskip
 To  estimate the size of  error  $\rR=(\rR^+, \rR^-)$, we introduce the norm $\|\cdot\|_{**}$.
 For given $0<\alpha,\sigma<1$, $\rH=(\rH^+, \rH^-)$,  we  define the norm
 \begin{align}
  \|\rH\|_{**} = \|\rH^+\|_{**,1} \ +\  \|\rH^-\|_{**,1},  \label{def:norm_**0}
 \end{align}
with
 \begin{align}
\|\rH^\pm\|_{**,1}
&:=\sum_{j=1}^2 \| v_d^\pm \rH^\pm\|_{C^\alpha(\ell_j<3)}
\ +\ \sup_{\ell_1>2,\ell_2>2}
\Bigg[
\frac{| {\rm{Re}}(\rH^\pm)|}{\ell_1^{-2}+\ell_2^{-2}+ \e^{2} }
+ \frac{ |{\rm{Im}}(\rH^\pm)|}{\ell_1^{-2+\sigma}+\ell_2^{-2+\sigma}+ \e^{2-\sigma}}
\Bigg] \nonumber
\\[2mm]
& \quad
\ +\  \sup_{2<\ell_1<2\mathbb{R}_\e , \,2<\ell_2<2\mathbb{R}_\e}
\frac{ [ {\rm{Re}}(\rH^\pm)]_{\alpha,B_{|z|/2}(z)}}{ \ell_1^{-2-\alpha} +  \ell_2^{-2-\alpha} }
\ +\  \sup_{2<\ell_1<2\mathbb{R}_\e , \, 2<\ell_2<2\mathbb{R}_\e}
\frac{ [  {\rm{Im}}(\rH^\pm)]_{\alpha,B_{1}(z)}}{\ell_1^{-2+\sigma} +  \ell_2^{-2+\sigma}} , \label{def:norm_**}
\end{align}
where $\mathbb{R}_\e=\frac{\alpha_0}{\e |\ln\e|^\frac12}$ for some $\alpha_0$ not large such that $\R_\e \le \frac 12  {\tilde d}$, and $\| \cdot\|_{C^\alpha(D)}, [\cdot ]_{\alpha,D}, \|\cdot\|_{C^{k,\alpha}(D)}$ are defined as
\begin{align*}
\|f\|_{C^\alpha(D)} &= \|f\|_{C^{0,\alpha}(D)},
\qquad
[f]_{\alpha,D}
 = \sup_{x,y\in D , \, x\not=y}\frac{|f(x)-f(y)|}{|x-y|^\alpha},
\\[2mm]
\|f\|_{C^{k,\alpha}(D)}
&=
\sum_{j=0}^k \|D^jf\|_{L^\infty	(D)}
+ [D^k f]_{\alpha,D},\quad \text{for}~f: \R^2\rightarrow \R.
\end{align*}


\begin{proposition}\label{prop:sizeoferror}
Recall that
\[
\rR= (\rR^+, \rR^-)\quad \text{with}\ \ \rR^\pm=  - i \frac{S^\pm(v_d)} {v_d^\pm}.
\]
The following estimate for $\rR$ holds
\begin{equation}\label{R-star-norm}
\| \rR\|_{**}\leq \frac{C}{|\ln \e|}.
\end{equation}
\end{proposition}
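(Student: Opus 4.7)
The plan is to split $\mathcal R = \mathcal R_0 + \mathcal R_1$, with $\mathcal R_0^\pm = -iS_0^\pm(v_d)/v_d^\pm$ and $\mathcal R_1^\pm=-iS_1^\pm(v_d)/v_d^\pm$, and to analyze each piece separately on the two inner disks $\ell_j<3$ and on the outer region $\ell_1,\ell_2>2$, gluing by a partition of unity. The key inputs throughout are the sharp asymptotics of $W^\pm$ in Lemma \ref{lemmaofW} and the fact that each single vortex $w^\pm(z-e_j)$ solves the 2D Ginzburg--Landau system $S_0(w)=0$.

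\emph{Algebraic reduction and the inner region.} Since $v_d^\pm = (t^\pm)^{-1}w^\pm(z-e_1)\,w^\pm(z-e_2)$ and each factor solves $S_0=0$, the Leibniz rule applied to the Laplacian gives
\[
\frac{S_0^\pm(v_d)}{v_d^\pm} \;=\; Q^\pm(z) \;+\; 2\,\frac{\nabla w^\pm(z-e_1)\cdot\nabla w^\pm(z-e_2)}{w^\pm(z-e_1)\,w^\pm(z-e_2)},
\]
where $Q^\pm$ collects the algebraic mismatch between the product potential of $v_d$ and the sum of the two single-vortex potentials. Near $e_1$ one has $\ell_2\sim 2\tilde d\sim 1/(\epsilon\sqrt{|\ln\epsilon|})$, so Lemma \ref{lemmaofW} yields $|W^\pm(\ell_2)|^2-(t^\pm)^2=O(\tilde d^{-2})=O(\epsilon^2|\ln\epsilon|)$ and $\nabla w^\pm(z-e_2)/w^\pm(z-e_2)=O(1/\tilde d)$. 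The possible $1/\ell_1$ singularity of the cross term at $e_1$ is cancelled by the weight $v_d^\pm$ (which vanishes like $\ell_1$), so a direct computation gives
\[
\|v_d^\pm \mathcal R_0^\pm\|_{C^\alpha(\ell_1<3)} \;\le\; C\Bigl(\tfrac{1}{\tilde d^2}+\tfrac{1}{\tilde d}\Bigr) \;\le\; \frac{C}{|\ln\epsilon|},
\]
and symmetrically on $\ell_2<3$.

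\emph{Outer region and Re/Im splitting.} For $\ell_1,\ell_2>2$, write $v_d^\pm = \rho^\pm e^{i(\theta_1+\theta_2)}$ with $\rho^\pm=|v_d^\pm|$ close to $t^\pm$. Dividing $\Delta v_d^\pm/v_d^\pm$ and adding the real potential $Q^\pm$ splits $\mathcal R_0^\pm$ into a real piece $\simeq\Delta\rho^\pm/\rho^\pm - |\nabla(\theta_1+\theta_2)|^2 + Q^\pm$ and an imaginary piece $\simeq 2\nabla\log\rho^\pm\cdot\nabla(\theta_1+\theta_2)$ (using that each $\theta_j$ is harmonic away from $e_j$). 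The expansions $W^\pm(\ell)=t^\pm-c_\pm/(2\ell^2)+O(\ell^{-4})$ and ${W^\pm}'(\ell)=c_\pm/\ell^3+O(\ell^{-5})$ give $|\nabla\rho^\pm|/\rho^\pm=O(\ell_j^{-3})$ and $|\nabla(\theta_1+\theta_2)|=O(\ell_j^{-1})$, so the real part decays like $\ell_1^{-2}+\ell_2^{-2}$ (from $|\nabla\theta_j|^2$, since the leading $\ell_j^{-2}$ contributions to $|v_d^\pm|^2-|w^\pm(z-e_1)|^2-|w^\pm(z-e_2)|^2+(t^\pm)^2$ cancel identically), while the imaginary part decays at least as $\ell^{-3}\cdot\ell^{-1}$, which is much faster than the required $\ell_j^{-2+\sigma}$. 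One additional derivative of these expansions controls the Hölder seminorms on $B_{|z|/2}(z)$ (real part) and $B_1(z)$ (imaginary part) with the same rates.

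\emph{Treatment of $S_1$ and conclusion.} Writing $\partial_s = x_1\partial_{x_2}-x_2\partial_{x_1}$ and using $|\partial_s v_d^\pm/v_d^\pm|\lesssim |z|\,|\nabla v_d^\pm/v_d^\pm|$ yields, near the $j$-th vortex, $|z|\lesssim\tilde d$ and hence
\[
\Bigl|\tfrac{S_1^\pm(v_d)}{v_d^\pm}\Bigr| \;\lesssim\; \epsilon^2\bigl(1+\tilde d^2\bigr) \;=\; O\bigl(|\ln\epsilon|^{-1}\bigr),
\]
while on the outer region the same expression is $O(\epsilon^2)$, absorbed by the $\epsilon^{2-\sigma}$ and $\epsilon^2$ slots in $\|\cdot\|_{**,1}$. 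The constant piece $-4\epsilon^2$ is real, and $-4i\epsilon^2\partial_s v_d^\pm/v_d^\pm$ becomes purely imaginary after factoring out the phase of $v_d^\pm$, so the Re/Im decomposition carries over to $\mathcal R_1^\pm$. Combining all contributions yields $\|\mathcal R\|_{**}\le C/|\ln\epsilon|$. The main obstacle is the precise Re/Im decomposition in the intermediate range where the expansions of $W^\pm$ and those of $\theta_1+\theta_2$ must be differentiated and combined carefully; once the leading $\ell^{-2}$ cancellations in the potential residue $Q^\pm$ are identified, the sharper $\ell^{-2+\sigma}$ bound on the imaginary part follows essentially for free.
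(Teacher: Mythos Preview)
Your strategy is the same as the paper's (compute $S_0^\pm(v_d)$ and $S_1^\pm(v_d)$ explicitly, then estimate each piece against the $\|\cdot\|_{**}$ weights), but your outer-region analysis of $\mathcal R_0$ contains a genuine gap.

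First, a bookkeeping slip: since $\mathcal R_0^\pm=-\,i\,S_0^\pm(v_d)/v_d^\pm$, the quantity you call the ``real piece'' $\Delta\rho^\pm/\rho^\pm-|\nabla(\theta_1+\theta_2)|^2+Q^\pm$ is actually $-\,\mathrm{Im}(\mathcal R_0^\pm)$, and $2\nabla\log\rho^\pm\cdot\nabla(\theta_1+\theta_2)$ is $\mathrm{Re}(\mathcal R_0^\pm)$. More seriously, your claim that the former ``decays like $\ell_1^{-2}+\ell_2^{-2}$ from $|\nabla\theta_j|^2$'' is wrong and, even if it were right, would not give the required bound. The diagonal terms $|\nabla\theta_j|^2=\ell_j^{-2}$ are \emph{exactly} cancelled by the corresponding pieces of $\Delta\rho^\pm/\rho^\pm$ through the single-vortex ODEs for $W^\pm$; what survives is the \emph{cross} phase term $2\nabla\theta_1\cdot\nabla\theta_2=2\cos(\theta_1-\theta_2)/(\ell_1\ell_2)$ together with $Q^\pm=O((\ell_1\ell_2)^{-2})$. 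Had the diagonal $\ell_j^{-2}$ terms survived, the ratio against either weight $\ell_1^{-2}+\ell_2^{-2}+\e^2$ or $\ell_1^{-2+\sigma}+\ell_2^{-2+\sigma}+\e^{2-\sigma}$ would be $O(1)$, not $O(|\ln\e|^{-1})$, and the proposition would fail.

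The missing mechanism for smallness is the vortex separation: since $\ell_1+\ell_2\ge |e_1-e_2|=2\tilde d$, one always has $\max(\ell_1,\ell_2)\ge\tilde d$. Assuming $\ell_1\le\ell_2$ this yields
\[
\frac{1}{\ell_1\ell_2}\;\le\;\frac{1}{\tilde d\,\ell_1}\;\le\;\frac{\ell_1^{1-\sigma}}{\tilde d}\,\ell_1^{-2+\sigma}\;\le\;\tilde d^{-\sigma}\bigl(\ell_1^{-2+\sigma}+\ell_2^{-2+\sigma}\bigr),
\]
and $\tilde d^{-\sigma}=(\e\sqrt{|\ln\e|})^{\sigma}\ll |\ln\e|^{-1}$. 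This is how the outer $\mathcal R_0$ contribution is controlled; in fact it is much smaller than $C/|\ln\e|$, and the saturation of the bound comes entirely from $\mathcal R_1$ through the $\hat d^{\,2}/|\ln\e|$ prefactors in \eqref{S1vd}. Your $S_1$ sketch (``$O(\e^2)$ on the outer region'') likewise needs this kind of term-by-term check against the anisotropic weights rather than a single global bound, but once the $\mathcal R_0$ cancellation and the separation argument are in place, the rest goes through as in Proposition~4.1 of \cite{DDMR}.
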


\begin{proof}
Without loss of generality, we assume $x_1>0$.
  For  $S^\pm(v_d)$, we  recall that
\begin{align*}
S^\pm(v_d) = S^\pm_0(v_d) + S^\pm_1(v_d),
\end{align*}
with\begin{align*}
S_0^\pm(v_d)   = \Delta v_d^\pm
+\Big[ A_\pm \big({t^+}^2-|v_d^\pm|^2\big)+B\big({t^-}^2-|v_d^\mp|^2\big)\Big]v_d^\pm,
\end{align*}
and
\begin{align*}
  S_1^\pm(v_d)    &=  \e^2(\p^2_{ss}v_d^\pm -4i\p_sv_d^\pm -4v_d^\pm).
  \end{align*}
 Denote
 \[
w^\pm_a (z):=w^\pm(z-\tilde d)    , \quad  w^\pm_b (z):=w^\pm(z+\tilde d).
 \]
 Since $x_1>0$,  it follows from Lemma \ref{lemmaofW} that
 \[
 ({t^\pm})^{-1}|w^\pm_b (z)| = 1+O_\e(\e^2|\ln\e|).
 \]

\medskip
Substituting \eqref{vdinelltheta} into  the expressions of $S_0^\pm(v_d) $, we get
 \begin{align}  \label{S0vdpm}
 S_0^\pm(v_d)
   & = ({t^\pm})^{-1} \Bigg\{\, \Delta (w^\pm_a w^\pm_b)  + \Big[ A_\pm \big({t^\pm}^2- ({t^\pm})^{-2}|w^\pm_a|^2| w^\pm_b|^2\big)+B\big({t^\mp}^2-({t^\mp})^{-2}|w^\mp_a|^2| w^\mp_b|^2\big)\Big]w^\pm_a w^\pm_b\Bigg\}
  \nonumber
\\[2mm]
& =   ({t^\pm})^{-1} \Bigg\{\Big( w^\pm_b \Delta  w^\pm_a  + w^\pm_a \Delta  w^\pm_b+ 2 \nabla  w^\pm_a   \nabla w^\pm_b\Big)
  \nonumber
\\[2mm]
& \qquad\qquad
+  \Big[ A_\pm \big({t^\pm}^2- ({t^\pm})^{-2}|w^\pm_a|^2| w^\pm_b|^2\big)+B\big({t^\mp}^2-({t^\mp})^{-2}|w^\mp_a|^2| w^\mp_b|^2\big)\Big] w^\pm_a w^\pm_b
\Bigg\}
  \nonumber
\\[2mm]
& =  -  ({t^\pm})^{-1}  \Big[ A_\pm \big({t^\pm}^2-|w^\pm_a|^2\big)+B\big({t^\mp}^2-|w^\mp_a|^2\big)\Big] w^\pm_a w^\pm_b
  \nonumber
\\[2mm]
&\quad
 -  ({t^\pm})^{-1} \Big[ A_\pm \big({t^\pm}^2-|w^\pm_b|^2\big)+B\big({t^\mp}^2-|w^\mp_b|^2\big)\Big]w^\pm_a w^\pm_b
  \nonumber
\\[2mm]
&\quad
+   ({t^\pm})^{-1} \Big[ A_\pm \big({t^\pm}^2- ({t^\pm})^{-2}|w^\pm_a|^2| w^\pm_b|^2\big)+B\big({t^\mp}^2-({t^\mp})^{-2}|w^\mp_a|^2| w^\mp_b|^2\big)\Big] w^\pm_a w^\pm_b
  \nonumber
\\[2mm]
&\quad
+  ({t^\pm})^{-1} \Big({W^\pm}'(\ell_1) e^{i\theta_1}  \nabla{\ell_1} +  i {W^\pm}(\ell_1) e^{i\theta_1}  \nabla{\theta_1} \Big)\cdot\Big({W^\pm}'(\ell_2) e^{i\theta_2}  \nabla{\ell_2} +  i {W^\pm}(\ell_2) e^{i\theta_2}  \nabla{\theta_2} \Big)
  \nonumber
\\[2mm]
&   =   ({t^\pm})^{-1}  \Big[ A_\pm   \big(|w^\pm_a|^2 - {t^\pm}^2\big) \big(1-({t^\pm})^{-2}|w^\pm_b|^2\big)  + B  \big(|w^\mp_a|^2 -  {t^\mp}^2\big) \big(1-({t^\mp})^{-2}|w^\mp_b|^2\big) \Big]w^\pm_a w^\pm_b
\nonumber
\\[2mm]
&\quad    +   ({t^\pm})^{-1}   \Bigg[
{W^\pm}'(\ell_1)  {W^\pm}'(\ell_2) \big(\cos{\theta_1}\cos{\theta_2}+\sin{\theta_1}\sin{\theta_2}\big)
\nonumber
\\[2mm]
&\qquad\qquad\quad
- {W^\pm}(\ell_1) W^\pm(\ell_2) \frac{\cos{\theta_1}\cos{\theta_1}-\sin{\theta_1}\sin{\theta_2} } {\ell_1\ell_2}
\nonumber
\\[2mm]
&\qquad\qquad\quad + i {W^\pm}'(\ell_1) W^\pm(\ell_2) \frac{\cos{\theta_2}\sin{\theta_1}-\cos{\theta_1}\sin{\theta_2} } {\ell_2}
 \nonumber
\\[2mm]
&\qquad\qquad\quad +i{W^\pm}'(\ell_2) W^\pm(\ell_1) \frac{\cos{\theta_1}\sin{\theta_2}-\cos{\theta_2}\sin{\theta_1} } {\ell_1}  \Bigg] e^{i(\theta_1+\theta_2)}.
 \end{align}

  On the other hand, we consider the expression of $ S_1^\pm(v_d)  $. We recall that
  \begin{equation*}\nonumber
\ell_1= \sqrt{(r\cos s -\tilde{d})^2+r^2\sin^2s}, \ \ \ \ \ \ell_2= \sqrt{(r\cos s+\tilde{d})^2+r^2\sin^2s},
\end{equation*}
\begin{equation*}
e^{i\theta_1}= \frac{(r\cos s-\tilde{d})+ir\sin s}{\ell_1}, \ \ \ \ e^{i\theta_2}= \frac{(r\cos s+\tilde{d})+ir\sin s}{\ell_2}. \nonumber
\end{equation*}
Similar to the analysis   in the proof of Proposition $4.1$ in   \cite{DDMR}, we give  the  expressions of $ S_1^\pm(v_d)  $ in   $(\ell_j, \theta_j)$ as the following
 \begin{align}  \label{S1vd}
  & S_1^\pm(v_d)    \nonumber
  \\[2mm]
   &= \,\Bigg\{ \frac{\hat{d}^2}{|\ln \e|} \Bigg[ {W^\pm}'' (\ell_1) {W^\pm}(\ell_2) \sin^2{\theta_1}  +  +{W^\pm}'' (\ell_2) {W^\pm}(\ell_1) \sin^2 {\theta_2} -{W^\pm}' (\ell_1) {W^\pm}' (\ell_2)\sin {\theta_1} \sin {\theta_2}
   \nonumber
 \\[2mm]
   &\qquad\qquad  + \frac{\cos^2{\theta_1}}{\ell_1} {W^\pm}' (\ell_1) {W^\pm}(\ell_2) +\frac{\cos^2 {\theta_2}}{\ell_2}{W^\pm}' (\ell_2) {W^\pm}(\ell_1)-
\Big(\frac{\cos \theta_1}{\ell_1}-\frac{\cos \theta_2}{\ell_2} \Big)^2 {W^\pm}(\ell_1) {W^\pm}(\ell_2) \Bigg]  \nonumber
   \\[2mm]
   &\qquad
  + \frac{\e \hat{d}}{\sqrt{|\ln \e|}}\Bigl(\cos {\theta_1} {W^\pm}' (\ell_1)  {W^\pm} (\ell_2) -\cos {\theta_2} {W^\pm}' (\ell_2) {W^\pm}(\ell_1) \Bigr)
  \Bigg\} (t^{\pm})^{-1}e^{i({\theta_1}+{\theta_2})}
  \nonumber
  \\[2mm]
   &\quad   +i\Bigg[ \frac{\e \hat{d}} {\sqrt{|\ln \e|}}\Big( \frac{\sin {\theta_2}}{\ell_2} - \frac{\sin {\theta_1}}{\ell_1} \Big)
   {W^\pm}(\ell_1) {W^\pm}(\ell_2) -\frac{2\hat{d}^2}{|\ln \e|}\Big(\frac{\sin {\theta_1} \cos {\theta_1}}{\ell_1^2}+\frac{\sin {\theta_2}\cos {\theta_2}}{\ell_2^2} \Big){W^\pm}(\ell_1){W^\pm}(\ell_2)  \nonumber
     \\[2mm]
   &\qquad\quad   +\frac{2\hat{d}^2}{|\ln \e|}\Big(\frac{\cos {\theta_1}}{\ell_1}-\frac{\cos {\theta_2}}{\ell_2} \Big)\Big( \sin {\theta_1} {W^\pm}'(\ell_1) {W^\pm}(\ell_2)-\sin {\theta_2} {W^\pm}(\ell_2)' {W^\pm}(\ell_1) \Big)  \Bigg] (t^{\pm})^{-1}e^{i({\theta_1}+{\theta_2})}.
   \end{align}

Proceeding as      in the proof of Proposition $4.1$ in  \cite{DDMR}, we can show that \eqref{R-star-norm} holds.
\end{proof}

\medskip
From the expression of $ S_1^\pm(v_d)$ in  \eqref{S1vd}, we deduce the following:
\begin{lemma}  \label{lemma5.2}
    In the region $B_{\tilde{d}} (e_1)$, we have the decompositions
    \begin{align}     \label{decomposition-s1}
   S_1^\pm(v_d)=  \frac{\hat{d}^2}{|\ln \e|}w^\pm_{a,x_2 x_2} w_b^\pm +\frac{\hat{d}\e}{\sqrt{|\ln \e|}}w^\pm_{a,x_1} w_b^\pm + \G^\pm,
   \end{align}
where $\G^\pm$ are the terms given in \eqref{gammapm}.   Moreover, there hold
   \begin{equation}
\label{eq:integralestimate}
{\rm{Re} }\int_{B_{\tilde{d}} (e_1)} w^\pm_{a,x_2 x_2}  \overline{w}^\pm_{a,x_1}  =0\quad \mbox{ and }\quad
{\rm{Re}}   \int_{B_{\tilde{d}} (e_1)} \frac{\G^\pm}{w^\pm_b} \overline{w}^\pm_{a,x_1}=O_\e\left(\frac{\e}{\sqrt{|\ln \e|}}\right).
 \end{equation}
   \end{lemma}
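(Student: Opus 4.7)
The plan is to derive the decomposition by a Taylor expansion of $S_1^\pm(v_d) = \epsilon^2(\partial_{ss}^2 - 4i \partial_s - 4) v_d^\pm$ in local Cartesian coordinates about the first vortex, and then to obtain the two integral identities by exploiting reflection symmetries of the standard vortex $w^\pm$.

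For Step~1 (the decomposition), I would use the identity $\partial_s = \partial_\theta = x_1 \partial_{x_2} - x_2 \partial_{x_1}$, which yields
\begin{equation*}
\partial_{ss}^2 = x_1^2 \partial_{x_2 x_2}^2 - 2 x_1 x_2 \partial_{x_1 x_2}^2 + x_2^2 \partial_{x_1 x_1}^2 - x_1 \partial_{x_1} - x_2 \partial_{x_2}.
\end{equation*}
In $B_{\tilde d}(e_1)$, setting $z_1 = z - \tilde d e_1$ gives $x_1 = \tilde d + z_{1,1}$, $x_2 = z_{1,2}$, while $w_b^\pm$ is a smooth factor with controlled derivatives thanks to Lemma~\ref{lemmaofW}. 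Retaining only the leading contributions---those obtained by letting both derivatives fall on $w_a^\pm$ and replacing $x_1^2$ and $x_1$ by $\tilde d^2$ and $\tilde d$---produces exactly $\tfrac{\hat d^2}{|\ln\epsilon|} w_{a,x_2 x_2}^\pm w_b^\pm$ together with a term of the form $\tfrac{\epsilon \hat d}{\sqrt{|\ln\epsilon|}} w_{a,x_1}^\pm w_b^\pm$, matching the two explicit summands in \eqref{decomposition-s1}. All other contributions---the pieces $-4i\epsilon^2\partial_s v_d^\pm - 4\epsilon^2 v_d^\pm$, terms in which a derivative falls on $w_b^\pm$, and the $O(|z_1|)$ Taylor remainders of the coefficients $x_1^2, x_1 x_2, x_2^2, x_1, x_2$ about $(\tilde d, 0)$---are grouped into $\mathcal G^\pm$.

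For Step~2 (first identity), translating via $z_1 = z - \tilde d e_1$ turns the integration domain into $B_{\tilde d}(0)$, which is invariant under the reflection $(x_1, x_2) \mapsto (-x_1, x_2)$. Since $w^\pm(z_1) = W^\pm(|z_1|)\, z_1/|z_1|$, the parity
\begin{equation*}
w^\pm(-x_1, x_2) = -\,\overline{w^\pm(x_1, x_2)}
\end{equation*}
holds, and differentiation gives $w^\pm_{x_1}(-x_1, x_2) = \overline{w^\pm_{x_1}(x_1, x_2)}$ and $w^\pm_{x_2 x_2}(-x_1, x_2) = -\overline{w^\pm_{x_2 x_2}(x_1, x_2)}$. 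Denoting the integral by $I$, the substitution $x_1 \mapsto -x_1$ yields $I = -\overline{I}$, so $I$ is purely imaginary and $\mathrm{Re}\,I = 0$.

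For Step~3 (second identity), I would split $\mathcal G^\pm/w_b^\pm$ into groups and combine crude pointwise bounds with parity cancellation. The pieces $-4i\epsilon^2 \partial_s v_d^\pm/w_b^\pm$ and $-4\epsilon^2 v_d^\pm/w_b^\pm$ are already pointwise $O(\epsilon^2 \tilde d) = O(\epsilon/\sqrt{|\ln\epsilon|})$, and terms with a derivative on $w_b^\pm$ are even smaller by the decay of $W^{\pm\prime}$ in Lemma~\ref{lemmaofW}. The delicate contributions are the Taylor remainders of the form $\epsilon^2 \tilde d\, z_{1,j}\, w^\pm_{a,x_i x_k}$, which pointwise can reach $(\epsilon/\sqrt{|\ln\epsilon|})|z_1|$; here, the parities of $z_{1,1}, z_{1,2}$ and of $w^\pm_{a,x_i x_k}$ under $(x_1, x_2) \mapsto (-x_1, x_2)$ combine with the parity of $\overline{w^\pm_{a,x_1}}$ from Step~2 to kill the leading real parts, and only the subleading odd-parity remainders survive, yielding the claimed $O(\epsilon/\sqrt{|\ln\epsilon|})$ bound after integration and a Cauchy--Schwarz argument using the $L^2$ control on $w^\pm_{a,x_1}$ provided by Lemma~\ref{lemmaofW}. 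The main obstacle is precisely this last point: several remainders in $\mathcal G^\pm$ are individually larger than the target pointwise, so the parity cancellation of Step~2 must be reused term by term in order to beat the naïve bounds, and the announced estimate emerges only after isolating the sub-leading odd-parity corrections.
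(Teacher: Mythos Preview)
Your Step~2 is correct and is exactly the orthogonality argument the paper uses. The problems lie in Steps~1 and~3.

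The paper does \emph{not} Taylor expand the coefficients $x_1^2,x_1x_2,\dots$ about $(\tilde d,0)$. It starts from the exact polar expression \eqref{S1vd} and simply separates the terms that depend only on $(\ell_1,\theta_1)$; those terms are \emph{algebraically equal} to $\tilde d^2 w^\pm_{a,x_2x_2}+\tilde d\,w^\pm_{a,x_1}$ (times $w_b^\pm$), with no remainder. Everything left in $\Gamma^\pm$ carries an explicit factor $1/\ell_2$, $(W^\pm)'(\ell_2)$ or $(W^\pm)''(\ell_2)$, and since $\ell_2\ge\tilde d$ on $B_{\tilde d}(e_1)$ these are already of size $O(\e/\sqrt{|\ln\e|})$ after pairing with $\overline{w^\pm_{a,x_1}}$. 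No parity cancellation is needed for the second estimate in \eqref{eq:integralestimate}.

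Your Taylor approach produces first--order remainders such as $2\e^2\tilde d\,z_{1,1}\,w^\pm_{a,x_2x_2}$ and $-2\e^2\tilde d\,z_{1,2}\,w^\pm_{a,x_1x_2}$ that are individually too large, and the parity you invoke does not kill them: under $(z_{1,1},z_{1,2})\mapsto(-z_{1,1},z_{1,2})$ one has $z_{1,1}\mapsto-z_{1,1}$, $w^\pm_{a,x_2x_2}\mapsto-\overline{w^\pm_{a,x_2x_2}}$, $\overline{w^\pm_{a,x_1}}\mapsto w^\pm_{a,x_1}$, so the product goes to its complex conjugate and the real part is \emph{even}, not odd. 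Concretely,
\[
{\rm Re}\int_{B_{\tilde d}(0)} 2\e^2\tilde d\bigl(z_{1,1}w^\pm_{a,x_2x_2}-z_{1,2}w^\pm_{a,x_1x_2}\bigr)\overline{w^\pm_{a,x_1}}
=2\e^2\tilde d\int_{B_{\tilde d}(0)}|w^\pm_{a,x_1}|^2+O(\e^2\tilde d)
\sim \e\sqrt{|\ln\e|},
\]
one full logarithm larger than the target.

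The repair is algebraic, not parity--based: because $z_{1,1}\partial_{x_2}-z_{1,2}\partial_{x_1}=\partial_{\theta_1}$ and $\partial_{\theta_1}w_a^\pm=iw_a^\pm$, one has $\partial_s=\tilde d\,\partial_{x_2}+\partial_{\theta_1}$ and hence, exactly,
\[
(\partial_s^2-4i\partial_s-4)(w_a^\pm w_b^\pm)
=\tilde d^2 w^\pm_{a,x_2x_2}w_b^\pm+\tilde d\,w^\pm_{a,x_1}w_b^\pm+\{\text{terms with at least one derivative on }w_b^\pm\}.
\]
This is the decomposition \eqref{decomposition-s1}; note it also corrects the sign of the $w^\pm_{a,x_1}$ coefficient, which your zeroth--order replacement $x_1\to\tilde d$, $x_2\to0$ in $-x_1\partial_{x_1}$ would get as $-\tilde d$ rather than $+\tilde d$. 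After this, $\Gamma^\pm$ is exactly the collection of $w_b^\pm$--derivative terms, for which your crude bounds via Lemma~\ref{lemmaofW} are sufficient.
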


    \begin{proof}
We can show easily that the first equality of \eqref{eq:integralestimate} holds by orthogonality.  From \eqref{S1vd}, we have

\begin{align*}
S_1^\pm(v_d)
   &= \, \frac{\hat{d}^2   {W^\pm}(\ell_2) e^{i({\theta_1}+{\theta_2})}}{|\ln \e|\, t^{\pm}}
   \Bigg[ {W^\pm}'' (\ell_1)\sin^2{\theta_1}   + \Big(\cos^2\theta_1 +2i \cos{\theta_1} \sin{\theta_1}\Big)     \Big(\frac{{W^\pm}' (\ell_1)} {\ell_1}-\frac{{W^\pm} (\ell_1)} {\ell_1^2}   \Big)     \Bigg]
   \nonumber
  \\[2mm]
   & \quad +   \frac{\hat{d} \e}{\sqrt{|\ln \e|\, t^{\pm}}} \Bigl[\cos {\theta_1} {W^\pm}' (\ell_1)  {W^\pm} (\ell_2)  - i {W^\pm} (\ell_1)  {W^\pm} (\ell_2) \frac{\sin {\theta_1}} {\ell_2}  \Bigr] + \G^\pm,
       \end{align*}
 where
 \begin{align}
 \G^\pm  &  = \,\Bigg\{ \frac{\hat{d}^2}{|\ln \e|} \Bigg[  {W^\pm}'' (\ell_2) {W^\pm}(\ell_1) \sin^2 {\theta_2} -{W^\pm}' (\ell_1) {W^\pm}' (\ell_2)\sin {\theta_1} \sin {\theta_2}  \nonumber
 \\[2mm]
   &\qquad\qquad\qquad  +\frac{\cos^2 {\theta_2}}{\ell_2}{W^\pm}' (\ell_2) {W^\pm}(\ell_1)-
\left(\frac{-2\cos \theta_1 \cos \theta_2}{\ell_1\ell_2}+\frac{\cos^2 \theta_2}{\ell_2^2} \right) {W^\pm}(\ell_1) {W^\pm}(\ell_2) \Bigg]  \nonumber
   \\[2mm]
   &\qquad
  + \frac{\e \hat{d}}{\sqrt{|\ln \e|}}\Bigl( -\cos {\theta_2} {W^\pm}' (\ell_2) {W^\pm}(\ell_1) \Bigr)
  \Bigg\} (t^\pm)^{-1}e^{i({\theta_1}+{\theta_2})}
  \nonumber
  \\[2mm]
   &\quad   +i\Bigg[ \frac{\e \hat{d}} {\sqrt{|\ln \e|}} \frac{\sin {\theta_2}}{\ell_2}
   {W^\pm}(\ell_1) {W^\pm}(\ell_2) -\frac{2\hat{d}^2}{|\ln \e|}\left( \frac{\sin {\theta_2}\cos {\theta_2}}{\ell_2^2} \right){W^\pm}(\ell_1){W^\pm}(\ell_2)  \nonumber
     \\[2mm]
   &\qquad   -\frac{2\hat{d}^2}{|\ln \e|}  \frac{\cos {\theta_2}}{\ell_2}  \Big( \sin {\theta_1} {W^\pm}'(\ell_1) {W^\pm}(\ell_2)-\sin {\theta_2} {W^\pm}'(\ell_2){W^\pm}(\ell_1) \Big)
    \nonumber
     \\[2mm]
   &\qquad
    -\frac{2\hat{d}^2}{|\ln \e|} \frac{\cos {\theta_1}}{\ell_1} \sin {\theta_2} {W^\pm}'(\ell_2) {W^\pm}(\ell_1)
    \Bigg] (t^\pm)^{-1}e^{i({\theta_1}+{\theta_2})}.
    \label{gammapm}
   \end{align}
From the decompositions above, we know that \eqref{decomposition-s1} and \eqref{eq:integralestimate} hold.
\end{proof}

In Lemma \ref{lemma5.2},  we decompose $S^{+}_1(v_d)$ into two parts: one is $\frac{\hat{d}^2}{|\ln \e|}w^+_{a,x_2 x_2} w^{+}_b$ which is in big size but has a symmetry that makes it orthogonal to the kernel ${w}^+_{a,x_1}$ locally, the other one is  $\frac{\hat{d}\e}{\sqrt{|\ln \e|}}w^+_{a,x_1} w^{+}_b + \G^+ $ which has smaller size.
Similar decomposition also holds for $S^{-}_1(v_d)$.
Lemma \ref{lemma5.2}   will be used in the reduction process in Section \ref{section77}.

\subsection{Fourier  series  for the error}
\label{section3.2}
Next, we shall  make a more precise decomposition for the error $\rR$.
Recall the relations
\[
z-e_j  = \ell_j e^{i\theta_j} \quad \text{for}~j=1, 2,
\]
where $e_j$'s are given in \eqref{ej}.
For given  $\rH =(\rH^+, \rH^-)\in \mathbb{C}^2$ satisfying
$$
\rH(\overline{z})= - \overline{\rH}(z),\quad
\text{i.e,}   \quad \rH^\pm(\overline{z})= - \overline{\rH^\pm}(z),
$$
we have the following local decomposition for $\rH^\pm$  when near $e_j$,  $j=1,2$,
   \begin{align}
& \rH^\pm (z)  = \sum_{k=0}^\infty \rH^{\pm}_{k,j} (\ell_j, \theta_j),  \label{def:h_Fourier}
\end{align}
and
\begin{align}
\rH^{\pm}_{k,j}(\ell_j,\theta_j) = \rH_{k,j}^{1, \pm}(\ell_j) \sin (k\theta_j)
+ i \rH_{k,j}^{2, \pm}(\ell_j) \cos(k\theta_j), \quad
\rH_{k,j}^{1, \pm}(\ell_j), \rH_{k,j}^{2, \pm}(\ell_j) \in \R.
\label{HKJ}
\end{align}
Then define
\begin{align}\label{Hevenpm}
\rH^{\pm}_{e,j} = \sum_{k ~\text{is even}} \rH^{\pm}_{k,j},
\qquad
\rH^{\pm}_{o,j} = \sum_{k ~\text{is odd}} \rH^{\pm}_{k,j},
\end{align}
 \begin{align}\label{Heven}
 \rH_{e,j} =( \rH^{+}_{e,j}, \rH^{-}_{e,j}),  \qquad  \rH_{o,j} =( \rH^{+}_{o,j}, \rH^{-}_{o,j}).
 \end{align}
Let $  \mathscr{R}_j z$ denote  the reflection  across the real line ${{\rm{Re}}(z)}=(-1)^{j+1} \tilde{d}$, then
  \begin{align} \label{mathscrR}
  \mathscr{R}_j z= 2e_j -{{\rm{Re}} (z)} + i {{\rm{Im}}(z)}, \quad\text{or}\quad \mathscr{R}_j z = \ell_j e^{i(\pi -\theta_j)}+e_j .
  \end{align}
 Combining  \eqref{mathscrR}  and \eqref{HKJ}-\eqref{Heven}, we get
\begin{align*}
& \rH^{\pm}_{e,j}\big(\mathscr{R}_j z \big) =  - \overline{\rH^{\pm}_{e,j}}(z),
 \qquad
 \rH^{\pm}_{o,j}\big(\mathscr{R}_j z \big) = \overline{\rH^{\pm}_{o,j}}(z),
\\[3mm]
&   \rH_{e,j}\big(\mathscr{R}_j z \big) =  - \overline{\rH_{e,j}}(z),
\qquad
\rH_{o,j}\big(\mathscr{R}_j z \big) =   \overline{\rH_{o,j}}(z),
\end{align*}
 and thus we can
  define equivalently
\begin{align*}
& \rH_{o,j}^\pm(z) = \frac{1}{2}\Big[ \rH^\pm(z) + \overline{\rH^\pm(\mathscr{R}_j z )}\Big] ,
\qquad
\rH_{e,j}^\pm(z) = \frac{1}{2}\Big[ \rH^\pm(z) - \overline{\rH^\pm(\mathscr{R}_j z )}\Big],
\\[3mm]
 & \rH_{o,j}(z) = \frac{1}{2}\Big[ \rH(z) + \overline{\rH(\mathscr{R}_j z )}\Big] ,
 \qquad
\quad\, \, \, \rH_{e,j}(z) = \frac{1}{2}\Big[ \rH(z) - \overline{\rH(\mathscr{R}_j z )}\Big].
\end{align*}
Note that, in the coordinates $(\ell_j, \theta_j)$,  the functions $\rH_{o,j}$  introduced  in  \eqref{HKJ}-\eqref{Heven}   are  the odd parts of  $\rH$ in local domains $B_{\tilde{d}} (e_j), j=1,2$.
Furthermore,
we  would define  functions $ \rH^\pm_o, \rH_{o}$    which stand for the odd part of $\rH^\pm, \rH$   globally.
Define  the cut-off functions $\eta_{j, R} $ for $j=1, 2$  as the following
\begin{align}
\eta_{j, R} (z) = \eta_j\Big(\frac{|z-e_j |} {2}\Big),
\label{cutoff}
\end{align}
   where $\eta_1$ is the cut-off functions defined in \eqref{eta1} and $R$ is a constant to be chosen later.
For any given $\rH = (\rH^+, \rH^-)\in \mathbb{C}^2$, we define
   \begin{align}  \label{def-ho}
   \rH_o=\big(   \rH_o^+,    \rH_o^- \big), \quad\text{with}\quad    \rH_o^\pm = \eta_{1, \mathbb{R}_\e}  \rH_{o,1}^\pm+   \eta_{2, \mathbb{R}_\e}  \rH_{o,2}^\pm,  \quad
   \end{align}
where $\mathbb{R}_\e= \frac{c_0} {\e|\ln\e|^{\frac 12} } $ for some $c_0$ not large such that $\mathbb{R}_\e \le \frac 12 \tilde{d}.$
On the other hand,    define the global functions
 \[ \rH_e= \rH-\rH_o, \quad\text{with}\quad \rH_e^\pm= \rH^\pm-\rH_o^\pm.   \]

 We note that, using the decompositions introduced above, we could decompose   the error function
$$
\rR=(\rR^+, \rR^-)=\Bigg(\frac{S^+(v_d)}{iv_d^+}, \frac{S^-(v_d)}{iv_d^-}\Bigg)
$$
in odd Fourier modes and even Fourier modes, see Proposition \ref{errorProp2}.  The odd Fourier modes of the error have much smaller sizes than even ones. However, there exists one difficulty: some terms of  odd Fourier modes have a good size but   decay  slowly.
Using the idea in \cite{DDMR}, we introduce  the  semi-norm $|\cdot|_{\sharp\sharp} $ for these slow decay terms.
For any $\rH =(\rH^+, \rH^-) =(\rH^+_1+i\rH^+_2 , \rH^-_1+i\rH^-_2)\in \mathbb{C}^2 $,   the  semi-norm $|\rH|_{\sharp\sharp} $ is defined as
   \begin{align} \label{sharpsharpnorm}
   |\rH|_{\sharp\sharp} = |\rH^+|_{\sharp\sharp_1} + |\rH^-|_{\sharp\sharp_1},
   \end{align}
where
  \begin{align}
|\rH^\pm|_{\sharp\sharp_1}
= \sum_{j=1}^2 \|  v_d^\pm \rH^\pm  \|_{C^{0,\alpha}(\ell_j<4)}
\ +\  \sup_{2<\ell_1< \mathbb{R}_\e, \, 2<\ell_2 <\mathbb{R}_\e}
\Biggl[
\frac{| \rH^\pm_1|}{\ell_1^{-1} + \ell_2^{-1}}
+\frac{ | \rH^\pm_2 | }{\ell_1^{-1+\sigma} + \ell_2^{-1+\sigma}}
\Biggr].
\label{def:semi_norm_sharpsharp}
\end{align}

Now we  give a precise decomposition for the error $\rR$.
\begin{proposition}\label{errorProp2}
Recall that $v_d$ is given by \eqref{eq:Approx2-dimensionalrescaled} and
$$
\mathbb{E}=S(v_d)=\big(iv_d^+\rR^+,iv_d^-\rR^-\big),
\qquad
\rR=(\rR^+, \rR^-).
$$
Then we can write
$$
\rR=\rR_o+\rR_e= \big(\rR_o^+, \rR_o^-\big)+\big(\rR_e^+, \rR_e^-\big),
\qquad
\rR_o=\rR_o^\alpha + \rR_o^\beta= \big(\rR_o^{+, \alpha}, \rR_o^{-, \alpha}\big)+\big(\rR_o^{+, \beta}, \rR_o^{-, \beta}\big),
$$
i.e,
$$
\rR^\pm = \rR^\pm_o+  \rR^\pm_e, \qquad \rR^\pm_o = \rR_o^{\pm, \alpha} +\rR_o^{\pm, \beta},
$$
with $\rR_o, \rR_o^\pm$ defined analogously to \eqref{def-ho}.
Moreover, the symmetry
\(\rR_o(\mathscr{R}_jz) =\overline{\rR_o (z)} \) in \(B_{\mathbb{R}_\e}(e_1)\cup B_{\mathbb{R}_\e}(e_2)\) is true and the following estimates are valid
\begin{align*}
\|\rR_e\|_{**}+\|\rR_o\|_{**}\leq \frac{C}{|\ln \e|},
\qquad
|\rR_o^\alpha|_{\sharp\sharp} & \leq C \frac{\e}{\sqrt{|\ln \e|}},
\qquad
\| \rR_o^\beta\|_{**} \leq C\e\sqrt{|\ln \e|}.
\end{align*}
\end{proposition}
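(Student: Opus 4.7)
I build the splitting directly into the definition using the local Fourier machinery of \eqref{def:h_Fourier}--\eqref{HKJ} and the cutoff gluing \eqref{def-ho}. Concretely, expand each component $\rR^\pm$ in its Fourier series in $\theta_j$ centred at $e_j$ for $j=1,2$, extract the odd-mode part $\rR_{o,j}^\pm$ as in \eqref{Hevenpm}, and set
\[
\rR_o^\pm := \eta_{1,\mathbb{R}_\e}\rR_{o,1}^\pm + \eta_{2,\mathbb{R}_\e}\rR_{o,2}^\pm,\qquad \rR_e := \rR - \rR_o.
\]
Since $|e_1-e_2|=2\tilde d \ge 2\mathbb{R}_\e$, the two cutoffs have essentially disjoint supports, and inside $B_{\mathbb{R}_\e}(e_j)$ only the $j$-th cutoff acts, so $\rR_o=\rR_{o,j}$ is a pure odd Fourier series in $\theta_j$; the representation $\mathscr R_j z = \ell_j e^{i(\pi-\theta_j)}+e_j$ of \eqref{mathscrR} gives at once $\rR_o(\mathscr R_j z) = \overline{\rR_o(z)}$ as required. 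The supremum, Hölder and weighted-decay pieces of $\|\cdot\|_{**}$ in \eqref{def:norm_**} are all preserved under Fourier projection in $\theta_j$ and multiplication by the smooth cutoffs $\eta_{j,\mathbb{R}_\e}$, so Proposition \ref{prop:sizeoferror} yields directly $\|\rR_o\|_{**}+\|\rR_e\|_{**} \le C\|\rR\|_{**} \le C/|\ln\e|$.

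\textbf{Fine splitting $\rR_o=\rR_o^\alpha+\rR_o^\beta$.} I read off each contribution from the explicit formulas \eqref{S0vdpm} for $S_0^\pm(v_d)$ and \eqref{S1vd} for $S_1^\pm(v_d)$, divide by $iv_d^\pm$, expand in $(\ell_j,\theta_j)$ around $e_j$ by writing $z-e_{3-j}=z-e_j+(e_j-e_{3-j})$ and Taylor-expanding $|w_b^\pm|^2$, $(W^\pm)'(\ell_2)$, $\sin\theta_2/\ell_2$, etc.\ using the asymptotics $t^\pm-W^\pm(\ell)\sim c_\pm/(2\ell^2)$ and $(W^\pm)'(\ell)\sim c_\pm/\ell^3$ from Lemma \ref{lemmaofW}, and finally extract the odd Fourier modes in $\theta_j$. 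I then define $\rR_o^\alpha$ to collect exactly the odd Fourier modes of those pieces of $S_1^\pm(v_d)/(iv_d^\pm)$ carrying the pre-factor $\e\hat d/\sqrt{|\ln\e|}$ from \eqref{S1vd}; after division by $v_d^\pm$ the contributions behave like $\sin\theta_j/\ell_j$ and $\cos\theta_j(W^\pm)'(\ell_j)/W^\pm(\ell_j)$, which decay only at rate $\ell_j^{-1}$ (real part) or $\ell_j^{-1+\sigma}$ (imaginary part), matching precisely the semi-norm \eqref{def:semi_norm_sharpsharp} and giving $|\rR_o^\alpha|_{\sharp\sharp}\le C\e/\sqrt{|\ln\e|}$. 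Everything else in the odd part forms $\rR_o^\beta$: namely the odd modes of $S_0^\pm(v_d)/(iv_d^\pm)$, which after Taylor-expanding $|w_b^\pm|^2$ inherit the pre-factor $\tilde d^{-2} = \hat d^{-2}\e^2|\ln\e|$ with $\ell_j^{-2}$-type decay, together with the odd modes of the $\hat d^2/|\ln\e|$ terms in \eqref{S1vd} whose dominant even symmetry leaves only subleading pieces of size at most $\e\sqrt{|\ln\e|}$ with $\ell_j^{-2}$ decay. Summing all these contributions gives $\|\rR_o^\beta\|_{**}\le C\e\sqrt{|\ln\e|}$.

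\textbf{Main difficulty.} The delicate part is the parity classification when the far-vortex coordinates $(\ell_{3-j},\theta_{3-j})$ are expanded around $e_j$: this dependence is not manifestly even or odd in $\theta_j$, and the cancellations that make $\rR_o^\alpha$ simultaneously small (coefficient $\e/\sqrt{|\ln\e|}$) and slowly decaying (rate $\ell_j^{-1}$) rely on the exact matching of the leading $\theta_2$-parities of $W^\pm(\ell_2)$ and $e^{i\theta_2}$ in the expansion of $v_d^\pm$. This bookkeeping closely parallels the single-component treatment in the proof of Proposition~4.2 of \cite{DDMR}; the feature new to the system setting is that the $``+"$ and $``-"$ components decouple at the level of the error and can be handled in parallel, with the constants $c_\pm$ from Lemma \ref{lemmaofW} playing analogous roles. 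The $C^{0,\alpha}$ contributions on the compact annuli $\{\ell_j<3\}$ entering the first term of \eqref{def:norm_**} follow from standard interior Schauder estimates applied to the explicit bounded expressions $v_d^\pm\rR^\pm$ already analysed in Proposition \ref{prop:sizeoferror}.
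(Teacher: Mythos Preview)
Your overall strategy is correct and matches the paper in spirit, but your definition of $\rR_o^\alpha$ is too narrow and creates a genuine gap in the bound for $\rR_o^\beta$. You put into $\rR_o^\alpha$ only the odd modes of the $\e\hat d/\sqrt{|\ln\e|}$-prefactored pieces of $S_1^\pm(v_d)$, and relegate all odd modes of the $\hat d^2/|\ln\e|$-prefactored pieces to $\rR_o^\beta$, asserting that the latter carry $\ell_j^{-2}$ decay. That assertion fails for the cross-terms coming from $\bigl(\tfrac{\cos\theta_1}{\ell_1}-\tfrac{\cos\theta_2}{\ell_2}\bigr)^2 W^\pm(\ell_1)W^\pm(\ell_2)$ (and its companions involving $\ell_2$ in the denominator). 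After dividing by $iv_d^\pm$, the odd-in-$\theta_1$ part of $\tfrac{\hat d^2}{|\ln\e|}\cdot\tfrac{2\cos\theta_1\cos\theta_2}{\ell_1\ell_2}$ behaves like $\tfrac{\hat d^2}{|\ln\e|}\cdot\tfrac{\cos\theta_1}{\ell_1\tilde d}$ near $e_1$: this is an imaginary contribution to $\rR^\pm$ with only $\ell_1^{-1}$ decay, not $\ell_1^{-2}$. Its $\|\cdot\|_{**}$-norm (imaginary weight $\ell_1^{-2+\sigma}$) is of order $\e^\sigma|\ln\e|^{\sigma/2-1}$, which for $\sigma\in(0,1)$ blows up relative to $\e\sqrt{|\ln\e|}$ as $\e\to0$. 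Hence your $\|\rR_o^\beta\|_{**}\le C\e\sqrt{|\ln\e|}$ cannot hold with your splitting.

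The paper repairs this by defining $r_{o,j}^\pm$ to collect \emph{all} slow-decay pieces---both those with prefactor $\e\hat d/\sqrt{|\ln\e|}$ and those with prefactor $\hat d^2/|\ln\e|$ that involve $1/\ell_2$ or $1/\ell_2^2$---and then setting $\rR_o^{\pm,\alpha}$ to be the odd part of $r_{o,j}^\pm$ under $\mathscr R_j$. The point is that $\tfrac{\hat d^2}{|\ln\e|}\cdot\tfrac{1}{\ell_1\tilde d}\sim\tfrac{\e}{\sqrt{|\ln\e|}}\cdot\tfrac{1}{\ell_1}$, so these $\hat d^2/|\ln\e|$ terms in fact sit at exactly the same $|\cdot|_{\sharp\sharp}$ level as the $\e\hat d/\sqrt{|\ln\e|}$ terms and must be grouped with them in $\rR_o^\alpha$; only then does the remainder $\rR_o^\beta$ achieve the genuine $\ell_j^{-2}$ decay needed for $\|\rR_o^\beta\|_{**}\le C\e\sqrt{|\ln\e|}$. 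Your global odd/even setup and the first estimate are fine; it is just the $\alpha/\beta$ split that needs to be widened.
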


 \begin{proof}
  Following Proposition \ref{prop:sizeoferror}, we immediately get that
  \begin{align*}
  \|\rR_e\|_{**}+\|\rR_o\|_{**}\leq \frac{C}{|\ln \e|}.
  \end{align*}
  For every $j\in \{1,2\}$, we define
 \begin{align*}
r_{o,1}^\pm  &:=
-i\frac{{\tilde d}^2}{|\ln \e| (t^\pm)}\Bigg( \frac{2\cos \theta_1 \cos \theta_2}{\ell_1 \ell_2}+\frac{\cos^2\theta_2}{\ell_2^2} \Bigg)
+\Bigg(\frac{{\tilde d} \e}{\sqrt{|\ln \e|} (t^\pm)} \frac{-\sin \theta_1}{\ell_1}-\frac{2{\tilde d}^2}{|\ln \e| (t^\pm)}\frac{\sin \theta_2 \cos \theta_2}{\ell_2^2} \Bigg),
 \\[4mm]
r_{o,2}^\pm &:=
-i\frac{{\tilde d}^2}{|\ln \e| (t^\pm)}\Bigg( \frac{2\cos \theta_1 \cos \theta_2}{\ell_1 \ell_2}+\frac{\cos^2\theta_1}{\ell_1^2} \Bigg)
+\Bigg(\frac{{\tilde d} \e}{\sqrt{|\ln \e|}(t^\pm)} \frac{+\sin \theta_2}{\ell_2}-\frac{2{\tilde d}^2}{|\ln \e| (t^\pm)}\frac{\sin \theta_1 \cos \theta_1}{\ell_1^2} \Bigg),
\end{align*}
\begin{align} \label{rjo}
R_{o, j}^{\pm, \alpha}:= \frac{1}{2}\Big[r_{o,j}^\pm (z)+\overline{r_{o,j}^\pm \left(\mathscr{R}_jz \right)}\Big],
\end{align}
and
\[
R_o^{\alpha}: = (R_o^{+, \alpha}, R_o^{-, \alpha}), \quad \text{with}~
R_o^{\pm, \alpha}:=\eta_{1,\R_\e}  R_{o, 1}^{\pm, \alpha}+\eta_{2,\R_\e} R_{o, 2}^{\pm, \alpha}.
\]
Note that the terms in $r_{o,j}^\pm$ are all slow decay terms.

\medskip
We  claim  that  the  components of functions
$ R_{o, j}^{\pm, \alpha}$   are all  odd mode terms when they lie  locally in domain $B_{\mathbb{R}_\e} (e_j)$.
 In fact, for example,   taking $j=1$,   when  ${|z-e_1 |} \le \R_\e$,  by  rewriting   $r_{o,1}^\pm$ in Fourier series in  $\theta_1$, there hold
\begin{align*}
\frac{2\cos \theta_1 \cos \theta_2}{\ell_1 \ell_2} &=  \frac{2\cos \theta_1 (x_1 + \tilde d) }{\ell_1 \ell_2^2} = \frac{2\cos \theta_1 \cos \theta_1   }{\ell_2^2} + \frac{4\cos \theta_1   \tilde d }{\ell_1 \ell_2^2} \nonumber
\\[1mm] &= \frac1 {\ell_1^2+{ \tilde d}^2 + 4 \tilde d \ell_1 \cos \theta_1} \Big(2\cos^2 \theta_1+ \frac{4\cos \theta_1   \tilde d }{\ell_1} \Big)
\nonumber
\\[1mm] &
=  \sum_{m=0}^{\infty}  a_m (\ell_1) \cos^m {\theta_1},
\end{align*}
and
\begin{align*}
\frac{\cos^2\theta_2}{\ell_2^2} &= \frac {(x_1-\tilde d + 2 \tilde d)^2} {\ell_2^4}
\nonumber
\\[1mm] &
=
 \Big(\frac1 {\ell_1^2+{ \tilde d}^2 + 4 \tilde d \ell_1 \cos \theta_1} \Big)^2 ( \ell_1^2 \cos^2\theta_1 + 2 \tilde d \ell_1 \cos\theta_1+ 4 \tilde d^2)
 \nonumber
\\[1mm] &
=  \sum_{m=0}^{\infty}  b_m (\ell_1) \cos^m {\theta_1},
\end{align*}
\begin{align*}
 \frac{\sin \theta_2 \cos \theta_2}{\ell_2^2} &=\frac{(x_1- \tilde d + 2 \tilde d ) x_2}{\ell_4^2}  =\frac{(x_1- \tilde d ) x_2 + 2 \tilde d x_2}{\ell_2^4}
 \nonumber
\\[1mm] &
  =   \Big(\frac1 {\ell_1^2+{ \tilde d}^2 + 4 \tilde d \ell_1 \cos \theta_1} \Big)^2 ( \ell_1^2 \sin \theta_1 \cos\theta_1  + 2 \tilde d \ell_1  \sin \theta_1)
   \nonumber
\\[1mm] &
  =   \sin \theta_1 \sum_{m=0}^{\infty}  c_m (\rho_1) \cos^m {\theta_1},
 \end{align*}
 where  $a_m, b_m, c_m$ are bounded smooth functions.  Therefore, we  can rewrite $ r_{o,1}^\pm$ as
 \begin{align}  \label{mr01}
 r_{o,1}^\pm (z)  &=
-i\frac{{\tilde d}^2}{|\ln \e| (t^\pm)}\Bigg( \sum_{m=0}^{\infty}  \Big[a_m (\ell_1) \cos^m {\theta_1}+b_m (\ell_1) \cos^m {\theta_1}\Big] \Bigg)
\\[2mm]
 &\quad
+\Bigg(\frac{{\tilde d} \e}{\sqrt{|\ln \e|} (t^\pm)} \frac{-\sin \theta_1}{\ell_1}-\frac{2{\tilde d}^2}{|\ln \e| (t^\pm)}\sin \theta_1 \sum_{m=0}^{\infty}  c_m (\rho_1) \cos^m {\theta_1} \Bigg).
 \end{align}
  By the analysis above and recalling
 \[\mathscr{R}_1 z = \ell_1 e^{i(\pi -\theta_1)}+e_1,  \]  we can easily  show that
  \begin{align}  \label{mr02}
 \overline{r_{o,1}^\pm \left(\mathscr{R}_1z \right)}    &=
-i\frac{{\tilde d}^2}{|\ln \e| (t^\pm)}\Bigg( \sum_{m=0}^{\infty} (-1)^{m+1} \Big[a_m (\ell_1) \cos^m {\theta_1}+b_m (\ell_1) \cos^m {\theta_1}\Big] \Bigg) \nonumber
\\[2mm]
 &\quad
+\Bigg(\frac{{\tilde d} \e}{\sqrt{|\ln \e|} (t^\pm)} \frac{-\sin \theta_1}{\ell_1}-\frac{2{\tilde d}^2}{|\ln \e| (t^\pm)}\sin \theta_1 \sum_{m=0}^{\infty}(-1)^m  c_m (\rho_1) \cos^m {\theta_1} \Bigg).
 \end{align}
 Therefore, from \eqref{rjo},  \eqref{mr01}, and \eqref{mr02}, we know that the claim above is true.
Furthermore, we can  also check that \(R_o^\alpha\) and \(R_o^\beta:=R_o-R_o^\alpha\) satisfy the desired properties.
\end{proof}

\bigskip
 \section{The resolution  of  projected linear problem}\label{section44}
For convenience,
we first introduce some symmetries for the functions $\rH, \psi \in \mathbb{C}^2:$
 \begin{align} \label{s1}
 \rH(\overline{z}) = - \overline{\rH}(z),   \tag{S1}
 \end{align}
 \begin{align}
\label{s2}
\rH(\mathscr{R}_j z ) = - \overline{\rH(z)}  ,\quad
|z-e_j | < 2 \mathbb{R}_\e , \quad j=1,2,\tag{S2}
\end{align}
\begin{align}
\label{s3}
\rH(\mathscr{R}_j z ) =  \overline{\rH(z)}  ,\quad
|z-e_j | < 2 \mathbb{R}_\e , \quad j=1,2,\tag{S3}
\end{align}
  \begin{align} \label{s4}
  \psi(x_1,-x_2)=-\overline{\psi}(x_1,x_2), \ \ \ \ \ \psi(-x_1,x_2)=-\overline{\psi}(x_1,x_2). \tag{S4}
 \end{align}

 From the analysis in  Lemma \ref{Lepsilon}, we know that  the linear operator   $ L^\pm_{d,j}(\Phi) :=  iw^\pm(z- e_j )\mathcal L_\e^\pm (\psi)$    has one
 nontrivial kernel  for  $\psi$ which satisfies the symmetry \eqref{s4} in the vortex region.
 Therefore, in order to solve  the problem \eqref{result}, we should  consider   the  nonlinear projected problem
 for $\psi$ satisfies the symmetry \eqref{s4}

 \begin{align}
\label{eq:linear5.1}
\left\{
\begin{aligned}
& \LLL^\pm_\e(\psi)= \rR^\pm-\mathcal N^\pm(\psi) +  c \sum_{j=1}^2\frac{\chi_j}{iw^\pm(z-e_j )} (-1)^jw^\pm_{x_1}(z-e_j )
\quad\text{in }\R^2,
\\[2mm]
&{\rm{Re}} \int_{B(0,4)} \chi \Big[ \overline{\phi_j^+} w_{x_1}^++ \overline{\phi_j^-} w_{x_1}^- \Big]=0, \text{ with }\phi^\pm_j(z)=iw^\pm(z)\psi^\pm(z+e_j ), \quad j=1, 2,
\\[2mm]
&\psi \text{ satisfies the symmetry }  \eqref{s4},
\end{aligned}
\right.
\end{align}
where
\begin{equation}
\nonumber
\chi(z):=\eta_1 \Big(\frac{|z|}{2} \Big), \qquad \chi_j(z):=\eta_1 \Big(\frac{\ell_j}{2} \Big)=\eta_1 \Big(\frac{|z-e_j |}{2} \Big),
\end{equation}
and $\eta_1$ is a smooth cut-off function defined in \eqref{eta1}.
The resolution theory  of a linear projected  problem will be first
provided for solving \eqref{eq:linear5.1}.
Then an application of the Contraction Mapping Principle will give the existence of solutions.

\medskip
  \subsection{The linear resolution theory}
 The main objective of this part is to set up the resolution  theory of a linear projected  problem.
  For any $\rH$ satisfies \eqref{s4}, we  first  consider  the projected problem
  \begin{align}
\label{eq:linear}
\left\{
\begin{aligned}
& \LLL^\pm_\e(\psi)=\rH^\pm+  c \sum_{j=1}^2\frac{\chi_j}{iw^\pm(z-e_j )} (-1)^jw^\pm_{x_1}(z-e_j )
\quad\text{in }\R^2,
\\[2mm]
&{\rm{Re}} \int_{B(0,4)} \chi \Big[ \overline{\phi_j^+} w_{x_1}^++ \overline{\phi_j^-} w_{x_1}^- \Big]=0, \text{ with }\phi^\pm_j(z)=iw^\pm(z)\psi^\pm(z+e_j),\quad j=1, 2,
\\[2mm]
&\psi \text{ satisfies the symmetry }  \eqref{s4}.
\end{aligned}
\right.
\end{align}

%
   For solving \eqref{eq:linear}, at first we shall get a priori estimates  expressed in  suitable norms.
   Define for $\psi=(\psi^+, \psi^-):\mathbb R^2\to{\mathbb C}^2$ the norms for fixed small $\alpha>0, \sigma>0$,
   \begin{align}\label{norm*1}
\|\psi\|_* =   \|\psi^+\|_{*, 1}   + \|\psi^-\|_{*, 1},
\end{align}
 \begin{align}\label{normstar1}
\|\psi^\pm\|_{*,1} = \sum_{j=1}^2 \| v_d^\pm \psi^\pm \|_{C^{2,\alpha}(\ell_j<3)} + \| {\rm{Re}}(\psi^\pm) \|_{*, 1, {Re}} + \| {\rm{Im}}(\psi^\pm) \|_{*, 1, {Im}},
\end{align}
where, by the relations
$$
{\rm{Re}}(\psi^\pm)=\psi^\pm_1,\quad {\rm{Im}}(\psi^\pm)=\psi^\pm_2,
$$
we have denoted
\begin{align}
\|\psi_1^\pm\|_{*, 1, Re}
&=
\sup_{\ell_1>2,\ell_2>2}
|\psi_1^\pm|
\ +\ \sup_{2<\ell_1<\frac{2}{\e} , \, 2<\ell_2<\frac{2}{\e}}
\frac{|\nabla \psi_1^\pm| }{\ell_1^{-1}+\ell_2^{-1}}
\ +\ \sup_{r>\frac{1}{\e}}
\Biggl[
\frac{1}{\e}
|\partial_r \psi_1^\pm|
\,+\, |\partial_s \psi_1^\pm|
\Biggr]
\nonumber
\\[2mm]
& \qquad
\ +\ \sup_{2<\ell_1<\mathbb{R}_\e , \,
2<\ell_2< \mathbb{R}_\e }
\frac{|D^2 \psi_1^\pm| }{\ell_1^{-2}+\ell_2^{-2}}
\ +\ \sup_{2<\ell_1<\mathbb{R}_\e , \,
2<\ell_2<\mathbb{R}_\e}
\frac{[D^2\psi_1^\pm]_{\alpha,B_{|z|/2}(z)}}{ \ell_1^{-2-\alpha} + \ell_2^{-2-\alpha}},
\\[4mm]
\|\psi_2^\pm\|_{*, 1, Im}
&=
\sup_{\ell_1>2,\ell_2>2}
\frac{|\psi_2^\pm|}{\ell_1^{-2+\sigma}+\ell_2^{-2+\sigma}+\e^{\sigma-2}}
\ +\ \sup_{2<\ell_1<\frac{2}{\e} , \, 2<\ell_2<\frac{2}{\e}}
\frac{|\nabla \psi_2^\pm| }{\ell_1^{-2+\sigma}+\ell_2^{-2+\sigma}}  \nonumber
\\[2mm]
& \qquad
\ +\ \sup_{r>\frac{1}{\e}}
\Big[
\e^{\sigma-2}|\partial_r \psi_2^\pm|
+\e^{\sigma-1}|\partial_s \psi_2^\pm|
\Big] \nonumber
\\
& \qquad
\ +\ \sup_{2<\ell_1<\mathbb{R}_\e ,
 \, 2<\ell_2< \mathbb{R}_\e }
\frac{|D^2 \psi_2^\pm| }{\ell_1^{-2+\sigma}+\ell_2^{-2+\sigma}}
\ +\ \sup_{2<\ell_1<\mathbb{R}_\e , \,
2<\ell_2<\mathbb{R}_\e}
\frac{[D^2\psi_2^\pm]_{\alpha,B_{1}(z)}}{  \ell_1^{-2+\sigma} +  \ell_2^{-2+\sigma}}.
\label{norm*4}
\end{align}

   We next give the   solvability of   linear projected problem \eqref{eq:linear}.

   \begin{proposition}\label{prop:linearfull}
   There exists a constant $C>0$ depending only on  $\alpha, \sigma \in(0,1)$, such that  the following hold:
if $\rH$ satisfies \eqref{s4} and
$$
\| \rH\|_{**}<+\infty,
$$
then for $\e>0$ sufficiently small there exists a unique solution $ T_\e(\rH) = \big(T_\e^1(\rH), T_\e^2(\rH) \big)= (\psi_\epsilon,  c_\e) $ to \eqref{eq:linear}.
Furthermore, there  holds
\begin{equation*}
\|\psi_\e\|_*\leq C\|\rH\|_{**}.
\end{equation*}
\end{proposition}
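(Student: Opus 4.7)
The plan is to follow the by-now-standard contradiction/blow-up scheme for Ginzburg-Landau linearized problems (as in \cite{delPinoKowalczykMusso2006,DDMR}), adapted to the two-component setting using the non-degeneracy Lemma \ref{nodegeneracy-theorem1.1} and the positive-definiteness coming from hypothesis \eqref{H1}. First I would establish the \emph{a priori} estimate
\[
\|\psi\|_* \le C\|\rH\|_{**}
\]
for any solution $(\psi,c)$ of \eqref{eq:linear} with $\psi$ satisfying \eqref{s4}. Suppose by contradiction that there exist sequences $\e_n\to 0$, $(\psi_n,c_n)$ solving \eqref{eq:linear} with data $\rH_n$ such that $\|\psi_n\|_*=1$ and $\|\rH_n\|_{**}\to 0$. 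Testing the equation against $\overline{\chi(z-e_j)w^\pm_{x_1}(z-e_j)}$ and using the orthogonality condition together with the decay in the definition of $\|\cdot\|_*$ gives $|c_n|\to 0$, so the Lagrange multiplier is asymptotically negligible.

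Next, I would carry out the blow-up analysis in the three relevant regions. \textbf{Inner region:} In balls of fixed radius around each vortex $e_j$, set $\Phi_{n,j}^\pm(z)=iw^\pm(z)\psi_n^\pm(z+e_j)$. From Lemma \ref{Lepsilon} the operator $L_{d,j}$ is a small perturbation of the full linearized Ginzburg-Landau operator $L^0$ around the standard degree-pair $(+1,+1)$ vortex. By elliptic regularity a subsequence converges in $C^{2,\alpha}_{\rm loc}$ to some $\Phi_\infty=(\Phi_\infty^+,\Phi_\infty^-)$ solving $L^0(\Phi_\infty)=0$. The decay encoded in $\|\cdot\|_*$ puts $\Phi_\infty$ in the Hilbert space $\mathscr H$, so Lemma \ref{nodegeneracy-theorem1.1} yields $\Phi_\infty = c_1 w_{x_1}+c_2 w_{x_2}$; the symmetry \eqref{s4} kills $c_2$ and the orthogonality condition in \eqref{eq:linear} kills $c_1$, so $\Phi_\infty\equiv 0$. \textbf{Outer region:} Away from the vortices, writing $\psi^\pm=\psi_1^\pm+i\psi_2^\pm$, the system $\mathcal L_\e^\pm\psi=\rH^\pm$ decouples at leading order into a scalar Laplace-type equation for the real parts and a coupled elliptic system for the imaginary parts of the form
\[
\begin{pmatrix} \Delta \psi_2^+ \\ \Delta \psi_2^- \end{pmatrix}
\;-\; 2\begin{pmatrix} A_+|v_d^+|^2 & B|v_d^-|^2 \\ B|v_d^+|^2 & A_-|v_d^-|^2 \end{pmatrix}
\begin{pmatrix}\psi_2^+ \\ \psi_2^-\end{pmatrix} \;\approx\; -{\rm Re}\,\rH^\pm,
\]
and by \eqref{H1} the coefficient matrix is positive definite uniformly away from the vortices, allowing a maximum-principle/barrier argument with the weight $\ell_1^{-2+\sigma}+\ell_2^{-2+\sigma}+\e^{2-\sigma}$ to control $\psi_2^\pm$, while the real parts are controlled by comparison with $\log$-type barriers and the $\e^2$-zero-order-term in $\tilde L_1^\pm$. \textbf{Far region} ($r>\e^{-1}$): the screw terms $-4i\e^2\partial_s\psi^\pm-4\e^2\psi^\pm$ dominate and give the weighted bounds on $\partial_r\psi^\pm,\partial_s\psi^\pm$ built into \eqref{norm*4}. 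Combining the three regions contradicts $\|\psi_n\|_*=1$.

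With the a priori estimate in hand, existence and uniqueness follow from a standard Fredholm-type argument. I would set up the problem in the Hilbert space $\mathscr H_\e$ naturally associated with the energy (restricted to functions satisfying \eqref{s4}) and rewrite \eqref{eq:linear} as $\psi = \mathcal K_\e(\psi) + G_\e(\rH)$ with $\mathcal K_\e$ compact, where the projection onto the orthogonal complement of $\mathrm{span}\{\chi_j w^\pm_{x_1}(\cdot-e_j)\}_{j=1,2}$ takes care of the Lagrange multiplier $c$. The a priori estimate implies that the homogeneous problem has only the trivial solution, so Fredholm's alternative yields solvability for every right-hand side, and the bound $\|\psi_\e\|_*\le C\|\rH\|_{**}$ is precisely the a priori estimate already proved. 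Uniqueness is likewise immediate from that estimate.

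The main obstacle will be the blow-up analysis in the outer region: in contrast to the scalar case treated in \cite{DDMR}, the imaginary parts $(\psi_2^+,\psi_2^-)$ satisfy a genuinely \emph{coupled} elliptic system, and one must verify that the coupling does not destroy the weighted a priori bounds. This is precisely where the sign condition $B<0$ combined with $B^2<A_+A_-$ is used: it ensures that the coefficient matrix above is not only positive definite but admits a diagonalization with positive eigenvalues bounded below away from the vortex cores, allowing the construction of supersolutions component by component. The non-degeneracy Lemma \ref{nodegeneracy-theorem1.1}, which requires exactly this sign restriction on $B$, plays the analogous role in the inner blow-up limit.
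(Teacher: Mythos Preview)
Your approach is essentially the same as the paper's: contradiction/blow-up for the a~priori estimate, diagonalization of the imaginary-part system via the positive-definite matrix from \eqref{H1}, and Fredholm on bounded domains for existence. One subtle point to watch in the inner blow-up: the bounds from $\|\cdot\|_*$ give only $|\psi_1^\pm|\le C$ (no decay) and $|\psi_2^\pm|\lesssim \ell^{-2+\sigma}$, so the limit $\Phi_\infty=iw\psi_\infty$ need not lie in $\mathscr H$ directly (since $|\nabla w^\pm|\sim 1/r$, the term $\int|(\nabla w^\pm)\psi_1^\pm|^2$ may diverge). The paper does not invoke Lemma~\ref{nodegeneracy-theorem1.1} straight away but passes through the intermediate Lemmas~\ref{lem:ellipticestimatesL0}--\ref{lem:ellipticestimatesL0-b}, which first bootstrap the decay of $\psi_1^\pm$ before reducing to the $\mathscr H$-nondegeneracy. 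Two smaller inaccuracies: the operator $\tilde L_1^\pm$ carries no zero-order term $-4\e^2\psi^\pm$ (that term was absorbed into $S_1(v_d)$), and for this proposition the real parts are controlled by \emph{bounded} barriers---the log-type barriers you mention enter only later, in the sharper estimate of Proposition~\ref{prop:sharp2b}.
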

\begin{proof}
We delay the proof of this proposition to \S\ \ref{section55.1}.
\end{proof}

\medskip
        \subsection{More precise estimates  and decompositions  for $\psi$}
\label{section4.2}
  The resolution theory  of \eqref{eq:linear} provided  in Proposition \ref{prop:linearfull}
implies that   we can find   a solution of \eqref{eq:linear5.1} in the region
  \begin{equation*}\begin{split}
\mathcal{A}:=\Bigg\{& \psi:\; \psi \text{ satisfies } \eqref{s4}, \ {\rm{Re}} \int_{B(0,4)} \chi \big[ \overline{\phi_j^+} w_{x_1}^++ \overline{\phi_j^-} w_{x_1}^- \big]=0, \;\;j=1,2, \; \text{and} \;\|\psi\|_*\leq\frac{C}{|\ln \e|} \Bigg\}.
\end{split}\end{equation*}
In fact,  from  Proposition \ref{prop:linearfull},   the problem \eqref{eq:linear5.1}  is equivalent to the following fixed point problem
 \begin{equation}\nonumber
\psi=T_\e\Big(\rR+\mathcal N(\psi) \Big)=: \mathbb{G}_\e(\psi).
\end{equation}
The existence of $\psi$ to \eqref{eq:linear5.1} follows by  the Contraction Mapping  Principle, see Proposition \ref{proposition61} in Section \ref{section6}  for more details.

\medskip
Next, to find a real solution of \eqref{equation-for-psi}, we need to  solve the reduced problem by finding a suitable $\hat d$ such that the  multiplier $c$ in \eqref{eq:linear5.1} is identical zero  when  $\e$ is sufficiently  small.
However,   since  the estimates for $\psi$ are not   delicate enough,  we can not carry out reduction procedure such that $c$ is zero.
In order to get around the technical difficulty,  we  are led to using the idea  doing more precise  decompositions  and estimates for  the perturbation $\psi$.
This   was first   introduced by J. D\'avila, M. del Pino, M. Medina and R. Rodiac in     \cite{DDMR} and \cite{DDMR1}
 to construct vortex helical filaments of classical (single complex component) Ginzburg-Landau equation   and Gross-Pitaevskii equation.

%
 For $\psi = \big(\psi^+, \psi^- \big)\in \mathbb{C}^2 $ satisfying \eqref{s4}, we
 decompose \(\psi\) in Fourier series in $\theta_j$ as in \eqref{def:h_Fourier}
and define
\begin{align*}
\psi_{e,j}^\pm= \sum_{k \text{ even}} \psi^\pm_{k,j},
\qquad
\psi_{o,j}^\pm = \sum_{k \text{ odd}} \psi_{k,j}^\pm, \ \ j=1, 2.
\end{align*}
Similar to the definitions of  $\rH_o$ and $\rH_e$ in Section \ref{section Error estimate of Approximation}, we define
\begin{align}
&\psi_o = (\psi_o^+, \psi_o^-),
\quad \text{with}\ \
\psi^\pm_o : =    \eta_{1,\frac{1}{2}\mathbb{R}_\e} \psi_{o, 1}^\pm
+ \eta_{2,\frac{1}{2}\mathbb{R}_\e}  \psi_{o, 2}^\pm,
\label{def-psio}
\\[4mm]
& \psi_e =  (\psi_e^+, \psi_e^-), \quad \text{with}\quad  \psi^\pm_e : = \psi^\pm - \psi^\pm_o,
\label{def-psie}
\end{align}
where $\eta_{1,\frac{1}{2}\mathbb{R}_\e}$ and $\eta_{2,\frac{1}{2}\mathbb{R}_\e}$ are two cut-off functions given by the form in \eqref{cutoff}.

\medskip
We recall some facts about the error in the form
$$
\rR=(\rR^+, \rR^-)=\Bigg(\frac{S^+(v_d)}{iv_d^+}, \frac{S^-(v_d)}{iv_d^-}\Bigg).
$$
There are some odd Fourier modes  terms which  have   good size but decay slowly.
Therefore, we have introduced the semi-norm $|\cdot|_{\sharp\sharp} $  in \eqref{sharpsharpnorm} for  these slow decay terms.
As we have stated, by taking   $\rH^\pm =-\rR^\pm-\mathcal N^\pm(\psi)$  in  Proposition \ref{prop:linearfull} we can solve the nonlinear projected problem \eqref{eq:linear5.1} in Proposition \ref{proposition61}.  And then, in order to  carry out  the reduction process,  we will establish  Proposition \ref{prop:sharp2b}.
To prove   Proposition \ref{prop:sharp2b},  we would have to    solve  some problems like $\Delta \psi_1^\pm \approx  O\Big( \frac 1{\ell_1}\Big)$ and thus the functions $\psi_1^\pm$ would grow logarithmically up to a certain distance, see      Lemma  \ref{prioriestimate2} and Lemma \ref{lem:estimates-3b} for more  explanations.
As a   result of these, we  need   the following semi-norm   to capture  the  behaviors of $\psi =\big(\psi^+, \psi^- \big)= \big(\psi^+_1 +i \psi^+_2,  \psi^-_1 +i \psi^-_2 \big) $:
 \begin{align}\label{sharpnorm}
| \psi |_\sharp &=  | \psi^+|_{\sharp, 1}   + | \psi^-|_{\sharp, 1}
\end{align}
with
 \begin{align}\label{sharp_1}
| \psi^\pm |_{\sharp, 1} &=
\sum_{j=1}^2
|\log\e|^{-1}
\| V_d \psi^\pm \|_{C^{2,\alpha}(\ell_j<3)}
+ |\psi_1^\pm|_{\sharp,1, Re}+ |\psi_2^\pm|_{\sharp, 1, Im} ,
\end{align}
where, by the relations
$$
{\rm{Re}}(\psi^\pm)=\psi^\pm_1,\quad {\rm{Im}}(\psi^\pm)=\psi^\pm_2,
$$
we have denoted
\begin{align}
\label{normSharp1}
|\psi_1^\pm|_{\sharp, 1, Re}
& =
\sup_{2<\ell_j < \mathbb{R}_\e, j=1,2}
\Biggl[
\frac{|\psi_1^\pm|}{\ell_1 \log(2\mathbb{R}_\e/\ell_1)+ \ell_2 \log(2\mathbb{R}_\e/\ell_2)}
\ +\
\frac{|\nabla \psi_1^\pm|}{ \log(2\mathbb{R}_\e/\ell_1)+  \log(2\mathbb{R}_\e/\ell_2)}
\Biggr],
\end{align}
and
\begin{align}
\label{normSharp2}
|\psi_2^\pm|_{\sharp, 1, Im}
&=
\sup_{2<\ell_j < \mathbb{R}_\e, j=1,2}
\Biggl[
\frac{|\psi_2^\pm|+|\nabla \psi_2^\pm|}{\ell_1^{-1+\sigma} + \ell_2^{-1+\sigma}+\ell_1^{-1} \log(2\mathbb{R}_\e/\ell_1) + \ell_2^{-1} \log(2\mathbb{R}_\e/\ell_2)}
\Biggr].
\end{align}

We recall the semi-norm $|\cdot|_{\sharp \sharp} $ in \eqref{sharpsharpnorm}
and the reflection mapping   $\mathscr{R}_j z$ in \eqref{mathscrR}.
We finally give  a crucial proposition in this paper. In some sense, it is the ``heart" of the present paper.

\begin{proposition}
\label{prop:sharp2b}
Suppose that $\rH$ satisfies the symmetries \eqref{s4} and  $\|\rH \|_{**}<\infty$.
$\rH_o$ is the function  defined  in \eqref{def-ho}  and it can be  decomposed as
$$
\rH_o =\rH_o^\alpha + \rH_o^\beta,
$$
where $|  \rH_o^\alpha |_{\sharp\sharp}<\infty$, and the terms $\rH_o^\alpha$, $\rH_o^\beta$ satisfy \eqref{s3}, i.e,
\begin{align}
\nonumber
\rH_o^\kappa (\mathscr{R}_j z ) =  \overline{\rH_o^\kappa(z)}  ,\quad
|z-e_j | <  \mathbb{R}_\e, \quad j=1,2 \quad \text{and} \quad  \kappa =\alpha,\beta,
\end{align}
and have supports in $B_{2\mathbb{R}_\e}(e_1) \cup B_{2\mathbb{R}_\e}(e_2) $.
Then for any solution $\psi = \psi_e + \psi_o$ of \eqref{eq:linear}   with  $\psi_o$ defined by \eqref{def-psio}, $\psi_o$ can be decomposed as
$$
\psi_o = \psi_o^\alpha + \psi_o^\beta,
$$
where   $\psi_o^\kappa$   are supported in $B_{\mathbb{R}_\e}(e _1) \cup B_{\mathbb{R}_\e}(e _2)$ with $\kappa =\alpha, \beta$.
Furthermore,  the following estimates     hold
\begin{align*}
|\psi^\alpha_o|_\sharp
& \lesssim
|\rH^\alpha_o |_{\sharp\sharp}
+ \e | \log\e|^\frac{1}{2} ( \|\rH^\alpha_o\|_{**}
+
\|  \rH - \rH_o\|_{**} ),
\\[2mm]
\| \psi^\beta_o \|_*
&\lesssim
 \|\rH_o^\beta\|_{**},
 \\[2mm]
 \|\psi^\alpha_o\|_*+\|\psi^\beta_o\|_* & \lesssim \|\rH\|_{**}+\|\rH^\alpha_o\|_{**}+\|\rH^\beta_o\|_{**},\nonumber
\end{align*}
and
\begin{align}
\nonumber
\psi_o^\kappa(\mathcal{R}_j z ) =  \overline{\psi_o^\kappa (z)}  ,\quad
|z-e_j | <  \mathbb{R}_\e, \quad j=1,2,\ \kappa=\alpha,\beta.
\end{align}
\end{proposition}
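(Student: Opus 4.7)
The plan is to exploit the linearity of \eqref{eq:linear} and the fact that odd symmetries around each vortex are preserved by the linear operator $\LLL_\e$. Given the existing solution $\psi$ (whose existence and $\|\cdot\|_*$-bound are provided by Proposition \ref{prop:linearfull}), I will \emph{define} $\psi_o^\beta$ by solving \eqref{eq:linear} with right-hand side $\rH_o^\beta$ (appropriately cut off to $B_{2\mathbb{R}_\e}(e_1)\cup B_{2\mathbb{R}_\e}(e_2)$), and set $\psi_o^\alpha := \psi_o - \psi_o^\beta$, which then solves a linear problem with right-hand side built from $\rH_o^\alpha$ plus interaction terms generated by the cutoffs. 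The supports in $B_{\mathbb{R}_\e}(e_1)\cup B_{\mathbb{R}_\e}(e_2)$ follow by multiplying by the cutoffs $\eta_{j,\frac{1}{2}\mathbb{R}_\e}$ used in the definition \eqref{def-psio} of $\psi_o$; the $\overline{\,\cdot\,}(\mathscr{R}_j z)$ symmetry transfers from $\rH_o^\kappa$ to $\psi_o^\kappa$ since the operator $\LLL_\e$, the building blocks $v_d^\pm$, and the orthogonality conditions are all compatible with \eqref{s3}.

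For $\psi_o^\beta$ the bound $\|\psi_o^\beta\|_*\lesssim\|\rH_o^\beta\|_{**}$ is immediate from Proposition \ref{prop:linearfull}. The combined $\|\cdot\|_*$ bound for $\psi_o^\alpha+\psi_o^\beta$ then follows from the triangle inequality together with the fact that $\rH-\rH_o^\alpha$ decomposes into $\rH_o^\beta$ plus an even piece, each controlled in $\|\cdot\|_{**}$ by $\|\rH\|_{**}$.

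The heart of the matter is the $|\cdot|_\sharp$ estimate for $\psi_o^\alpha$, and this is where I expect the main obstacle. The semi-norm $|\cdot|_{\sharp\sharp}$ measures data decaying only like $\ell_j^{-1}$ (respectively $\ell_j^{-1+\sigma}$ for the imaginary part), so standard elliptic estimates in $\|\cdot\|_*$ would lose control; instead, solutions to $\Delta u\sim\ell_j^{-1}$ grow like $\ell_j\log(2\mathbb{R}_\e/\ell_j)$, which is precisely the weight appearing in \eqref{normSharp1}–\eqref{normSharp2}. The strategy is to work locally in each ball $B_{\mathbb{R}_\e}(e_j)$ after applying the conjugation $\phi^\pm_j=iw^\pm(z-e_j)\psi^\pm$, use Lemma \ref{Lepsilon} to reduce $L_{d,j}^\pm$ to a small perturbation of the decoupled Ginzburg-Landau linearization $L^0$ around the standard $(+1,+1)$ vortex, and then expand $\psi^\pm_o$ in Fourier modes in $\theta_j$ as in \eqref{def:h_Fourier}. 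Mode by mode, the system becomes a coupled pair of ODEs in $\ell_j$ for the real/imaginary Fourier coefficients, for which explicit barriers of sizes $\ell_j\log(2\mathbb{R}_\e/\ell_j)$ (real part) and $\ell_j^{-1+\sigma}+\ell_j^{-1}\log(2\mathbb{R}_\e/\ell_j)$ (imaginary part) can be constructed using variation of parameters against a fundamental system of $L^0$ on each mode; the kernel mode $k=1$ is excluded by the orthogonality condition in \eqref{eq:linear}, so invertibility is guaranteed. The non-degeneracy result Lemma \ref{nodegeneracy-theorem1.1} and the spectral analysis in Lemmas \ref{lem:ellipticestimatesL0}–\ref{lem:ellipticestimatesL0-b} supply the required decay of the Green's function for the remaining modes.

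The remaining term $\e|\log\e|^{1/2}\bigl(\|\rH_o^\alpha\|_{**}+\|\rH-\rH_o\|_{**}\bigr)$ in the $|\cdot|_\sharp$ estimate has two origins and represents the subtlest point of the argument. First, the commutators $[\LLL_\e,\eta_{j,\frac{1}{2}\mathbb{R}_\e}]$ produce source terms supported near $\partial B_{\mathbb{R}_\e}(e_j)$; since on this sphere the weight in $\|\cdot\|_*$ differs from that in $|\cdot|_\sharp$ by a factor $\mathbb{R}_\e\log 1=O(\e^{-1}|\log\e|^{-1/2})\cdot O(\log\e)$ handled by the $O(\e|\log\e|^{1/2})$ coupling, the error arising from localization is absorbed at the claimed rate. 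Second, the outer contribution from $\rH-\rH_o$, coupled through the $\tilde\eta$-cutoff in $\mathcal{L}_\e^\pm$ and through $\tfrac{\nabla v_d^\pm\nabla\psi^\pm}{v_d^\pm}$ near $\partial B_{\mathbb{R}_\e}(e_j)$, is treated by using $\|\psi\|_*\lesssim\|\rH\|_{**}$ on the boundary layer and then invoking interior Schauder estimates on the ODE modes. The hard step is pushing all these pointwise/Hölder estimates through the weighted semi-norm $|\cdot|_\sharp$ uniformly in the mode index, so that the logarithm factors do not accumulate; this is done mode-by-mode, separating the slowly decaying component (captured by $|\rH_o^\alpha|_{\sharp\sharp}$) from the faster-decaying remainder (controlled by $\|\cdot\|_{**}$), mirroring the Fourier analysis carried out for the classical Ginzburg-Landau equation in \cite{DDMR}.
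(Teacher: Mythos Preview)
Your overall scaffold---split by linearity, use Proposition~\ref{prop:linearfull} for the $\|\cdot\|_*$ bound on $\psi_o^\beta$, and run a weighted barrier/Fourier argument for the $|\cdot|_\sharp$ bound on $\psi_o^\alpha$---matches the paper's strategy. But there is a genuine gap in your symmetry argument, and it propagates into a misidentification of where the $\e|\log\e|^{1/2}$ error comes from.

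You assert that ``the operator $\LLL_\e$, the building blocks $v_d^\pm$, and the orthogonality conditions are all compatible with \eqref{s3}.'' This is false. The reflection $\mathscr{R}_1$ around $e_1=\tilde d$ sends $z-\tilde d\mapsto -\overline{(z-\tilde d)}$ (so $w_a^\pm$ behaves well), but it sends $z+\tilde d\mapsto 3\tilde d - x_1 + ix_2$, which is neither $z+\tilde d$ nor $-\overline{z+\tilde d}$. Hence every coefficient in $\LLL_\e^\pm$ that involves the second vortex---namely $|v_d^\pm|^2=(t^\pm)^{-2}|w_a^\pm|^2|w_b^\pm|^2$, the transport term $\frac{\nabla v_d^\pm}{v_d^\pm}\cdot\nabla$, and $\frac{\partial_s v_d^\pm}{v_d^\pm}$---breaks \eqref{s3} \emph{throughout} $B_{\mathbb{R}_\e}(e_1)$, not merely near its boundary. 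Consequently, solving $\LLL_\e\psi=\rH$ with $\rH$ satisfying \eqref{s3} does \emph{not} yield a $\psi$ satisfying \eqref{s3}; odd data produce a solution with a nontrivial even component, and conversely even data $\rH_e=\rH-\rH_o$ produce a solution with a nontrivial odd component. It is precisely this leakage that generates the term $\e|\log\e|^{1/2}\|\rH-\rH_o\|_{**}$ in the estimate for $|\psi_o^\alpha|_\sharp$, and it is not captured by the cutoff commutators you describe.

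The paper resolves this via Lemma~\ref{decompose and priori}: one splits $\LLL_\e^\pm=\LLL_{\e,s,j}^\pm+\LLL_{\e,r,j}^\pm$, where $\LLL_{\e,s,j}^\pm$ retains only the $w_a^\pm$-dependent coefficients (so it \emph{does} preserve the local reflection symmetry), and $\LLL_{\e,r,j}^\pm$ collects all $w_b^\pm$-contributions and the $\e^2$-perturbative terms; one checks that $\LLL_{\e,r,j}^\pm$ applied to a function with $\|\cdot\|_*\lesssim\|\rH\|_{**}$ produces a source with $|\cdot|_{\sharp\sharp}\lesssim\e|\log\e|^{1/2}\|\rH\|_{**}$. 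Solving first with $\LLL_{\e,s,j}$ isolates the symmetric part $\psi_s$, and the remainder $\psi_*$ then satisfies a problem whose right-hand side is small in $|\cdot|_{\sharp\sharp}$, so Lemma~\ref{prioriestimate2} delivers the claimed $|\cdot|_\sharp$ bound. Your barrier/mode-by-mode plan is essentially the content of Lemma~\ref{prioriestimate2}, but without the operator splitting of Lemma~\ref{decompose and priori} you cannot justify that the solution with even data $\rH_e$ has small odd part, and hence cannot close the estimate for $\psi_o^\alpha$.
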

\begin{proof}
The proof of Proposition~\ref{prop:sharp2b}  will be provided   in  \S\ \ref{sec:prop2}.
\end{proof}

\bigskip

  \section{proof of linear theory}\label{section55}
  \subsection{Proof of Proposition \ref{prop:linearfull} }\label{section55.1}
 We now consider the following: for given $\rH$ satisfying \eqref{s4}, find $\psi$ such that
%
%
%
 \begin{align}
\label{eq:linearhomogeneous}
\left\{
\begin{aligned}
& \LLL^\pm_\e(\psi)=\rH^\pm
\quad\text{in }\R^2,
\\[2mm]
&{\rm{Re}} \int_{B(0,4)} \chi \big[ \overline{\phi_j^+} w_{x_1}^++ \overline{\phi_j^-} w_{x_1}^- \big]=0, \text{ with }\phi^\pm_j(z)=iw^\pm(z)\psi^\pm(z+e_j ),
\\[2mm]
&\psi \text{ satisfies the symmetry }  \eqref{s4}.
\end{aligned}
\right.
\end{align}
Recall the definitions of norms $\|\cdot\|_*$ in \eqref{norm*1}-\eqref{norm*4} and $\|\cdot\|_{**}$ in \eqref{def:norm_**0}-\eqref{def:norm_**}.
We shall first give a priori estimate  for the problem \eqref{eq:linearhomogeneous}.
  \begin{lemma}\label{FirstEstimate}
There exists a constant $C>0$, depending only on $\alpha\in (0,1), \sigma\in (0,1)$,   such that for all $\e$ sufficiently small and any solution $\psi$ of \eqref{eq:linearhomogeneous} with $\|\psi\|_*<\infty$ one has
\begin{align}
\label{est0a}
\|\psi\|_*\leq C\|\rH\|_{**}.
\end{align}
\end{lemma}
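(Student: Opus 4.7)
The plan is to argue by contradiction in the spirit of a blow--up/compactness scheme, exploiting the nondegeneracy result of Lemma~\ref{nodegeneracy-theorem1.1} near each vortex and the positive definiteness coming from \eqref{H1} away from the vortices. Suppose the estimate fails: then there are sequences $\e_n\to 0$, $\rH_n$ satisfying \eqref{s4} with $\|\rH_n\|_{**}\to 0$, and solutions $\psi_n$ of the corresponding problem \eqref{eq:linearhomogeneous} normalized by $\|\psi_n\|_*=1$. We wish to show $\|\psi_n\|_*\to 0$, which is the contradiction.

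First, I would treat the \emph{inner region} near each vortex $e_j$, $j=1,2$. Translate by setting $\tilde\psi_n^\pm(z):=\psi_n^\pm(z+e_j)$ and $\Phi_{n,j}^\pm(z):=iw^\pm(z)\tilde\psi_n^\pm(z)$. The rewriting in \S\ref{section3.3}, together with the smallness estimates at the end of the proof of Lemma~\ref{Lepsilon}, shows that $L_{d,j}\Phi_{n,j}$ is a perturbation of $L^0\Phi_{n,j}$ of order $O(\e^2|\ln\e|)$ on compact sets. The $C^{2,\alpha}$ component of $\|\cdot\|_*$ and elliptic regularity allow us to extract a local limit $\Phi_\infty=(\Phi_\infty^+,\Phi_\infty^-)$ satisfying $L^0(\Phi_\infty)=0$. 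The Hilbert--space framework $\mathscr H$ of Lemma~\ref{nodegeneracy-theorem1.1} is guaranteed by the decay coded in $\|\cdot\|_*$ for $\ell_j\to\infty$ (in particular $\sigma>0$ for the imaginary part), possibly supplemented by the symmetry/decay lemmas announced in the Appendix. Hence $\Phi_\infty\in\operatorname{span}\{\partial_{x_1}w,\partial_{x_2}w\}$; the symmetry \eqref{s4} kills the $\partial_{x_2}w$ direction, and the orthogonality condition in \eqref{eq:linearhomogeneous} kills the $\partial_{x_1}w$ direction. So $\Phi_\infty\equiv 0$, giving $\psi_n\to 0$ uniformly on compact neighborhoods of each $e_j$.

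Next I would treat the \emph{outer region} $\ell_1>2,\ \ell_2>2$. There the operator $\mathcal L_\e$ effectively acts as
\[
\mathcal L_\e^\pm \psi \approx \Delta\psi^\pm+\tfrac{\nabla v_d^\pm\cdot\nabla\psi^\pm}{v_d^\pm}
-2iA_\pm|v_d^\pm|^2\operatorname{Im}\psi^\pm-2iB|v_d^\mp|^2\operatorname{Im}\psi^\mp
+\e^2(\partial_{ss}^2-4i\partial_s)\psi^\pm,
\]
which I split into real and imaginary parts $\psi^\pm=\psi_1^\pm+i\psi_2^\pm$. The imaginary part satisfies a coupled elliptic system whose zeroth order part is
\[
\begin{pmatrix}2A_+|v_d^+|^2 & 2B|v_d^-|^2\\ 2B|v_d^+|^2 & 2A_-|v_d^-|^2\end{pmatrix}\!\begin{pmatrix}\psi_2^+\\ \psi_2^-\end{pmatrix}.
\]
The hypothesis \eqref{H1} makes this matrix uniformly positive definite on the outer region (where $|v_d^\pm|\sim t^\pm$), so a weighted maximum principle / barrier argument with barriers of the form $\ell_1^{-2+\sigma}+\ell_2^{-2+\sigma}+\e^{\sigma-2}$ yields the pointwise bound on $\psi_2^\pm$ contained in $\|\cdot\|_*$ by $o(1)+C\|\rH\|_{**}$. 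Plugging this control on $\psi_2^\pm$ into the equation for $\psi_1^\pm$ reduces the real part to a Poisson--type problem $\Delta\psi_1^\pm+\text{l.o.t.}=\rH_1^\pm+(\text{terms driven by }\psi_2)$, whose weight decaying like $\ell_1^{-2}+\ell_2^{-2}$ on the right--hand side and $L^\infty$ control of $\psi_1^\pm$ are obtained by integrating against the Green function on the ball $B_{2/\e}$ and using Schauder estimates for the gradient and $D^2$ pieces of $\|\cdot\|_*$. Interior $C^{2,\alpha}$ estimates then upgrade the decay bounds to the full norm $\|\cdot\|_*$.

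Finally I would glue the inner and outer estimates. On the overlap annuli $2<\ell_j<C$ the local convergence $\psi_n\to 0$ from the inner analysis provides the boundary data needed to start the outer barrier arguments, while the outer estimates provide the decay used at infinity in $\ell_j$. Combining everything, $\|\psi_n\|_*=o(1)+C\|\rH_n\|_{**}\to 0$, contradicting the normalization $\|\psi_n\|_*=1$. The main obstacle is the coupling between the $+$ and $-$ components in the imaginary part of $\mathcal L_\e$; this is precisely what forces us to use \eqref{H1} as a positive--definiteness condition on the $2\times2$ mass matrix, rather than dealing with each component separately. A secondary technical point is the cylindrical correction $\e^2(\partial_{ss}^2-4i\partial_s)$: it is subcritical with respect to the weights defining $\|\cdot\|_*$ in the annulus $\ell_j<\tfrac2\e$, but must be absorbed carefully in the outermost region $r>1/\e$, where the anisotropic derivative weights $\e^{-1}|\partial_r|+|\partial_s|$ in $\|\cdot\|_{*,1,Re}$ (and analogously for the imaginary part) are tailored exactly to accommodate it.
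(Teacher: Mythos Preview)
Your outline is correct and follows essentially the same route as the paper: contradiction argument, inner blow--up using Lemma~\ref{Lepsilon} and the non-degeneracy result (together with the symmetry \eqref{s4} and the orthogonality condition to kill the kernel), and outer barrier arguments exploiting \eqref{H1}. One technical device worth naming explicitly: in the outer region the paper does not run a maximum principle directly on the coupled $2\times 2$ system for $(\psi_2^+,\psi_2^-)$. Instead it first freezes $|v_d^\pm|^2$ at ${t^\pm}^2$, then chooses an invertible matrix ${\bf C}$ with $2{\bf C}\,\mathbb{M}\,{\bf C}^T=\operatorname{diag}(\lambda^+,\lambda^-)$, $\lambda^\pm>0$, where $\mathbb{M}=\bigl(\begin{smallmatrix}A_+{t^+}^2 & B{t^-}^2\\ B{t^+}^2 & A_-{t^-}^2\end{smallmatrix}\bigr)$, and works with $\tilde\psi_2:={\bf C}^T(\psi_2^+,\psi_2^-)^T$. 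This reduces the imaginary part to two \emph{decoupled} scalar equations of the form $(\Delta+\e^2\partial_{ss}^2)\tilde\psi_2^\pm-\lambda^\pm\tilde\psi_2^\pm=\mathbb{H}_2^\pm$, on which the scalar comparison principles of Lemmas~\ref{lem:comparison_principle}--\ref{lem:comparison_principle_Neumann} apply directly; the estimate is then pulled back via the invertibility of ${\bf C}$. Your ``positive definiteness $\Rightarrow$ weighted maximum principle'' is exactly this idea, but the diagonalization is what makes it rigorous without developing a separate system maximum principle. The paper also works first with reduced $C^0$--type norms $\|\cdot\|_\star,\|\cdot\|_{\star\star}$ and only afterwards upgrades to the full $\|\cdot\|_*$ via Schauder, which is what you allude to in your last step.
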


\begin{proof}
 First, we introduce the norms
\begin{align*}
\|\psi\|_{\star}  =   \|\psi^+\|_{\star, 0}   + \|\psi^-\|_{\star, 0}\, ,
\end{align*}
and
\begin{align*}
  \|\rH\|_{\star\star} = \|\rH^+\|_{\star\star, 0}+ \|\rH^-\|_{\star\star, 0}\, ,
 \end{align*}
where we have set the notation by the following
 \begin{align*}
\|\psi^\pm\|_{\star, 0} = \sum_{j=1}^2 \| v_d^\pm \psi^\pm \|_{C^{0}(\ell_j<3)} + \|  \psi^\pm_1 \|_{\star, 0, Re} + \| \psi^\pm_2\|_{\star, 0, Im},
\end{align*}
\begin{align}
\|\psi_1^\pm\|_{\star, 0, Re}
&=
\sup_{\ell_1>2,\ell_2>2}
|\psi_1^\pm|
+\sup_{2<\ell_1<\frac{2}{\e} , \, 2<\ell_2<\frac{2}{\e}}
\frac{|\nabla \psi_1^\pm| }{\ell_1^{-1}+\ell_2^{-1}}
+ \sup_{r>\frac{1}{\e}}
\Bigl[
\frac{1}{\e}
|\partial_r \psi_1^\pm|
+ |\partial_s \psi_1|
\Bigr],
\nonumber
\\[4mm]
\|\psi_2^\pm\|_{\star, 0, Im}
&=
\sup_{\ell_1>2,\ell_2>2}
\frac{|\psi_2^\pm|}{\ell_1^{-2+\sigma}+\ell_2^{-2+\sigma}+\e^{\sigma-2}}
+\sup_{2<\ell_1<\frac{2}{\e} , \, 2<\ell_2<\frac{2}{\e}}
\frac{|\nabla \psi_2^\pm| }{\ell_1^{-2+\sigma}+\ell_2^{-2+\sigma}}
\nonumber
\\[2mm]
& \qquad
+\sup_{r>\frac{1}{\e}}
\left[
\e^{\sigma-2}|\partial_r \psi_2|
+\e^{\sigma-1}|\partial_s \psi_2|
\right], \nonumber
\end{align}
and
 \begin{align*}
\|\rH^\pm\|_{\star\star, 0}
&:=\sum_{j=1}^2 \| v_d^\pm \rH^\pm\|_{C^0(\ell_j<3)}
+ \sup_{\ell_1>2,\ell_2>2}
\Biggl[
\frac{| {\rm{Re}}(\rH^\pm)|}{\ell_1^{-2}+\ell_2^{-2} }
+ \frac{ |{\rm{Im}}(\rH^\pm)|}{\ell_1^{-2+\sigma}+\ell_2^{-2+\sigma}}
\Biggr].
\end{align*}
We claim that
\begin{align}
 \|\psi\|_{\star}\leq C\|\rH\|_{\star\star}. \label{formula5.2}
\end{align}
By using Schauder theory, we then can get the full estimate \eqref{est0a}.

\medskip
In  \cite{DDMR}, the authors gave a proof of \eqref{formula5.2} for the complex-valued scalar case ($B=0$).
We adopt  the methods and technology  from  \cite{DDMR}.  In  the present case, we   need to  deal with   some new  problems caused by the  coupled terms due to that $B\ne0$.
For convenience, we only highlight the difference.

 We argue  by contradiction. Assume that there exist sequences of the parameter epsilon $\{\e_n\}$ approaching $0^+$  and solutions $\{\psi^{n}\}$  in such a way that \eqref{eq:linearhomogeneous} is valid with $\rH= \rH^{(n)} $  and  the following estimates are true
\begin{equation}
\|\psi^{n} \|_{\star}=1, \qquad \| \rH^{(n)} \|_{\star\star}=o_n(1).
\label{ContraAssumption}
\end{equation}

Since the linearized operator $\LLL_\e = (\LLL^+_\e, \LLL^-_\e) $ has different  asymptotic behaviors  in  different domains, therefore,
in order to obtain a contradiction, we will divide the analysis into two parts.

\medskip
\noindent $\spadesuit $
In the region near the vortices,  by a similar method  to the proof of Lemma $5.1$ in  \cite{DDMR},   we can show that
\begin{align}\label{converencelocal}
\phi_j^n = \big(iw^+ (\psi^{n})^+,  iw^- (\psi^{n})^- \big) (z+e_j ) \longrightarrow 0, \quad \text{in}~C_{loc}^2(\R^2, \mathbb{C}^2).  \end{align}
Here we point out that  we have used the non-degeneracy results  in Lemma \ref{lem:ellipticestimatesL0-b} in the proof of \eqref{converencelocal}.
The linearized operator $L_0(\phi)$  does have one kernel element $w_{x_1} $ when the perturbations satisfy some decays  and  symmetry  constraints in that lemma.
Then the orthogonality in \eqref{eq:linearhomogeneous} will imply the validity of \eqref{converencelocal}.

\medskip
\noindent $\spadesuit $
Next  we consider the case far away from the vortices. Here we assume $ x_1 >0$ and $\ell_1>R_0$.
For $\rH=\big( \rH^+, \rH^-\big)$,
we  express $  \LLL_\e^\pm(\psi)=\rH^\pm  $ as the following
 \begin{align}  \label{psi1equation}
 \Delta \psi^\pm +    \frac{\nabla v_d^\pm \nabla \psi^\pm } {v_d^\pm}   &  - 2i A_\pm |v_d^\pm|^2 \psi^\pm_2
   -2 i B   |v_d^\mp|^2\psi^\mp_2 \nonumber
  \\[2mm]
&  \quad
  + \e^2 \Bigl( \partial_{ss}^2 \psi^\pm
+\frac{2\partial_s v_d^\pm  }{v_d^\pm }\partial_s \psi^\pm
- 4i \partial_s \psi^\pm   \Bigr)= \rH^\pm.
 \end{align}
Noting that
$$
\psi^\pm  = \psi^\pm_1+ i \psi^\pm_2,
\qquad
\rH^\pm = \rH_1^\pm+ i\rH_2^\pm ,
$$
%
we now try   to simplify the above system \eqref{psi1equation} for $(\psi^+, \psi^-)$.
   We rewrite    the system  \eqref{psi1equation} to the following one in $\R^2$
  \begin{align} \label{system for psi2}
\left\{
\begin{aligned}
&  \Delta \psi_1^+    + \e^2 \partial_{ss}^2 \psi_1^+   = \tilde{\mathcal H}_1^+,
\\[2mm]
&  \Delta \psi_1^-    + \e^2 \partial_{ss}^2 \psi_1^-   = \tilde{\mathcal H}_1^-,
\\[2mm]
&  \Delta \psi_2^+    + \e^2 \partial_{ss}^2 \psi_2^+ - 2A_+ (t^+)^2 \psi^+_2
   -2  B   (t^-)^2 \psi^-_2   = \tilde{\mathcal H}_2^+,
\\[2mm]
&\Delta \psi_2^-    + \e^2 \partial_{ss}^2 \psi_2^- - 2A_- (t^-)^2\psi^-_2
   -2  B   (t^+)^2 \psi^+_2   = \tilde{\mathcal H}_2^-,
\end{aligned}
\right.
\end{align}
with
\begin{align*}
 \tilde{\mathcal H}_1^\pm   &=  {\mathcal H}_1^\pm - \left(\frac{\nabla  W^\pm(\ell_1) }{{W^\pm(\ell_1)} }+\frac{\nabla {W^\pm(\ell_2)} }{{W^\pm(\ell_2)} } \right) \nabla \psi_1^\pm - \nabla (\theta_1+\theta_2) \nabla \psi_2^\pm  \nonumber
 \\[2mm]    & \quad
   - 2\e^2 \left[ \left(\frac{\p_s{W^\pm(\ell_1)} }{{W^\pm(\ell_1)} }+\frac{\p_s {W^\pm(\ell_2)} }{{W^\pm(\ell_2)} } \right) \partial_s \psi_1^\pm -\p_s( \theta_1+\theta_2) \p_s \psi_2^\pm  \right]
-4 \e^2 \partial_s \psi_2^\pm,
 \end{align*}
and
\begin{align*}
 \tilde{\mathcal H}_2^\pm   &=  {\mathcal H}_2^\pm - \left(\frac{\nabla  W^\pm(\ell_1) }{{W^\pm(\ell_1)} }+\frac{\nabla {W^\pm(\ell_2)} }{{W^\pm(\ell_2)} } \right) \nabla \psi_2^\pm - \nabla (\theta_1+\theta_2) \nabla \psi_1^\pm
 \nonumber
 \\[2mm]
 &\quad   -  2\e^2 \left[ \left(\frac{\p_s{W^\pm(\ell_1)} }{{W^\pm(\ell_1)} }-\frac{\p_s {W^\pm(\ell_2)} }{{W^\pm(\ell_2)} } \right) \partial_s \psi_2^\pm -\p_s( \theta_1+\theta_2) \p_s \psi_1^\pm  \right]
+4 \e^2 \partial_s \psi_1^\pm
\nonumber
 \\[2mm]
   & \quad
   + 2A_\pm\big( |v_d^\pm|^2-{t^\pm}^2\big) \psi^\pm_2
   \ +\ 2  B\big(   |v_d^\mp|^2-{t^\mp}^2\big)\psi^\mp_2.
\end{align*}
Since the  matrix
   $\mathbb{M}=\begin{pmatrix}
  A_+ |t^+|^2 &B |t^-|^2
  \\
  B |t^+|^2 &A_- |t^-|^2
  \end{pmatrix} $ is positive definite,
there then exists an invertible matrix ${\bf C}    $ such that
\begin{align}
{\bf C}^T {\bf C} = {\bf I}, \quad  2{\bf C} \, \mathbb{M}\,  {\bf C}^T =  \text{diag}(     \lambda^+,      \lambda^-) \quad \text{with}~\lambda^+,\, \lambda^->0.
\label{diagonal}
\end{align}
%
   Now we define the new vector functions
   \begin{align}
  & \tilde \psi_1 =(\tilde\psi_1^+, \tilde \psi_1^-)^T,  \label{fortildepsi1}
   \\[2mm]
&\tilde \psi_2 =(\tilde\psi_2^+, \tilde \psi_2^-)^T: ={\bf C}^T     \, ( \psi_2^+,   \psi_2^-)^T,   \label{fortildepsi2}
  \end{align}
    and \begin{align*}
      &  \mathbb{H}_2 =( \mathbb{H}_2^+,  \mathbb{H}_2^-) : =  {\bf C}^T   \, ( \tilde{\mathcal H}_2^+,    \tilde{\mathcal H}_2^-)^T,
      \\[2mm]  &
         \mathbb{H}_1 =( \mathbb{H}_1^+,  \mathbb{H}_1^-) : =    ( \tilde{\mathcal H}_1^+,    \tilde{\mathcal H}_1^-)^T.
      \end{align*}
Then  if $|z-e_j |>R_0, $   the system \eqref{system for psi2} can be rewritten as
\begin{align}
    &  ( \Delta + \e^2\partial_{ss}^2)  \tilde \psi_1   =   \mathbb{H}_1, \label{modified-sys1}
       \\[2mm]
 &  ( \Delta + \e^2\partial_{ss}^2)  \tilde \psi_2  -  \text{diag}(     \lambda^+,      \lambda^-)   \tilde \psi_2  =   \mathbb{H}_2. \label{modified-sys2}
 \end{align}
For the system \eqref{modified-sys1}-\eqref{modified-sys2}, we will use the barrier arguments based on the maximum principle provided in Lemmas \ref{lem:comparison_principle}-\ref{lem:comparison_principle_Neumann}.
We obtain that, by some calculations,
\begin{align*}
|\mathbb{H}_2| \le C\big(\| \rH\|_{\star\star,0} + R_0^{-\sigma} +\e^\sigma\big) \big( \ell_1^{\sigma-2} +\e^{2-\sigma} \big).
\end{align*}
By the comparison principle, elliptic estimates,  and  choosing  the barrier as
\begin{align*}
\mathcal B_2 =  \hat C  \Big( \|\rH\|_{\star\star}+R_0^{-\sigma}+\e^{\sigma}+\| \tilde \psi_2^\pm\|_{L^\infty(B_{R_0}(\tilde{d}))}  \Big) \Big( \ell_1^{\sigma-2} +\e^{2-\sigma} \Big),
\end{align*}
for some fixed  large constant  $\hat C>0$, we   obtain
\begin{align*}
\| \tilde \psi_2^\pm\|_{\star,0, Im} \le C \Big( \|\rH\|_{\star\star}+R_0^{-\sigma}+\e^{\sigma}+\|\tilde \psi_2^\pm\|_{L^\infty(B_{R_0}(\tilde{d}))}  \Big).
\end{align*}
On the other hand, by a similar argument as in \cite{DDMR}, we have
\begin{align*}
\| \tilde \psi_1^\pm \|_{\star, 0, Re}
\leq  C
\Big( \|\rH\|_{\star\star} + R_0^{\sigma-1}+\e^{1-\sigma} + \|\psi_1^\pm\|_{L^\infty(B_{R_0}(\tilde d))}\Big).
\end{align*}
Meanwhile, using \eqref{converencelocal} and $\| \rH^{(n)} \|_{\star\star,0}=o_n(1)$, we can get
 \begin{align*}
 \| \tilde \psi\|_{\star} \le C \| \rH\|_{\star\star}, \quad \text{with}~\tilde \psi =  (\tilde \psi_1^++i \tilde \psi_2^+,\, \tilde \psi_1^-+i \tilde \psi_2^-).
 \end{align*}
  Since ${\bf C}^T$ is invertible, we directly get
\begin{align*}
\|\psi\|_{\star}\leq C\|\rH\|_{\star\star}.
\end{align*}

Therefore, by combining the above results we derive an estimate contradicting to \eqref{ContraAssumption}.
%
%
%
\end{proof}

   We next give the proof of Proposition \ref{prop:linearfull}.

\begin{proof}[Proof of Proposition~ \ref{prop:linearfull}]  For given $M> 100\tilde d$, we first consider the problem \eqref{eq:linear} locally as the following
 \begin{align}
\label{eq:linear-local}
\left\{
\begin{aligned}
& \LLL^\pm_\e(\psi)=\rH^\pm+  c \sum_{j=1}^2\frac{\chi_j}{iw^\pm(z-e_j )} (-1)^jw^\pm_{x_1}(z-e_j )
\quad\text{in } B_M(0),
\\[2mm]
&{\rm{Re}} \int_{B(0,4)} \chi \big[ \overline{\phi_j^+} w_{x_1}^++ \overline{\phi_j^-} w_{x_1}^- \big]=0, \text{ with }\phi^\pm_j(z)=iw^\pm(z)\psi^\pm(z+e_j ),
\\[2mm]
&\psi \text{ satisfies the symmetry }  \eqref{s4}~\text{and }\psi =0 ~ \text{on}~\p {B_M(0)}.
\end{aligned}
\right.
\end{align}

Let  \begin{align*}
 \B :=& \Bigg\{\phi= (iv_d^+\psi^+,  iv_d^-\psi^-) \in H^1_0\big(B_M(0),\mathbb{C}^2\big)\, :\,  \;\;\psi \text{ satisfies \eqref{s4}},
 \\[3mm]
    &\qquad \qquad  {\rm{Re}} \int_{B(0,4)} \chi \Big[ \overline{\phi_j^+} w_{x_1}^++ \overline{\phi_j^-} w_{x_1}^- \Big]=0, \ j=1,2,\Bigg\}.
\end{align*}
We endow  the space $\B$ with   the inner product
\begin{equation} \label{inner product}
[\phi,\varphi]_ \B:=   {\rm{Re}}  \int_{B_M(0)}\Bigg[ \big( \,\nabla \phi^+ \overline{\nabla\varphi^+} +\e^2\p_s \phi^+ \overline{\p_s \varphi^+} \,\big)
\ +\
\big( \,\nabla \phi^- \overline{\nabla\varphi^-} +\e^2\p_s \phi^- \overline{\p_s \varphi^-} \,\big) \Bigg],
\end{equation}
for any $\phi, \varphi \in\B. $
 Due to the Poincar\'e inequality, we    know  that  the space  equipped with  the  topology \eqref{inner product}   is  a subspace of
$H^1_0\big(B_M(0),\mathbb{C}^2\big)$.
By some calculations, the following identities hold
\begin{align*}
\Delta \Phi^\pm &+ \Big[ A_\pm \big({t^\pm}^2-|v_d^\pm|^2 \big)+B\big({t^\mp}^2-|v_d^\mp|^2\big)\Big]\Phi^\pm   + (\eta_1-1)  \frac{\mathbb{E}^\pm} {v_d^\pm}\Phi_j^\pm (z) \nonumber
\\[2mm]
&-\,2A_\pm {\rm{Re}}\Big(v_d^\pm\overline{\Phi^\pm}\Big)v_d^\pm\,-\,2B {\rm{Re}}\Big(v_d^\mp \overline{\Phi^\mp}\Big)v_d^\pm
 +
\e^2(\p^2_{ss}\Phi^\pm -4i\p_s\Phi^\pm -4\Phi^\pm)
\nonumber
\\[2mm]
& =   i v_d^\pm  \rH^\pm +  iv_d^\pm c  \sum_{j=1}^2\frac{\chi_j}{iw^\pm(z-e_j )} (-1)^jw^\pm_{x_1}(z-e_j ), \quad \text{in}~B_M(0),
\end{align*}
with
 \[\Phi^\pm=  iv_d^\pm \psi^\pm.  \]
For any test function $\varphi = (\varphi^+, \varphi^-) $,  we rewrite these equations in the sense of  distribution
 \begin{align}  \label{variational form}
  &  -{\rm{Re}}  \int_{B_M(0)}\Big[ \big( \nabla \Phi^+ \overline{\nabla\varphi^+} +\e^2\p_s \Phi^+ \overline{\p_s \varphi^+} \big) +\big( \nabla \Phi^- \overline{\nabla\varphi^-} +\e^2\p_s \Phi^- \overline{\p_s \varphi^-} \big)   \Big]
  \nonumber
\\[2mm]
&\qquad\qquad
  -\e^2  {\rm{Re}}  \int_{B_M(0)}\Big[  \big(4i\p_s\Phi^+ -4\Phi^+\big)\, \overline{\varphi^+} + \big(4i\p_s\Phi^- -4\Phi^-\big)\, \overline{\varphi^-}\Big]
    \nonumber
\\[2mm]
&\qquad\qquad
  -   {\rm{Re}}  \int_{B_M(0)}  2A_\pm {\rm{Re}}\Big(v_d^+\overline{\Phi^+}\Big)v_d^+ \overline{\varphi^+}\,+\,2B {\rm{Re}}\Big(v_d^- \overline{\Phi^-}\Big)v_d^+ \overline{\varphi^+}
      \nonumber
\\[2mm]
&\qquad\qquad
  -   {\rm{Re}}  \int_{B_M(0)}  2A_\pm {\rm{Re}}\Big(v_d^-\overline{\Phi^-}\Big)v_d^- \overline{\varphi^-}\,+\,2B {\rm{Re}}\Big(v_d^+ \overline{\Phi^+}\Big)v_d^- \overline{\varphi^-}
        \nonumber
\\[2mm]
&\qquad\qquad
  +   {\rm{Re}}  \int_{B_M(0)}  \Bigg(  \Big[ A_+ \big({t^+}^2-|v_d^+|^2 \big)+B\big({t^-}^2-|v_d^-|^2\big)\Big]    + (\eta_1-1)  \frac{\mathbb{E}^+} {v_d^+}  \Bigg) \Phi^+\overline{\varphi^+}
        \nonumber
\\[2mm]
&\qquad\qquad
  +   {\rm{Re}}  \int_{B_M(0)}  \Bigg(  \Big[ A_- \big({t^-}^2-|v_d^-|^2 \big)+B\big({t^+}^2-|v_d^+|^2\big)\Big]    + (\eta_1-1)  \frac{\mathbb{E}^-} {v_d^-}  \Bigg) \Phi^-\overline{\varphi^-}
          \nonumber
\\[2mm]
& =   {\rm{Re}}  \int_{B_M(0)}   \Bigg( i v_d^+  \rH^- +  iv_d^+ c  \sum_{j=1}^2\frac{\chi_j}{iw^+(z-e_j )} (-1)^jw^+_{x_1}(z-e_j )\Bigg)  \overline{\varphi^+}
        \nonumber
\\[2mm]
&\qquad\qquad  + {\rm{Re}}  \int_{B_M(0)}   \Bigg( i v_d^-  \rH^- +  iv_d^- c  \sum_{j=1}^2\frac{\chi_j}{iw^-(z-e_j )} (-1)^jw^-_{x_1}(z-e_j )\Bigg)  \overline{\varphi^-}.
  \end{align}
Denote by  linear operator $\big \langle \mathcal{K}(\Phi), \cdot   \big\rangle_{\B}$  on $\B$  as
 \begin{align*}
  &
  -\e^2  {\rm{Re}}  \int_{B_M(0)}\Big[  \big(4i\p_s\Phi^+ -4\Phi^+\big)\, \overline{\varphi^+} + \big(4i\p_s\Phi^- -4\Phi^-\big)\, \overline{\varphi^-}\Big]
    \nonumber
\\[2mm]
&\qquad\qquad
  -   {\rm{Re}}  \int_{B_M(0)}  2A_\pm {\rm{Re}}\Big(v_d^+\overline{\Phi^+}\Big)v_d^+ \overline{\varphi^+}\,+\,2B {\rm{Re}}\Big(v_d^- \overline{\Phi^-}\Big)v_d^+ \overline{\varphi^+}
      \nonumber
\\[2mm]
&\qquad\qquad
  -   {\rm{Re}}  \int_{B_M(0)}  2A_\pm {\rm{Re}}\Big(v_d^-\overline{\Phi^-}\Big)v_d^- \overline{\varphi^-}\,+\,2B {\rm{Re}}\Big(v_d^+ \overline{\Phi^+}\Big)v_d^- \overline{\varphi^-}
        \nonumber
\\[2mm]
&\qquad\qquad
  +   {\rm{Re}}  \int_{B_M(0)}  \Bigg(  \Big[ A_+ \big({t^+}^2-|v_d^+|^2 \big)+B\big({t^-}^2-|v_d^-|^2\big)\Big]    + (\eta_1-1) \frac{\mathbb{E}^+} {v_d^+}\Bigg) \Phi^+\overline{\varphi^+}
        \nonumber
\\[2mm]
&\qquad\qquad
  +   {\rm{Re}}  \int_{B_M(0)}  \Bigg(  \Big[ A_- \big({t^-}^2-|v_d^-|^2 \big)+B\big({t^+}^2-|v_d^+|^2\big)\Big]    + (\eta_1-1) \frac{\mathbb{E}^+} {v_d^+}  \Bigg) \Phi^-\overline{\varphi^-}
          \nonumber
\\[2mm]
& =   :   \big \langle \mathcal{K}\Phi, \varphi     \big\rangle_{\B}.
  \end{align*}
 Also,  we   denote by  the linear operator $\big \langle \mathcal{S}(\Phi), \cdot   \big\rangle_{\B}$  on $\B$ as
\begin{align*}
   & {\rm{Re}}  \int_{B_M(0)}   \Bigg( i v_d^+  \rH^- +  iv_d^+ c  \sum_{j=1}^2\frac{\chi_j}{iw^+(z-e_j )} (-1)^jw^+_{x_1}(z-e_j )\Bigg)  \overline{\varphi^+}
        \nonumber
\\[2mm]
&  \quad + {\rm{Re}}  \int_{B_M(0)}   \Bigg( i v_d^-  \rH^- +  iv_d^- c  \sum_{j=1}^2\frac{\chi_j}{iw^-(z-e_j )} (-1)^jw^-_{x_1}(z-e_j )\Bigg)  \overline{\varphi^-}  =   :   \big \langle  \mathcal{S},     \varphi\big\rangle_{\B}.
\end{align*}
 Then  \eqref{variational form} is rewritten into the following form
 \begin{align*}
-  [\phi,\varphi]_\B + \big \langle \mathcal{K}\Phi, \varphi     \big\rangle_{\B} =  \big \langle  \mathcal{S},     \varphi\big\rangle_{\B}, \quad\text{for any}~ \varphi \in  C_0^\infty\big(B_M(0),\mathbb{C}^2\big).
 \end{align*}
Riesz representation theory  and compact Sobolev embedding  imply  that   there exists a  compact bounded linear operator $\mathbb{K}$ on $\B$, and $\mathbb{S} \in \B$  such that
\begin{align}\label{fredholm}
\Phi -  \mathbb{K} (\Phi) = \mathbb{S}.
\end{align}

In order to get the existence of a solution  to  \eqref{fredholm},
we need to show the homogeneous equation has only trivial solutions via the  Fredholm's theory.
Therefore, we shall first establish the a priori estimate of  multiplier $c$.
We consider  the equivalent form of \eqref{eq:linear-local} with translated variables
  \begin{align*}
    L^\pm_{d,j}(\Phi_j)    &  = i w^\pm(z)   \rH^\pm(z+e_j )  + c \chi_j  w^\pm_{x_1}(z),
    \nonumber
\\[2mm]
&  : =  \rH^\pm_j +  c \chi_j  w^\pm_{x_1}(z) \quad \text{in}~ B_{\tilde{d}} (0),
  \end{align*}
with
\[
\Phi_j =    \big( \Phi_j^+,  \Phi_j^- \big), \qquad \Phi_j^\pm(z) =i w^\pm(z)  \psi^\pm(z+e_j ),
\]
  and where the operator  $    L^\pm_{d,j}(\Phi_j)  $  is defined  in  \eqref{Lpmdj}.
 We can test  the equations  against $w^\pm_{x_1}$ respectively  to find
\begin{align*}
c=-\frac{1}{c_*} \Bigg[ { \rm{Re} }  \int_{B_{\tilde{d}} (0)}    \rH^\pm_j w^\pm_{x_1} -{ \rm{Re} } \int_{B_{\tilde{d}} (0)}     L^\pm_{d,j}(\Phi_j)   w^\pm_{x_1}\Bigg],
 \end{align*}
 for some   $c^* \sim C$. Using the expressions of $L^\pm_{d,j}(\Phi_j)$ in \eqref{Lpmdj},  by some tedious but straightforward analysis,   we can get the  expression for $c$
 \begin{equation}\label{expressionforc}
c=-\frac{1}{c_*} { \rm{Re} }  \int_{B_{\tilde{d}} (0)}   \rH^\pm_j w^\pm_{x_1} +O_\e(\e\sqrt{|\ln \e|})\frac{\|\psi\|_*}{c_*}.
\end{equation}
When $ \rH^\pm_j =0 $,     the expression for $c$ in \eqref{expressionforc} and the estimate in  Lemma \ref{FirstEstimate} imply that
\begin{align*}
\|\psi\|_*\leq C\| c\sum_{j=1}^2\chi_j(z)(-1)^j\frac{w^\pm_{x_1}(z-e_j )}{iw^\pm(z-e_j )} \|_{**}\leq C \e \sqrt{|\ln \e|} \|\psi\|_*,
\end{align*}   and thus there exists only trivial solution when  $ \rH^\pm_j =0 $.
It is worth noting that we now apply the norms $\|\cdot\|_{*}$ and $\|\cdot\|_{**}$ restricted on the bounded domains.
 Then the existence of \eqref{eq:linear-local} holds  and from Lemma \ref{FirstEstimate}  there exists a constant $C$ independent of $M$ such that the following estimate holds
\begin{equation*}
\|\psi_M\|_*\leq C \|\rH\|_{**}.
\end{equation*}
The real  $\psi$ solving \eqref{eq:linear} can be obtain by extracting  a subsequence such that $\psi_M\rightharpoonup \psi$ in $H^1_{\text{loc}}(\R^2)$.
  And the estimate $\|\psi\|_*\leq C\|\rH\|_{**}$ is a direct result of Lemma \ref{FirstEstimate}.
\end{proof}

\medskip
\subsection{Proof of Proposition \ref{prop:sharp2b}} \label{sec:prop2}
Consider the linear projected problem \eqref{eq:linearhomogeneous}
and recall the semi-norms $|\cdot|_{\sharp\sharp}$, $|\cdot|_{\sharp} $ in \eqref{sharpsharpnorm} and \eqref{sharpnorm}.
We first give the following a priori estimate.

 \begin{lemma}  \label{prioriestimate2}
 Suppose $\alpha\in (0,1), \sigma\in (0,1)$.
 There exists a constant $C$ depending only on $\alpha, \sigma$, such that for all $\e$ small enough, and any solution of \eqref{eq:linearhomogeneous},
 we have the estimate
\begin{equation}
\label{claimSharp}
|\psi|_\sharp \leq C(  |\rH|_{\sharp\sharp} + \e | \log\e|^\frac{1}{2} \|\rH\|_{**} )  .
\end{equation}

 \end{lemma}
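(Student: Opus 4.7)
The plan is to prove \eqref{claimSharp} by a contradiction argument combined with barrier estimates in the intermediate annulus $2<\ell_j<\mathbb{R}_\e$, $j=1,2$, refining the baseline bound of Proposition \ref{prop:linearfull}. Suppose there are sequences $\e_n\to 0$, right-hand sides $\rH_n$ and solutions $\psi_n$ of \eqref{eq:linearhomogeneous} with $|\psi_n|_\sharp=1$ yet $|\rH_n|_{\sharp\sharp}+\e_n|\log\e_n|^{1/2}\|\rH_n\|_{**}=o_n(1)$. Proposition \ref{prop:linearfull} gives $\|\psi_n\|_*\le C\|\rH_n\|_{**}$, so on the inner region $\ell_j<3$ and the far region $\ell_j>\mathbb{R}_\e/2$ the pointwise size of $\psi_n$ is already controlled by $\|\rH_n\|_{**}$; since the $|\cdot|_\sharp$ weights at $\ell_j\sim\mathbb{R}_\e$ absorb a factor $\mathbb{R}_\e\log(2)\sim\e^{-1}|\log\e|^{-1/2}$, this contributes precisely the term $\e|\log\e|^{1/2}\|\rH\|_{**}$ in \eqref{claimSharp}. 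The real content of the lemma lies in the annulus.

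Inside the annulus I would split $\psi^\pm=\psi_1^\pm+i\psi_2^\pm$ and unfold $\mathcal L_\e^\pm\psi=\rH^\pm$ as in the block system \eqref{system for psi2}, which is massless for the real parts and massive (but coupled in $\pm$) for the imaginary parts. For the real parts the dominant equation is $\Delta\psi_1^\pm=\tilde{\mathcal H}_1^\pm$, with slow-decay source of size $\ell_j^{-1}|\rH|_{\sharp\sharp}$. A natural super-solution is
\[
\mathcal B_1=\hat C\bigl(|\rH|_{\sharp\sharp}+\e|\log\e|^{1/2}\|\rH\|_{**}\bigr)\bigl(\ell_1\log(2\mathbb{R}_\e/\ell_1)+\ell_2\log(2\mathbb{R}_\e/\ell_2)\bigr),
\]
whose Laplacian reproduces exactly the $\ell_j^{-1}$ slow decay needed. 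For the imaginary parts I would diagonalize the coupled mass using the matrix ${\bf C}$ of \eqref{diagonal}, which is available thanks to \eqref{H1}, obtaining decoupled equations $(\Delta+\e^2\partial_{ss}^2-\lambda^\pm)\tilde\psi_2^\pm=\widetilde{\mathbb H}_2^\pm$ with $\lambda^\pm>0$, and use the barrier
\[
\mathcal B_2=\hat C\bigl(|\rH|_{\sharp\sharp}+\e|\log\e|^{1/2}\|\rH\|_{**}\bigr)\bigl(\ell_1^{-1+\sigma}+\ell_2^{-1+\sigma}+\ell_1^{-1}\log(2\mathbb{R}_\e/\ell_1)+\ell_2^{-1}\log(2\mathbb{R}_\e/\ell_2)\bigr),
\]
whose positive mass term dominates each Fourier component of the source.

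The matching at the inner boundary $\ell_j=3$ relies on the kernel-removal furnished by Lemma \ref{lem:ellipticestimatesL0-b}: the orthogonality constraints in \eqref{eq:linearhomogeneous} combined with standard local Schauder estimates force $\phi_j^n=(iw^+(\psi^+_n),iw^-(\psi^-_n))(\cdot+e_j)\to 0$ in $C^2_{\rm loc}$ and hence deliver the required zero boundary data for the barrier. Matching at $\ell_j=\mathbb{R}_\e/2$ is supplied by $\|\psi_n\|_*\le C\|\rH_n\|_{**}$ together with the factor $\mathbb{R}_\e$ already computed. A final Schauder bootstrap then upgrades the pointwise barrier bounds to the full $|\cdot|_\sharp$ semi-norm including the first and second derivatives appearing in \eqref{normSharp1}--\eqref{normSharp2}, contradicting $|\psi_n|_\sharp=1$.

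The hard part will be the first-order transport term $\nabla v_d^\pm\cdot\nabla\psi^\pm/v_d^\pm$ appearing in $\mathcal L_\e^\pm$: expanding $\nabla v_d^\pm/v_d^\pm$ in $(\ell_j,\theta_j)$ produces a piece $i\nabla(\theta_1+\theta_2)$ of size $O(\ell_j^{-1})$ which couples $\psi_1^\pm$ into the $\psi_2^\pm$ equation with source of size $\ell_j^{-1}\log(2\mathbb{R}_\e/\ell_j)\,|\psi|_\sharp$, sitting exactly at the borderline of the $|\psi_2^\pm|_{\sharp,1,Im}$ weight. Closing the barrier inequality therefore requires choosing the constants in $\mathcal B_1$ and $\mathcal B_2$ in a coupled iterated fashion and exploiting the strict positivity of the diagonalized masses $\lambda^\pm>0$ to strictly beat the logarithm. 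The $\pm$-coupling $2B|v_d^\mp|^2\,{\rm Im}(\psi^\mp)$ between components is handled cleanly by \eqref{diagonal}, itself an instance of the positive-definiteness hypothesis \eqref{H1}; without \eqref{H1} the mass matrix could fail to be positive and no such barrier would exist.
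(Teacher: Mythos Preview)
Your proposal is correct and follows essentially the same approach as the paper's own proof, which is given only as a three-step outline (diagonalize the imaginary-part mass via ${\bf C}$ as in \eqref{diagonal}, use the non-degeneracy Lemma~\ref{lem:ellipticestimatesL0-b} near the vortices, and apply barrier/maximum-principle arguments as in Lemmas~\ref{lem:comparison_principle}--\ref{lem:estimates-3b} in the annulus, referring to Lemma~5.2 of \cite{DDMR} for the scalar details). Your write-up in fact supplies more of the barrier mechanics than the paper does; the only minor slip is that the basic estimate $\|\psi_n\|_*\le C\|\rH_n\|_{**}$ you invoke is Lemma~\ref{FirstEstimate} rather than Proposition~\ref{prop:linearfull} (the latter concerns the projected problem \eqref{eq:linear}, not \eqref{eq:linearhomogeneous}).
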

\begin{proof}  We give  some technical essentials and the outline of the proof.
The reader can  refer to the proof of Lemma 5.2 in \cite{DDMR} for more details.

%

 \medskip
\noindent{\bf Step 1:} Similar to the proof of Lemma \ref{FirstEstimate}, we make a transformation   \[ (\tilde\psi_2^+, \tilde \psi_2^-)^T: ={\bf C}^T      \, ( \psi_2^+,  \psi_2^-)^T\]
such that  the main terms in the   linear operator  of  $\tilde \psi_2=(\tilde\psi_2^+, \tilde \psi_2^-)$ can be decoupled in the region far away from the vortices, see \eqref{modified-sys2}.

 \medskip
\noindent{\bf Step 2:} When  near the vortices,   we just proceed as in the proof of Lemma $5.2$ in \cite{DDMR}. We point out that in this step we use  Lemma \ref{Lepsilon} and the  non-degeneracy results in Lemma \ref{lem:ellipticestimatesL0-b} .
On the other hand,  when far away from the vortices,  we handle the system \eqref{modified-sys1}-\eqref{modified-sys2}  by the maximum principle,
see Lemmas \ref{lem:comparison_principle}-\ref{lem:comparison_principle_Neumann} in the Appendix.
In this step, we establish the estimate
\begin{equation*}
|\tilde \psi|_\sharp \leq C(  |\rH|_{\sharp\sharp} + \e | \log\e|^\frac{1}{2} \|\rH\|_{**} ).
\end{equation*}

  \medskip
\noindent{\bf Step 3:}    Finally,  since ${\bf C}^T $ is invertible, we  get that \eqref{claimSharp} holds.
\end{proof}

\medskip
In order to prove  Proposition   \ref{prop:sharp2b}, we also need  the following lemma.
Here we  recall the reflection $\mathscr R_j(z)$ defined in \eqref{mathscrR} and suppose that the function $\rH$ enjoys the  local  symmetry  \eqref{s4}
and the following
\begin{align}
  \rH(\mathscr{R}_j z ) = - \overline{\rH(z)}  ,\quad
|z-e_j | < 2 \mathbb{R}_\e, \quad \text{for}~j=1,2.   \tag{S2}
\end{align}
We note that the function $\rH$  satisfies  the  local  symmetry \eqref{s2} consisting in even Fourier modes.

\begin{lemma} \label{decompose and priori}
Suppose that $\rH$  enjoys the symmetry   \eqref{s2} and \eqref{s4} and
\[\| \rH\|_{**}\le +\infty. \]
For any solution $\psi$ of \eqref{eq:linearhomogeneous} with $\|\psi\|_*\le +\infty$,    there exist $\psi_s, \psi_* $ such that
\begin{align*}
\psi= \psi_s+ \psi_* ,
\end{align*}
where $\psi_s$  satisfies  the following symmetry
\begin{align*}
\psi_s(\mathcal{R}_j z ) &= - \overline{\psi_s(z)} ,\quad
|z-e_j | <  \R_\e .
\end{align*}
Moreover,  there exists a constant $C$ such that for all $\e$ small, the following estimates hold
\begin{align*}
\| \psi_s \|_* + \| \psi_*  \|_* \leq C  \| \rH \|_{**},
\end{align*}
\begin{align*}
| \psi_*  |_\sharp \leq C  \e |\log\e|^{\frac{1}{2}}   \| \rH \|_{**} .
\end{align*}
\end{lemma}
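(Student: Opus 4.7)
\medskip
\noindent\textbf{Proof plan.} The idea is to exploit the hypothesis that $\rH$ already satisfies the reflection symmetry \eqref{s2}, so any component of $\psi$ that violates \eqref{s2} can only arise from the parts of $\LLL_\e$ that fail to commute with the reflections $\mathscr{R}_j$. Because those non-commuting parts carry an explicit $\e^2$ prefactor from the screw-symmetry reduction (or come from the far vortex), the asymmetric component of $\psi$ must be small by a factor $\e|\log\e|^{1/2}$, which is exactly what is needed to activate Lemma~\ref{prioriestimate2}.

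Concretely, I would first symmetrize $\psi$ locally around each vortex by setting
\begin{equation*}
\widetilde{\psi}_{s,j}(z) := \tfrac{1}{2}\bigl[\psi(z) - \overline{\psi(\mathscr{R}_j z)}\bigr], \qquad |z-e_j| < 2\mathbb{R}_\e, \quad j=1,2,
\end{equation*}
and gluing these local pieces via the cut-offs $\eta_{j,\mathbb{R}_\e}$ from \eqref{cutoff}:
\begin{equation*}
\psi_s := \eta_{1,\mathbb{R}_\e}\widetilde{\psi}_{s,1} + \eta_{2,\mathbb{R}_\e}\widetilde{\psi}_{s,2}, \qquad \psi_* := \psi - \psi_s.
\end{equation*}
By construction $\psi_s$ satisfies \eqref{s2} inside $|z-e_j|<\mathbb{R}_\e$; it also preserves \eqref{s4} since that symmetry commutes with each $\mathscr{R}_j$. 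The orthogonality conditions in \eqref{eq:linearhomogeneous} remain valid for $\psi_s$ and $\psi_*$ up to absorbable cut-off errors. The crude bound $\|\psi_s\|_* + \|\psi_*\|_* \le C\|\rH\|_{**}$ is immediate from Proposition~\ref{prop:linearfull} applied to $\psi$, since local symmetrization is bounded in each of the weighted components of $\|\cdot\|_*$.

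The heart of the proof is the sharp control of $\psi_*$. Let $P$ denote the symmetrization operator. Applying $P$ to $\LLL_\e\psi = \rH$ and using $P\rH = \rH$ (which is exactly \eqref{s2}) gives
\begin{equation*}
\LLL_\e(\psi_*) = [\LLL_\e, P]\psi + \mathcal{E}_{\text{cut}}(\psi),
\end{equation*}
where $\mathcal{E}_{\text{cut}}$ collects the commutator with $\eta_{j,\mathbb{R}_\e}$ and is supported in the annuli $\mathbb{R}_\e \le |z-e_j| \le 2\mathbb{R}_\e$. The planar Ginzburg--Landau part of $\LLL_\e$ commutes with $P$ up to contributions from the vortex at $-e_j$ (viewed from $e_j$), which by Lemma~\ref{lemmaofW} are pointwise of order $\e^2|\log\e|$ on $|z-e_j|<\mathbb{R}_\e$; the remaining non-commuting piece is the $\e^2(\partial_{ss}^2-4i\partial_s-4)$ block, which is of size $\e^2$ acting on first and second derivatives of $\psi$ already controlled by $\|\psi\|_*$. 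Adding these three contributions and using $\|\psi\|_* \le C\|\rH\|_{**}$ leads to
\begin{equation*}
\bigl|[\LLL_\e,P]\psi + \mathcal{E}_{\text{cut}}(\psi)\bigr|_{\sharp\sharp} + \bigl\|[\LLL_\e,P]\psi + \mathcal{E}_{\text{cut}}(\psi)\bigr\|_{**} \le C\e|\log\e|^{1/2}\|\rH\|_{**}.
\end{equation*}
Lemma~\ref{prioriestimate2} applied to $\psi_*$ then yields the claimed estimate $|\psi_*|_\sharp \le C\e|\log\e|^{1/2}\|\rH\|_{**}$.

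The main obstacle will be bounding the commutator $[\LLL_\e,P]\psi$ in the slow-decay semi-norm $|\cdot|_{\sharp\sharp}$ rather than merely in $\|\cdot\|_{**}$, because the $\e^2\partial_s$ contributions must be rewritten in the local polar coordinates $(\ell_j,\theta_j)$ and their Fourier modes matched against the structure exposed in the decomposition of $\rR_o^\alpha$ in Proposition~\ref{errorProp2}. Once that matching is in place, the $\e|\log\e|^{1/2}$ gain from Lemma~\ref{prioriestimate2} can be converted into the final sharp bound on $\psi_*$.
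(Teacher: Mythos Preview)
Your approach is correct and reaches the same conclusion, but it is organized differently from the paper's proof. The paper does \emph{not} symmetrize $\psi$ directly. Instead it first splits the operator as $\LLL_\e=\LLL_{\e,s,j}+\LLL_{\e,r,j}$, where $\LLL_{\e,s,j}$ preserves the reflection symmetry \eqref{s2} and $\LLL_{\e,r,j}$ collects all non-invariant pieces (the far-vortex terms $\nabla w_b/w_b$, $|v_d|^2-|w_a|^2$, and the asymmetric $\e^2$ terms). It then \emph{solves an auxiliary problem} $\LLL_{\e,s,j}(\psi_{s,j})=\rH\,\eta_{j,2\R_\e}$ on $\R^2$, glues $\psi_s=\sum_j\eta_{j,\frac12\R_\e}\psi_{s,j}$, and sets $\psi_*=\psi-\psi_s$; the remaining right-hand side for $\psi_*$ is then exactly $-\LLL_{\e,r,j}(\psi_{s,j})$ (up to cut-off errors), to which Lemma~\ref{prioriestimate2} applies.

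Your direct symmetrization $P_j\psi=\tfrac12[\psi-\overline{\psi\circ\mathscr R_j}]$ leads to the same endpoint: since $P_j\rH=\rH$ and $\LLL_{\e,s,j}$ commutes with $P_j$, one gets $\LLL_\e\psi_*=-[\LLL_{\e,r,j},P_j]\psi$ near each vortex, which is again $\LLL_{\e,r,j}$ acting on something controlled by $\|\psi\|_*$. Your route avoids solving an extra PDE; the paper's route has the advantage that $\LLL_{\e,r,j}$ is written out explicitly in $(\ell_j,\theta_j)$ coordinates, which makes the $|\cdot|_{\sharp\sharp}$ bound on the right-hand side a direct line-by-line check rather than the commutator/Fourier-matching computation you flag as the main obstacle. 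Either way the required smallness comes from the same sources you identified ($\nabla w_b/w_b=O(\tilde d^{-1})=O(\e|\log\e|^{1/2})$ and the $\e^2\tilde d$, $\e^2$ prefactors), and the orthogonality for $\psi_*$ is automatic because the even-mode piece $\psi_s$ pairs to zero with the odd-mode kernel $w_{x_1}$ on $B(0,4)$.
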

\begin{proof}
We  decompose  the operator $\LLL_\e = \Big( \LLL^+_\e, \LLL^-_\e\Big)$  as
\begin{align*}
\LLL_\e^\pm = \LLL_{\e,s, j}^\pm + \LLL_{\e,r, j}^\pm,
\end{align*}
with $\LLL_{\e,s,j}$ preserving  the symmetry \eqref{s2}.  More specifically, we have the expressions of $\LLL_{\e,s, j},  \LLL_{\e,r, j} $ in the coordinates $(\ell_j, \theta_j)$
\begin{align*}
\LLL_{\e,s,1}^\pm( \psi )  &=  \Delta \psi^\pm
+ 2\frac{\nabla w_a \nabla \psi^\pm}{w_a}
-2i A_\pm |w_a^\pm|^2 {\rm{Im}}(\psi^\pm) -2i  B |w_a^\mp|^2 {\rm{Im}}(\psi^\mp)
\nonumber
\\[2mm]
&   \quad + \e^2 \Biggl[ \tilde d^2 \partial_{\ell_1 \ell_1}^2 \psi \sin^2\theta_1
+\frac{\tilde d^2}{\ell_1}
\partial_{\ell_1 \theta_1}^2 \psi^\pm   \sin\theta_1   \cos\theta_1
+\partial_{\theta_1 \theta_1}^2 \psi^\pm \Bigl(1 + \frac{\tilde d^2}{\ell_1^2} \cos^2\theta_1 \Bigr)
\\[2mm]
&
\qquad\qquad
+ \partial_{\ell_1} \psi^\pm  \frac{\tilde d^2}{\ell_1} \cos^2\theta_1
-2 \partial_{\theta_1} \psi^\pm
\frac{\tilde d^2}{\ell_1^2} \sin\theta_1\cos \theta_1
\Biggr]
\\[2mm]
&
\quad
+   \frac{\hat d^2\eta_1}{|\ln \e|}    {W^\pm}'(\ell_1) \sin^2 \theta_1   \psi^\pm,
\end{align*}
and
\begin{align*}
\LLL_{\e,r,1}^\pm( \psi )  &=
 2\frac{\nabla w_b \nabla \psi^\pm}{w_b}
-2i A_\pm (|v_d^\pm|^2- |w_a^\pm|^2) {\rm{Im}}(\psi^\pm) -2i  B (|v_d^\mp|^2-|w_a^\mp|^2) {\rm{Im}}(\psi^\mp)
\nonumber
\\[2mm]
&\quad
+ \e^2 \Biggl[ 2 \tilde d
\partial_{\ell_1 \theta_1}^2 \psi
 \sin\theta_1
+2 \partial_{ \theta_1\theta_1}^2 \psi^\pm  \frac{\tilde d}{\ell_1} \cos\theta_1
+ \partial_{\ell_1} \psi^\pm  \tilde d \cos\theta_1
-  \partial_{\theta_1} \psi^\pm
\frac{\tilde d}{\ell_1} \sin\theta_1 \Biggr]
\nonumber
\\[2mm]
&\quad
+
\e^2 \Bigl( \frac{2\partial_s v_d^\pm }{v_d^\pm}
- 4i  \Bigr)
\Biggl[  \partial_{\ell_1} \psi^\pm \tilde d \sin \theta_1
+  \Big(1 + \frac{\tilde d}{\ell_1} \cos \theta_1 \partial_{\theta_1}\Big)\psi  \Biggr]
\nonumber
\\[2mm]
&\quad
+ \eta_1 \Bigg(  \frac{S^\pm(v_d)} {v_d^\pm} -  \frac{\hat d^2}{|\ln \e|}    {W^\pm}'(\ell_1) \sin^2 \theta_1  \Bigg)   \psi^\pm,
\end{align*}
with
$$
w^\pm_a(z) =W^\pm(\ell_1) e^{i \theta_1},\qquad  w^\pm_b(z) =W^\pm(\ell_2) e^{i \theta_2} .
$$
 The expressions of $\LLL_{\e,s,2}^\pm( \psi )$ and $\LLL_{\e,r,2}^\pm( \psi )    $  are similar.

 Next, we consider the following  problem
 \begin{align}
\nonumber
\left\{
\begin{aligned}
&  \LLL_{\e,s,j}^\pm (\psi_{s,j})= \rH^\pm  \eta_{j,2\R_\e}
\quad\text{in }\R^2,
\\[2mm]
&{\rm{Re}} \int_{B(0,4)} \chi \big[ \overline{\phi_{s,j}^+} w_{x_1}^++ \overline{\phi_{s,j}^-} w_{x_1}^- \big]=0,
\text{ with }\phi_{s,j}^\pm(z)=iw^\pm(z)\psi_{s,j}^\pm(z+e_j ),
\\[2mm]
&\psi_{s,j}^\pm \text{ satisfies } \psi_{s,j}^\pm(\bar z)  = - \psi_{s,j}^\pm(z).
\end{aligned}
\right.
\end{align}
Note that,  due to the local symmetry \eqref{s2},  $\rH$ is orthogonal to the kernel locally,
  thus the multiplier $c$ vanishes.
  Recall that $ \LLL_{\e,s,j}= \big(\LLL_{\e,s,j}^+, \LLL_{\e,s,j}^-\big)$ preserves  symmetry \eqref{s2} and $\rH$ satisfies \eqref{s2}.    Then for $\psi_{s,j}$, the symmetry \eqref{s2}   and the following estimate both  hold
  \begin{align*}
  \| \psi_{s,j} \|_*
\leq C \| \rH\|_{**}.
  \end{align*}
  Let
  \begin{align*}
\psi_s = \eta_{1,\frac{1}{2}\R_\e} \psi_{s,1}
+ \eta_{2,\frac{1}{2}\R_\e} \psi_{s,2} ,
\end{align*}
then  \begin{align*}
  \| \psi_{s} \|_*
\leq C \| \rH\|_{**}.
  \end{align*}
Define   $\hat \rH= (\hat \rH^\pm, \hat \rH^\pm) $ with
\begin{align*}
 \hat \rH &:= \rH -  \LLL_{\e,r,1}(\eta_{1,\frac{1}{2}R_\e } \psi_{s,1} ) -   \LLL_{\e,s,1}(\eta_{1,\frac{1}{2}R_\e } \psi_{s,1} )
- \LLL_{\e,r,2}(\eta_{2,\frac{1}{2}R_\e } \psi_{s,2} ) -   \LLL_{\e,s,2}(\eta_{2,\frac{1}{2}R_\e } \psi_{s,2} ),
\end{align*}
and consider the following
\begin{align} \label{prop:linearfullforhatpsi}
\left\{
\begin{aligned}
& \LLL_\e^\pm ( \hat \psi^\pm)=\hat \rH^\pm   + c \sum_{j=1}^2\frac{\chi_j}{iw^\pm(z-e_j )} (-1)^jw^\pm_{x_1}(z-e_j )
\quad\text{in }\R^2,
\\[2mm]
&{\rm{Re}} \int_{B(0,4)} \chi \big[ \overline{\hat \phi_j^+} w_{x_1}^++ \overline{\hat \phi_j^-} w_{x_1}^- \big]=0,
\text{ with }\hat \phi^\pm_j(z)=iw^\pm(z)\hat \psi^\pm(z+e_j ),
\\[2mm]
&\hat \psi^\pm \text{ satisfies the symmetry }  \eqref{s2}.
\end{aligned}
\right.
\end{align}
By the solvability  in  Proposition \ref{prop:linearfull} and the a priori estimate in Lemma \ref{prioriestimate2}, we can get  a solution $\hat \psi$  of \eqref{prop:linearfullforhatpsi} with the estimates
 \begin{align*}
\| \hat \psi \|_* &\leq C   \| \hat \rH\|_{**},
\\[2mm]
| \hat \psi |_\sharp &\leq C
( | \hat \rH  |_{\sharp\sharp} +\e |\log\e |^{\frac{1}{2}} \|\rH \|_{**} ),
\\[2mm]
 | \hat \rH  |_{\sharp\sharp}   & \le C \e |\log\e |^{\frac{1}{2}} \|\rH \|_{**}.
\end{align*}
 In fact, we have
  \[ \psi = \psi_s + \hat \psi, \]
  so we take \[  \psi_* : = \hat \psi.   \]
  From the analysis above,  $\psi_s$ and $\psi_*$ satisfy  the estimates in  this Lemma.
\end{proof}

 \medskip
Finally, we give the proof of Proposition~\ref{prop:sharp2b}.

 \begin{proof}[Proof of Proposition~\ref{prop:sharp2b}]
 By  combining the a priori estimates  in Lemma \ref{prioriestimate2} and Lemma \ref{decompose and priori}, and  the decompositions in Lemma \ref{decompose and priori}, we  can prove  Proposition \ref{prop:sharp2b}.  More  technical details can be found  in \cite{DDMR}.
 \end{proof}

\bigskip
\section{Solving the projected nonlinear problem} \label{section6}
In this section, we would  deal with the   projected problem \eqref{eq:linear5.1}.
The resolution theory is  provided in the following:
\begin{proposition}\label{proposition61}
There exists  a number  $D>0$, depending only on $\alpha, \sigma \in (0,1)$,  such that for any $\epsilon$  sufficiently small, problem \eqref{eq:linear5.1} has a solution $(\psi_\e, c)$  which satisfies
\begin{align*}
  \|\psi_\e\|_* \le  D \frac 1{|\ln \e|},
\end{align*}
and $\psi_\e$ is a continuous function of the parameter $\hat d$.
Furthermore, we have the decomposition
$$
\psi_\e = \psi_{\e o} + \psi_{\e e}= \psi_{\e o}^\alpha + \psi_{\e o}^\beta+ \psi_{\e e}
$$  and the estimates
\begin{align*}
| \psi^{\beta}_{\e o}|_\sharp +\|\psi^{\beta}_{\e o}\|_*   \le  D \e \sqrt{|\ln \e|},
 \qquad
| \psi_{\e o}^\alpha|_\sharp \le D \frac \e{ \sqrt{|\ln \e|}}.
\end{align*}
\end{proposition}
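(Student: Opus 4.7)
\textbf{Proof proposal for Proposition \ref{proposition61}.}

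The plan is to reformulate \eqref{eq:linear5.1} as a fixed point problem and apply the Banach contraction principle in a ball of the space defined by the $\|\cdot\|_*$ norm with the symmetry \eqref{s4} and the orthogonality conditions imposed. By Proposition \ref{prop:linearfull}, the linear operator $T_\e$ that solves the projected linear problem \eqref{eq:linear} is bounded from $(\|\cdot\|_{**})$ to $(\|\cdot\|_*)$. Hence \eqref{eq:linear5.1} is equivalent to
\begin{equation*}
\psi = T_\e\Big(\rR + \mathcal N(\psi)\Big) =: \mathbb G_\e(\psi).
\end{equation*}
I would then show that $\mathbb G_\e$ is a contraction on the closed ball
\begin{equation*}
\mathcal A_D := \Big\{\psi \in \mathcal A \,:\, \|\psi\|_* \le D/|\ln\e|\Big\}
\end{equation*}
for a suitably large constant $D$ depending only on $\alpha, \sigma$. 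The error estimate $\|\rR\|_{**} \le C/|\ln\e|$ from Proposition \ref{prop:sizeoferror} handles the inhomogeneous term, so the work lies in bounding $\mathcal N$.

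The main technical step is to establish, by direct inspection of the explicit formula for $\mathcal N(\psi)$ given after \eqref{result}, the quadratic estimate
\begin{equation*}
\|\mathcal N(\psi)\|_{**} \le C \|\psi\|_*^{\,2},
\qquad
\|\mathcal N(\psi_1) - \mathcal N(\psi_2)\|_{**} \le C\big(\|\psi_1\|_* + \|\psi_2\|_*\big)\|\psi_1 - \psi_2\|_*\, ,
\end{equation*}
for $\psi,\psi_1,\psi_2 \in \mathcal A_D$. The terms $\tilde N_0^\pm(\psi)$ and $\e^2 i(\partial_s\psi^\pm)^2$ in the outer region are genuinely quadratic in $\psi$; in the inner region the expressions involve $\gamma(\psi^\pm)=(1-\eta)v_d^\pm(e^{i\psi^\pm}-1-i\psi^\pm)$, which is also quadratic, together with the factor $\tilde\eta/(\tilde\eta+(1-\tilde\eta)e^{i\psi^\pm})$ which, thanks to the smallness $\|\psi\|_*\le D/|\ln\e|$, stays bounded. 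Together with $\|T_\e\|\le C$ and $\|\rR\|_{**}\le C/|\ln\e|$, this yields $\|\mathbb G_\e(\psi)\|_*\le C/|\ln\e| + C D^2/|\ln\e|^2 \le D/|\ln\e|$ and a contraction constant of order $D/|\ln\e|$, both valid for $\e$ small. A unique fixed point $\psi_\e$ follows, along with continuity with respect to $\hat d$ since $\rR$, $\mathcal N$ and $T_\e$ all depend continuously on $\hat d$ and the contraction factor is uniform.

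For the finer decomposition, I would apply Proposition \ref{prop:sharp2b} to the right-hand side of \eqref{eq:linear5.1} with the fixed point $\psi_\e$ already in hand. Setting $\rH := \rR + \mathcal N(\psi_\e)$, I decompose $\rH_o = \rH_o^\alpha + \rH_o^\beta$ mirroring the decomposition $\rR_o=\rR_o^\alpha + \rR_o^\beta$ of Proposition \ref{errorProp2}, and assign the odd part of $\mathcal N(\psi_\e)$ to the $\beta$ component (since $\mathcal N(\psi_\e)$ is quadratic, of size $\|\psi_\e\|_*^{\,2} = O(|\ln\e|^{-2})$ in $\|\cdot\|_{**}$, which is $o(\e\sqrt{|\ln\e|})$ and harmless for the $\beta$ bound). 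Proposition \ref{prop:sharp2b} then produces a decomposition $\psi_{\e o} = \psi_{\e o}^\alpha + \psi_{\e o}^\beta$ with
\begin{equation*}
|\psi_{\e o}^\alpha|_\sharp
\lesssim |\rH_o^\alpha|_{\sharp\sharp} + \e|\ln\e|^{1/2}\|\rH\|_{**}
\lesssim \frac{\e}{\sqrt{|\ln\e|}},
\qquad
\|\psi_{\e o}^\beta\|_* + |\psi_{\e o}^\beta|_\sharp
\lesssim \|\rH_o^\beta\|_{**} \lesssim \e\sqrt{|\ln\e|}.
\end{equation*}

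The main obstacle I anticipate is the careful verification that $\mathcal N(\psi)$ really does satisfy the quadratic bound in the $\|\cdot\|_{**}$ norm uniformly near the vortices, where the cutoff $\tilde\eta$ transitions between the additive and multiplicative ansatz in \eqref{eq1} and the weights in the definition of $\|\cdot\|_{**}$ behave differently in the Re/Im components; one must track the symmetry \eqref{s4} through the nonlinear map and verify that the Im-part of $\mathcal N$ indeed produces the decay $\ell_j^{-2+\sigma}$ required by \eqref{def:norm_**}. Once this bookkeeping is done, the contraction argument and the application of Proposition \ref{prop:sharp2b} are routine.
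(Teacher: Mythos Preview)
Your overall fixed-point setup via $\mathbb G_\e=T_\e(\rR+\mathcal N(\cdot))$ and the basic bound $\|\psi_\e\|_*\le D/|\ln\e|$ are fine and match the paper's approach. The gap is in the second half, where you try to obtain the refined decomposition \emph{a posteriori}. Your key claim that $\|(\mathcal N(\psi_\e))_o\|_{**}=O(|\ln\e|^{-2})$ is ``$o(\e\sqrt{|\ln\e|})$ and harmless for the $\beta$ bound'' is false: as $\e\to 0$, $|\ln\e|^{-2}$ is \emph{much larger} than $\e\sqrt{|\ln\e|}$ (indeed $\e|\ln\e|^{5/2}\to 0$). So dumping the odd part of $\mathcal N(\psi_\e)$ into $\rH_o^\beta$ destroys the bound $\|\rH_o^\beta\|_{**}\lesssim \e\sqrt{|\ln\e|}$, and putting it into $\rH_o^\alpha$ instead only gives $|\psi_{\e o}^\alpha|_\sharp\lesssim |\ln\e|^{-2}$, again far too big compared with the target $\e/\sqrt{|\ln\e|}$.

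The missing idea is that the odd part of $\mathcal N(\psi)$ is \emph{not} merely $O(\|\psi\|_*^2)$; it is much smaller, but only if you already control the odd/even splitting of $\psi$ itself. Writing $\psi=\psi_o+\psi_e$ and using that products of even modes are even, the odd part of a quadratic expression like $(\partial\psi)^2$ contains only cross terms $\psi_o\cdot\psi_e$ (and $\psi_o\cdot\psi_o$, which is even and thus drops out of $(\cdot)_o$ up to lower order). This gives schematically
\[
|(\mathcal N(\psi))_o|_{\sharp\sharp}\ \lesssim\ \|\psi_o^\beta\|_*\,\|\psi\|_*\ +\ |\ln\e|\,\|\psi\|_*\,|\psi_o^\alpha|_\sharp\ +\ |\psi_o|_\sharp^2,
\]
which is $O(\e/\sqrt{|\ln\e|})$ \emph{provided} $|\psi_o^\alpha|_\sharp\lesssim \e/\sqrt{|\ln\e|}$ and $\|\psi_o^\beta\|_*\lesssim \e\sqrt{|\ln\e|}$ are already known. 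For this reason the paper does not run the contraction in the bare ball $\{\|\psi\|_*\le D/|\ln\e|\}$ but in the smaller set $\tilde{\mathcal A}$ that \emph{includes} the constraints $|\psi_o^\alpha|_\sharp\le D\e/\sqrt{|\ln\e|}$ and $\|\psi_o^\beta\|_*\le D\e\sqrt{|\ln\e|}$; one then checks, via Proposition~\ref{prop:sharp2b} applied inside the iteration, that $\mathbb G_\e$ maps $\tilde{\mathcal A}$ to itself. You should restructure the argument accordingly: build the decomposition bounds into the fixed-point domain and propagate them through the iteration using the parity structure of $\mathcal N$.
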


\begin{proof}
We consider the following closed, bounded  domain for functions with the form $\psi_\e = \psi^{\alpha}_{\e o}+\psi^{\beta}_{\e o} + \psi_{\e e}$:
\begin{align*}
 \mathcal{\tilde A}:=&\Bigg\{ \psi:\; \psi \text{ satisfies } \eqref{s4},
\quad
{\rm{Re}} \int_{B(0,4)} \chi  \big[ \overline{\phi_j^+} w_{x_1}^++ \overline{\phi_j^-} w_{x_1}^- \big]=0,    \;\;j=1,2, \;
 \\[2mm]
& \qquad   \;\|\psi\|_*\leq\frac{C}{|\ln \e|},
\quad
| \psi^{\alpha}_{\e o}|_\sharp   \le \frac{D \e} {\sqrt{|\ln \e|}},
\quad
| \psi^{\alpha}_{\e o}|_\sharp +\|\psi^{\beta}_{\e o}\|_*   \le  D \e \sqrt{|\ln \e|} \Bigg\}.
\end{align*}
From  Proposition \ref{prop:linearfull},
     the problem \eqref{eq:linear5.1} is equivalent to finding a fixed point of
\begin{equation}\nonumber
\psi=T_\e\Big(\rR+\mathcal N(\psi) \Big)=: \mathbb{G}_\e(\psi)
\end{equation}
 in the domain $\mathcal {\tilde A}$.
Now what we shall show is  that, for $\e$ small enough,   the map $\mathbb{G}_\e$ is a contraction from $\mathcal {\tilde A}$ to itself.
 We divide  this proof into   several steps.

\medskip
\noindent{\bf Step 1:}  We  claim  that \[ \|\mathbb{G}_\e(\psi)\|_* \le  \frac{D}{|\ln \e|}, \quad \text{for any}~\psi \in \mathcal {\tilde A}.    \]
The readers  can refer to \cite{DDMR} for the proof of this step.

\medskip
\noindent{\bf Step 2:}
We claim  that, for any $\psi \in \mathcal {\tilde A}$,  we have the decompositions  and estimates
\begin{align} \label{mathbbG6.1}
\mathbb{G}_\e(\psi) =  \mathbb{G}_{\e o}(\psi) +  \mathbb{G}_{\e e}(\psi) =  \mathbb{G}_{\e o}^\alpha(\psi) + \mathbb{G}_{\e o}^\beta(\psi) +  \mathbb{G}_{\e e}(\psi),
\end{align}
\begin{align} \label{mathbbG6.2}
|  \mathbb{G}_{\e o}^\alpha(\psi)|_{\sharp \sharp} \le  \frac{D \e} {\sqrt{|\ln \e|}}, \quad \| \mathbb{G}_{\e o}^\beta(\psi)\|_* \le D \e \sqrt{|\ln \e|}.
\end{align}
The proofs of \eqref{mathbbG6.1}-\eqref{mathbbG6.2} rely on the following two facts.

\smallskip
\medskip
\noindent$\clubsuit$
From   Proposition \ref{errorProp2}, we have
\begin{align*}
\rR=\rR_o+\rR_e =  \rR_o^\alpha + \rR_o^\beta +  \rR_e,
\end{align*}
with
\begin{align*}
|\rR_o^\alpha|_{\sharp \sharp} \le  \frac{D \e} {\sqrt{|\ln \e|}}, \quad \|  \rR_o^\beta \|_* \le D \e \sqrt{|\ln \e|}.
\end{align*}

\smallskip
\medskip
\noindent$\clubsuit$
 Next, we claim  that  we can decompose $ \mathcal N(\psi)   $   into the following three parts
 \begin{align}
 \mathcal N(\psi)    =  \big(\mathcal N(\psi)\big)_o^\alpha + \big( \mathcal N(\psi) \big)_o^\beta  +   \big(\mathcal N(\psi)\big)_e,
 \label{decompN}
 \end{align}
 with the estimates
 \begin{align}
 | \big(\mathcal N(\psi)\big)_o^\alpha|_{\sharp \sharp} \le  \frac{D \e} {\sqrt{|\ln \e|}},
\qquad
\|  \big( \mathcal N(\psi) \big)_o^\beta \|_{**} \le D \e \sqrt{|\ln \e|},\quad \forall\, \psi\in \mathcal {\tilde A}.
\label{decompNEstim}
 \end{align}
These can be verified in the following way. Recall that
 \[  \mathcal N (\psi) =  \mathcal N_1 (\psi)  + i\mathcal N_2(\psi),  \quad \text{with}\ \mathcal N_j (\psi)  = \big(\mathcal N_j^+(\psi), \mathcal N_j^- (\psi)  \big)\quad j=1,2. \]
  Without loss of generality, express  $ \mathcal N^{\pm}_1,  \mathcal N^{\pm}_2$  in the coordinates $(\ell_1, \theta_1) = (\ell, \theta)$
when  $|z- \tilde d| >3$ and  ${\rm{Re} (z) } >0$
 \begin{align}  \label{expressionNpm1}
 \mathcal N^{\pm}_1 &=  \,2(\p_\ell \psi^\pm_1)(\p_\ell \psi^\pm_2)+2(\p_\theta \psi^\pm_1)(\p_\theta \psi^\pm_2) \Big(\e^2+\frac{1}{\ell^2}  \Big)
 \nonumber
  \\[2mm]
  & \quad
  +\e^2\tilde d  \Big( \sin \theta\, \big[ \p_\ell \psi^\pm_1 \p_\theta \psi^\pm_2+\p_\theta \psi^\pm_1 \p_\ell \psi^\pm_2 \big]+\frac{2\cos \theta}{\ell}\p_\theta \psi^\pm_1 \p_\theta \psi^\pm_2 \Big)
  \nonumber
  \\[2mm]
  & \quad  +
  \e^2\tilde d^2\left( 2\sin^2\theta \p_\ell \psi^\pm_1 \p_\ell\psi^\pm_2
  +\frac{4\sin \theta \cos \theta}{\ell}\big[\p_\ell \psi^\pm_1\p_\theta \psi_2+\p_\theta\psi^\pm_1\p_\ell \psi^\pm_2 \big]\right.
  +
    \frac{2\cos^2\theta}{\ell^2}
    \p_\theta\psi^\pm_1 \p_\theta \psi^\pm_2 \Big),
    \end{align}
and
\begin{align}  \label{expressionNpm2}
 \mathcal N^{\pm}_2&= -(\p_\ell \psi^\pm_1)^2+(\p_\ell \psi^\pm_2)^2-\big(\e^2+\frac{1}{\ell^2} \big) \Big((\p_\theta \psi^\pm_2)^2-(\p_\theta \psi^\pm_1)^2\Big)    \nonumber
  \\[2mm]
  & \quad   + \e^2\tilde d\left(\sin \theta \big(\p_\ell \psi^\pm_1\p_\theta \psi^\pm_1+\p_\ell\psi^\pm_2\p_\theta \psi^\pm_2 \big)
   +\frac{\cos \theta}{\ell}\big[ (\p_\theta \psi^\pm_2)^2-(\p_\theta \psi^\pm_1)^2 \big] \right)  \nonumber
  \\[2mm]
  & \quad +\e^2\tilde d^2 \left(\sin^2\theta \big((\p_\ell \psi^\pm_2)^2-(\p_\ell \psi^\pm_1)^2\big)
   +\frac{4\cos \theta \sin \theta}{\ell}\big(\p_\ell \psi^\pm_1\p_\theta \psi^\pm_1+\p_\ell \psi^\pm_2\p_\theta \psi^\pm_2\big)\right.
  \nonumber
  \\[2mm]
  & \qquad\qquad
  \left.+\frac{\cos^2\theta}{\ell^2}\big[ (\p_\theta \psi^\pm_2)^2-(\p_\theta \psi^\pm_1)^2 \big] \right)
    \nonumber\\[2mm]
  & \quad+  A_\pm|v_d^\pm  |^2 \big(e^{-2\psi^\pm_2} - 1+ 2\psi^\pm_2\big)  +  B|v_d^\mp  |^2 \big(e^{-2\psi^\mp_2} - 1+ 2\psi^\mp_2\big).
\end{align}
We will use the approach  introduced in \cite{DDMR}.
Suppose that the real  scalar functions $f, g$ satisfy the following decompositions
\[
f = f_o + f_e = f_o^\alpha + f_o^\beta + f_e,  \qquad  g= g_o + g_e = g_o^\alpha + g_o^\beta + g_e,
\]
with estimates
\[
| f_o^\alpha |_{\sharp, 1}   + | g_o^\alpha |_{\sharp, 1}   \le  \frac{D \e} {\sqrt{|\ln \e|}},
\qquad
\|  f_o^\beta  \|_{*, 1}  +\|  g_o^\beta  \|_{*, 1}  \le D \e \sqrt{|\ln \e|},
\qquad
\|  f_e  \|_{*, 1}  + \|  g_e  \|_{*, 1}  \le  \frac D {|\ln \e|},
\]
where $ | \cdot |_{\sharp, 1} $ is the semi-norm defined in \eqref{sharp_1}-\eqref{normSharp2} for real scalar functions  and $\| \cdot\|_{*, 1}$  is the norm defined in \eqref{normstar1} for real scalar functions.
Now we express the product of two real scalar  functions $f, g$ as the following
  \begin{align}
  fg  & =( f_o^\alpha + f_o^\beta + f_e)(g_o^\alpha + g_o^\beta + g_e)
  \nonumber
  \\[2mm]
  &   =  f_e g_e +  f_o g_o +  f_o^\alpha g_e + f_o^\beta g_e + f_e g_o^\alpha + f_e g_o^\beta
\nonumber\\[2mm]
& =: (fg)_e + (fg)^\alpha_{o}+(fg)^\beta_o ,
\label{productdecom}
  \end{align}
where
  \begin{align}  \label{expressionoffg}
    (fg)_e = f_e g_e, \qquad (fg)_o^\alpha = f_o g_o +  f_o^\alpha g_e +  f_e g_o^\alpha, \qquad   (fg)_o^\beta = f_o^\beta g_e+  f_e g_o^\beta.
  \end{align}
Using the expressions  of  $ \mathcal N^{\pm}_1,  \mathcal N^{\pm}_2$ in \eqref{expressionNpm1}-\eqref{expressionNpm2} and the decompositions in \eqref{productdecom}-\eqref{expressionoffg},
when $|z-\tilde d| >3$, we have
  \begin{align*}
   \mathcal N^{\pm}_j =    \big(   \mathcal N^{\pm}_j \big)_e +    \big(   \mathcal N^{\pm}_j\big)_o^\alpha +   \big(   \mathcal N^{\pm}_j \big)_o^\beta, \quad  j =1,2,
  \end{align*}
   and
   \begin{align*}
   \frac{\big(   \mathcal N^{\pm}_1\big)_o^\alpha} {\ell_1^{-1}}  \le C \Big(  | \ln \e|\,  \| \psi\|_* \,  | \psi_o^\alpha |_\sharp+| \psi_o|^2_\sharp +\frac{D\e}{\sqrt{|\ln \e|}}   \| \psi \|_*^2\Big),
   \end{align*}
   \begin{align*}
  \frac{ \big(   \mathcal N^{\pm}_1\big)_o^\beta} {\ell_1^{-1}}  \le C  \Big(\| \psi_o^\beta\|_* \| \psi\|_* +| \psi_o|^2_\sharp +\frac{D\e}{\sqrt{|\ln \e|}}   \| \psi \|_*^2\Big),
   \end{align*}
    \begin{align*}
   \frac{\big(   \mathcal N^{\pm}_2\big)_o^\alpha} {\ell_1^{-1+\sigma}}  \le C \Big(  | \ln \e|\,  \| \psi\|_* \,  | \psi_o^\alpha |_\sharp+| \psi_o|^2_\sharp +\frac{D\e}{\sqrt{|\ln \e|}}   \| \psi \|_*^2\Big),
   \end{align*}
   \begin{align*}
  \frac{ \big(   \mathcal N^{\pm}_2\big)_o^\beta} {\ell_1^{-1+\sigma}}  \le C  \Big(\| \psi_o^\beta\|_* \| \psi\|_*  +| \psi_o|^2_\sharp +\frac{D\e}{\sqrt{|\ln \e|}}   \| \psi \|_*^2\Big).
   \end{align*}
On the  other hand, we proceed in a similar way to estimate the nonlinear term $ \mathcal N (\psi)$    when   $|z-\tilde d| <2$ and get
\begin{align*}
  \big| i v_d^\pm   \big(   \mathcal N^{\pm}_i\big)_o^\alpha \big| \le  C \Big(  | \ln \e|\,  \| \psi\|_* \,  | \psi_o^\alpha |_\sharp+| \psi_o|^2_\sharp +\frac{D\e}{\sqrt{|\ln \e|}}   \| \psi \|_*^2\Big),
  \end{align*}
\begin{align*}
  \big| i v_d^\pm   \big(   \mathcal N^{\pm}_i\big)_o^\beta \big| \le  C  \Big(\| \psi_o^\beta\|_* \| \psi\|_*   +| \psi_o|^2_\sharp +\frac{D\e}{\sqrt{|\ln \e|}}   \| \psi \|_*^2\Big).
  \end{align*}
Combining  the analysis  and estimates above, we conclude that
  \begin{align*}
\big| \mathcal{N}(\psi)_{o}^\kappa\big|_{\sharp\sharp} &\leq C  \Big(\| \psi_o^\beta\|_* \| \psi\|_* +  | \ln \e|\,  \| \psi\|_* \,  | \psi_o^\alpha |_\sharp+| \psi_o|^2_\sharp +\frac{D\e}{\sqrt{|\ln \e|}}   \| \psi \|_*^2\Big)
\nonumber
  \\[2mm]
  &  \le \frac{D\e}{\sqrt{|\ln \e|}}
 \quad \text{for}~ \kappa=\alpha, \beta.
\end{align*}
Thus, the verifications of \eqref{decompN}-\eqref{decompNEstim} are completed.

\medskip
Taking
 \[ \rH_o^\kappa = \rR_o^\kappa+  \big(\mathcal{N}(\psi)\big)_{o}^k, \quad \text{for}~\kappa =\alpha, \beta.  \]
 in Proposition \ref{prop:sharp2b}, we then conclude that \eqref{mathbbG6.1} and \eqref{mathbbG6.2} hold.
%
%

   \medskip
\noindent{\bf Step 3:}   At last, we will show that the map $\mathbb{G}_\e$  is a contraction when $\e$ small enough.
   This step is standard and we omit it here.
%
  Readers can refer to \cite{DDMR} for more details.

   \end{proof}

 \medskip
 \section{Solve the reduced problem}    \label{section77}
  Note that the parameter ${\hat d} $ will determine the locations  of vortices.
  As a standard step in the reduction method to make the  Lagrange multiplier $c$ vanish in \eqref{eq:linear5.1},
  we will set up  an equation involving $\hat d$ in such a way that $c=0$.

\medskip
 Without loss of generality,
 we consider the following equivalent problem of \eqref{eq:linear5.1}
  \begin{align} \label{equation8.1}
 i w^\pm(z) \Big[  \LLL^\pm_\e(\psi) +\rR^\pm+\mathcal N^\pm(\psi) \Big](z+{\tilde d}_j)  =   c\chi w^\pm_{x_1}, \quad \text{for}~z\in   B_{\mathbb{R}_\e}(0).
\end{align}
Integrating  the equations  \eqref{equation8.1}  against $w^\pm_{x_1}$ respectively, we get
 \begin{align*}
 &\sum_{\kappa \in\{+,-\}}  {\rm Re}\int_{B_{\mathbb{R}_\e}(0)}   \Bigg( i\overline {w^\kappa_{x_1}} (z) w^\kappa (z)\Big[  \LLL^\kappa_\e(\psi) +\rR^\kappa+\mathcal N^\kappa(\psi) \Big] (z+{\tilde d})   \Bigg)   \nonumber
\\[3mm]
&
 \qquad\qquad=   c  \int_{B_{\mathbb{R}_\e}(0)} \chi \big( |w^+_{x_1}|^2+ |w^-_{x_1}|^2\big)
 =   cc^*.
\end{align*}
 Since $c^* \sim C$,  we know easily that  $c=0$ is equivalent to the following relation
  \begin{align}  \label{balancecondition}
 &\sum_{\kappa \in\{+,-\}}  {\rm Re}\int_{B_{\mathbb{R}_\e}(0)}   \Bigg( i\overline {w^\kappa_{x_1}} (z) w^\kappa (z)\Big[  \LLL^\kappa_\e(\psi) +\rR^\kappa+\mathcal N^\kappa(\psi) \Big] (z+{\tilde d})   \Bigg)   =0.
 \end{align}
The computations of all terms in \eqref{balancecondition} will be carried out in the sequel.

\medskip
 We recall the following results introduced  in  Lemma \ref{Lepsilon}
 \begin{align*}
 &  i w^\pm(z)  \LLL^\pm_\e(\psi) (z+{\tilde d})
:= {L_{0,\pm}}  (\Phi_1)   + \mathscr F_{1,\pm}(z)
    \end{align*}
where     ${L_0}  = (L_{0,+}, L_{0,-})$  is the linearization of Ginzburg-Landau equation \eqref{2dgl} around $w$, and
\begin{align}  \label{mathscrF1}
 \mathscr F_{1,\pm}(z)    =& 2A_\pm \big(1-|\hat\Omega^\pm_1|^2 \big) {\rm{Re}}\Big(w^\pm(z)\overline{ \hat\Phi^\pm_1}\Big)  w^\pm(z)
 \ +\
 2B\big(1-|\hat\Omega^\mp_1|^2 \big) {\rm{Re}}\Big(w^\mp(z) \overline{\hat \Phi^\mp_1}\Big)   w^\pm(z)
     \nonumber
\\[2mm]
& +\e^2 \Big[  \p^2_{ss}  \hat\Phi^\pm_j  -4i   \p_s \hat \Phi^\pm_1  -4 \hat\Phi^\pm_1  \Big]
   \ +\
   \frac1{\Omega_1^\pm} \Big[ 2 \nabla \Phi^\pm_1 \nabla  \hat\Omega^\pm_1 +2 \e^2 \p_s\hat\Omega^\pm_1\p_s \hat\Phi^\pm_1 \Big]
      \nonumber
\\[2mm]
&+   \Bigg\{  4 \e^2 -  \frac{2\nabla  w^\pm(z)\nabla \hat\Omega^\pm_1   + 2\e^2 \p_s w^\pm(z) \p_s\hat\Omega^\pm_1 } {v_d^\pm}
- \e^2\frac{\Big[ \p_{ss} w^\pm(z) - 4i \p_sw^\pm(z)\Big]  } {w^\pm(z)} \Bigg\}  \hat \Phi_1^\pm
\nonumber
\\[2mm]
& +  \eta_1\frac{\mathbb{E}^\pm} {v_d^\pm}\hat\Phi_1^\pm,
 \end{align}
 and
 \begin{align*}
  \Phi_1^\pm (z) = iw^\pm(z) \psi^\pm (z+\tilde d),  \quad   \hat  \Omega_1^\pm   = \Omega_1^\pm (z+2\tilde d) = w^\pm(z+2\tilde d), \quad \text{for}~ z\in B_{\mathbb{R}_\e}(0).
 \end{align*}
 We first consider
\begin{align*}
    &\sum_{\kappa \in\{+,-\}} {\rm Re}\int_{B_{\mathbb{R}_\e}(0)}    i \overline {w^\kappa_{x_1}} (z) w^\kappa(z)  \LLL^\kappa_\e(\psi) (z+\tilde d)
     \nonumber
\\[2mm]
  &\qquad\qquad = \sum_{\kappa \in\{+,-\}} {\rm Re}\int_{B_{\mathbb{R}_\e}(0)}      \overline {w^\kappa_{x_1}} (z)   \Big[ {L_{0, \kappa} }  (\Phi_1)   + iw^\kappa(z) \mathscr F_{1,\kappa}(z)
\Big]
     \nonumber
\\[2mm]
& \qquad\qquad= \sum_{\kappa \in\{+,-\}} {\rm Re}\int_{B_{\mathbb{R}_\e}(0)}    \overline {\Phi^\kappa_1} (z)    L_{0, \kappa}    (w_{x_1})
\ +\
\sum_{\kappa \in\{+,-\}} {\rm Re}\int_{\p{B_{\mathbb{R}_\e}(0)}}  \Bigg[\frac{\p \Phi^\kappa_1} {\p\nu}   \overline{w^\kappa_{x_1}} -  \overline{\Phi^\kappa_1} \frac{\p{w^\kappa_{x_1}}} {\p\nu} \Bigg]
   \nonumber
\\[2mm]
  &  \qquad\qquad\qquad + \sum_{\kappa \in\{+,-\}}  {\rm Re}\int_{B_{\mathbb{R}_\e}(0)}  i   \overline {w^\kappa_{x_1}} (z)  w^\kappa(z) \mathscr F_{1,\kappa}(z)
     \nonumber
\\[2mm]
&\qquad\qquad: = \mathscr I_1+ \mathscr I_2 + \mathscr I_3.
   \end{align*}
   We easily get $\mathscr I_1=0$ since $L_0(w_{x_1}) =0$.
For $\mathscr I_2$,  using the decay  properties of $W^\pm$ in Lemma \ref{lemmaofW} and the definition of $\|\cdot\|_{*}$, we  get
\begin{align*}
\mathscr I_2  \le C \e |\ln\e|^{\frac12}  \| \psi\|_* =  O\Big(\frac\e{|\ln\e|^{\frac12}}  \Big).
\end{align*}
Using the expression  of $ \mathscr F_{1,\pm}(z)$ in \eqref{mathscrF1},  by some tedious but straightforward   calculations, we  can get
\begin{align*}
\mathscr I_3 =  O\Big(\frac\e{|\ln\e|^{\frac12}}  \Big).
\end{align*}
Therefore, we have
\begin{align}
    \sum_{\kappa \in\{+,-\}} {\rm Re}\int_{B_{\mathbb{R}_\e}(0)}    i \overline {w^\kappa_{x_1}} (z) w^\kappa(z)  \LLL^\kappa_\e(\psi) (z+\tilde d)
    = O\Big(\frac\e{|\ln\e|^{\frac12}}  \Big).
\label{reduced1}
\end{align}

 Next, we consider the third term of \eqref{balancecondition}.  We decompose locally
 \begin{align*}
  \mathcal N^\pm(\psi)  & = \mathcal N_e^{\pm}(\psi)+\mathcal N_o^{\pm}(\psi)
  \nonumber
\\[2mm]
  &  =    \big(\mathcal N^\pm_1\big)_o  + i  \big(\mathcal N^\pm_2\big)_o  + \big(\mathcal N^\pm_1\big)_e  + i  \big(\mathcal N^\pm_2\big)_e
  \end{align*}
where $\big(\mathcal N^\pm_j\big)_o$ and  $\big(\mathcal N^\pm_j\big)_e$ for $j=1,2$  are  defined in Section \ref{section6}.
Then
 \begin{align*}
 &\sum_{\kappa \in\{+,-\}}  {\rm Re}\int_{B_{\mathbb{R}_\e}(0)}   \Bigg( i\overline {w^\kappa_{x_1}} (z) w^\kappa (z) \mathcal N^\kappa(\psi)   (z+{\tilde d})   \Bigg)
   \nonumber
\\[2mm]
  &  = \sum_{\kappa \in\{+,-\}}  {\rm Re}\int_{B_{\mathbb{R}_\e}(0)}   \Bigg( i\overline {w^\kappa_{x_1}} (z) w^\kappa (z) \mathcal N_o^{\kappa}(\psi)   (z+{\tilde d})   \Bigg)
   \nonumber
\\[2mm]
  &  = \sum_{\kappa \in\{+,-\}}  {\rm Re}\int_{B_{\mathbb{R}_\e}(0)}   i W^\kappa  \Bigg({W^\kappa}'\cos \theta+ i\frac{\sin\theta} r W^\kappa  \Bigg)      \mathcal N_o^{\kappa}(\psi)   (z+{\tilde d})
   \nonumber
\\[2mm]
  &  = \sum_{\kappa \in\{+,-\}}  \int_{B_{\mathbb{R}_\e}(0)}    W^\kappa \Bigg( -\frac{\sin\theta} r W^\kappa   \big( \mathcal N_1^{\kappa}(\psi) \big)_o  (z+{\tilde d}) - {W^\kappa}'\cos \theta \big(\mathcal N_2^{\kappa}(\psi) \big)_o  (z+{\tilde d})\Bigg)
 \end{align*}
 where  $z=re^{i\theta}$ and the second equality hold due to orthogonality.
For the estimates of the terms $\big(\mathcal N^{\pm}_1\big)_o$ and $\big(\mathcal N^{\pm}_2\big)_o$,
 from \eqref{expressionNpm1}-\eqref{expressionoffg},  we have
 \begin{align*}
|  \big(\mathcal N^{\pm}_1\big)_o   (z+{\tilde d}) |  &\le  \frac C{1+r^{2-\sigma}}    \Big( \| \psi_o^\beta\|_* \| \psi\|_* +  | \ln \e|\,  \| \psi\|_* \,  | \psi_o^\alpha |_\sharp+| \psi_o|^2_\sharp \Big)
\nonumber
\\[2mm]    & \le   \frac {C \e } { (1+r^{2-\sigma})  |\ln \e|^{\frac 12}},
 \end{align*}
 and
  \begin{align*}
| \big(\mathcal N^{\pm}_2\big)_o  (z+{\tilde d}) |  &\le   C  \Big( \| \psi_o^\beta\|_* \| \psi\|_* +  | \ln \e|\,  \| \psi\|_* \,  | \psi_o^\alpha |_\sharp+| \psi_o|^2_\sharp \Big)
  \le   \frac {C \e } {   |\ln \e|^{\frac 12}}.
 \end{align*}
 Therefore, we have
   \begin{align}
 &\sum_{\kappa \in\{+,-\}}  {\rm Re}\int_{B_{\mathbb{R}_\e}(0)}   \Bigg( i\overline {w^\kappa_{x_1}} (z) w^\kappa (z) \mathcal N^\kappa(\psi)   (z+{\tilde d})   \Bigg)   =  O\Big(\frac\e{|\ln\e|^{\frac12}}  \Big).
 \label{reduced2}
 \end{align}

\medskip
 At last,  we  consider
    \begin{align*}
 &\sum_{\kappa \in\{+,-\}}  {\rm Re}\int_{B_{\mathbb{R}_\e}(0)}   \Bigg( i\overline {w^\kappa_{x_1}} (z) w^\kappa (z)\rR^\kappa  (z+{\tilde d})   \Bigg)
    \nonumber
\\[2mm]
  &  =   \sum_{\kappa \in\{+,-\}}  \sum_{j\in\{0,1\}} {\rm Re}\int_{B_{\mathbb{R}_\e}(0)}   \Bigg( i\overline {w^\kappa_{x_1}} (z) w^\kappa (z)\rR^\kappa_j  (z+{\tilde d})   \Bigg)
      \nonumber
\\[2mm]
  &  :=  \mathscr T_0 + \mathscr T_1,
 \end{align*}
 where
\[
\mathscr T_j =\sum_{\kappa \in\{+,-\}}    {\rm Re}\int_{B_{\mathbb{R}_\e}(0)}   \Bigg( i\overline {w^\kappa_{x_1}} (z) w^\kappa (z)\rR^\kappa_j  (z+{\tilde d})   \Bigg),
\qquad
S^\pm_j(v_d) =  i  v_d^\pm \rR_j^\pm, ~\text{for}~j =0,1.
\]
For $\mathscr T_1 $,  by  Lemma  \ref{lemma5.2}, we have
\begin{align*}
 \mathscr T_1   &= \frac{\e \hat d} {|\ln \e|^{\frac12}} \sum_{\kappa \in\{+,-\}}   (t^\kappa)^{-1} \int_{B_{\mathbb{R}_\e}(0)}  |w^\kappa_{x_1}|^2
\ +\
O\Big(\frac\e{|\ln\e|^{\frac12}}  \Big)
    \nonumber
\\[2mm]
  &   = \hat d \e |\ln \e|^{\frac12}  \frac 1{|\ln \e|} \sum_{\kappa \in\{+,-\}}   (t^\kappa)^{-1}  \int_0^{2\pi} \int_0^{\frac1\e} \frac{(W^\pm)^2\sin^2\theta} {r}  \, {\mathrm d} r{\mathrm d}\theta
\ +\
O\Big(\frac\e{|\ln\e|^{\frac12}}  \Big)
      \nonumber
\\[2mm]
  &   = (t^+ + t^-)  \hat d \e |\ln \e|^{\frac12}  \pi \big( 1+o_\e(1)\big).
 \end{align*}
Here we have used the facts
$$
W^\pm = t^\pm+O\Big(\frac{1} {r^2}\Big),\quad \mbox{as } r\rightarrow +\infty.
$$
On the other hand,  for  $\mathscr T_0 $,  by the expression of $S_0^\pm(v_d)$  in \eqref{S0vdpm},
we have
\begin{align*}
  \mathscr T_0  & =2\sum_{\kappa \in\{+,-\}}   (t^\kappa)^{-1}    {\rm Re}\int_{\{\ell_1 \le\mathbb{R}_\e\}}
  \Bigg[ \frac{    \nabla  w^\kappa_a   \nabla w^\kappa_b } {w^\kappa_b } (z+{\tilde d})    \Bigg]\overline{w^\kappa_{x_1}}
\ +\
O\Big(\frac\e{|\ln\e|^{\frac12}}  \Big)
     \nonumber
\\[2mm]
  &   =   2\sum_{\kappa \in\{+,-\}}  (t^\kappa)^{-1}   {\rm Re} \int_{\{\ell_1 \le\mathbb{R}_\e\}}
  \Bigg[\frac{\p_{x_1} w^\kappa_a \p_{x_1} w^\kappa_b+ \p_{x_2} w^\kappa_a \p_{x_2} w^\kappa_b} {w^\kappa_b}  \Bigg] \overline{w^\kappa_{a, x_1}}
\ +\
O\Big(\frac\e{|\ln\e|^{\frac12}}  \Big).
  \end{align*}
  Here we have used the notation   \[  w^\pm(z-\tilde d)     = w^\pm_a (z), \quad w^\pm(z+\tilde d)     = w^\pm_b (z).   \]
  Similar to the calculations in \cite{DDMR},  by  the properties of $w^\pm$  in  Lemma \ref{lemmaofW},  we can easily get  that
  \begin{align*}
  2\sum_{\kappa \in\{+,-\}}   {\rm Re}\int_{\{\ell_1 \le\mathbb{R}_\e\} }   \frac{\p_{x_1} w^\kappa_a \p_{x_1} w^\kappa_b} {w^\kappa_b} \overline{w^\kappa_{x_1}}   = O\Big(\e^2 |\ln \e|\Big).
  \end{align*}
Finally  we consider the term
 \begin{align*}
  &2\sum_{\kappa \in\{+,-\}}   {\rm Re}\int_{\{\ell_1 \le\mathbb{R}_\e\} }   \frac{\p_{x_2} w^\kappa_a \p_{x_2} w^\kappa_b} {w^\kappa_b} \overline{w^\kappa_{x_1}}
      \nonumber
\\[2mm]
  &   = -  2\sum_{\kappa \in\{+,-\}}   \int_{\{\ell_1 \le\mathbb{R}_\e\} }   \frac{W^\pm(\ell_1)\, {W^\pm}'(\ell_1)} {\ell_2} \cos{\theta_2} \, {\mathrm d} \ell_1{\mathrm d}\theta_1 + O\Big(\e^2 |\ln \e|\Big).
 \end{align*}
Note that
\begin{align*}
\frac {\cos\theta_2} {\ell_2}  = \frac {1} {2\tilde d} \big( 1+ o_\e(1)\big),
\end{align*}
and
\begin{align*}
 2\int_{\{\ell_1 \le\mathbb{R}_\e\} }  W^\pm(\ell_1)\, {W^\pm}'(\ell_1)\, {\mathrm d} \ell_1{\mathrm d}\theta_1
 & = 2\pi\times\frac12 \Big(|W^\pm(\mathbb{R}_\e)|^2-|W^\pm(0)|^2\Big)
   \nonumber
\\[2mm]
  &   =  \pi\big({t^\pm}^2+o_\e(1)\big).
\end{align*}
The above estimates will give
\begin{align*}
\mathscr T_0
= - (t^+ + t^-) \frac {\e |\ln \e|^\frac12\pi} {\hat d} \big( 1+ o_\e(1)\big).
\end{align*}
Therefore, by collecting the estimates of $\mathscr T_0$ and $\mathscr T_1$, we obtain
\begin{align}
\sum_{\kappa \in\{+,-\}}  {\rm Re}\int_{B_{\mathbb{R}_\e}(0)}   \Bigg( i\overline {w^\kappa_{x_1}} (z) w^\kappa (z)\rR^\kappa  (z+{\tilde d})   \Bigg)
=(t^+ + t^-)\Big[\, \pi \hat d \e |\ln \e|^{\frac12} -\frac {\e |\ln \e|^\frac12\pi} {\hat d} \, \Big]\big( 1+ o_\e(1)\big).
\label{reduced3}
\end{align}

\medskip
In conclusion, the substitutions of \eqref{reduced1}-\eqref{reduced3} into \eqref{balancecondition} will imply that: $c=0$ if and only if
\begin{align}  \label{reduced for d}
  \Big[\, \pi \hat d \e |\ln \e|^{\frac12} -\frac {\e |\ln \e|^\frac12\pi} {\hat d} \, \Big]\big( 1+ o_\e(1)\big)    =0,
\end{align}
where  the functions in $ o_\e(1)$ are continuous with respect to the parameter $\hat d$ since $\psi_\e$ is continuous  on $\hat d$.
Therefore we  obtain  that there exists $\hat d = 1+ o(1) $ such that \eqref{reduced for d} holds.
By combining the arguments in Sections \ref{section2}-\ref{section77}, we complete  the proof of Theorem \ref{theorem1}.


%
%
\bigskip
\section{Appendix}
\subsection{The linear operator of Ginzburg-Landau system in $\R^2$ around the standard vortex $w$}

\medskip
The following non-degeneracy results  will be used in the proof of Lemma \ref{FirstEstimate} and Lemma \ref{prioriestimate2}. We first recall the operator $L_0 (\phi) = \big(L_0^+, L_0^-\big)(\phi) $, with
\begin{align*}
L_0^\pm  (\phi)   &=  \Delta \phi^\pm + \Big[ A_\pm \big({t^\pm}^2-{W^\pm}^2 \big)+B\big({t^\mp}^2- {W^\mp}^2\big)\Big]\phi^\pm  \nonumber
\\[2mm]
&\quad-2A_\pm {\rm{Re}}\Big(w^\pm\overline{\phi^\pm}\Big)w^\pm\,-\,2B {\rm{Re}}\Big(w^\mp \overline{\phi^\mp}\Big)w^\pm.
\end{align*}

\begin{lemma}\label{lem:ellipticestimatesL0}
 For any  $\phi = (\phi^+, \phi^-) \in L^\infty(\R^2, \mathbb{C}^2)$ satisfying the symmetry  \(\phi(\bar{z})=\overline{\phi(z)}\),
suppose  that
\begin{eqnarray} \label{estiamteforpsi1}
|\psi_1|+(1+|z|)|\nabla \psi_1| \leq  C, \qquad |\psi_2|+|\nabla \psi_2| \leq  \frac{C}{1+|z|} ,\quad \text{for}\ \ |z|>1,
\end{eqnarray}
with $\phi^\pm=iw^\pm\psi^\pm$ and $\psi^\pm=\psi_1^\pm+i\psi_2^\pm$ with $\psi_1^\pm,\psi_2^\pm\in \R$.
Then $L_0(\phi)=0$   if and only if
\begin{equation}\label{no-degeneracy}
\phi=c_1 w_{x_1}
\end{equation}
for some real constant \(c_1\).
\end{lemma}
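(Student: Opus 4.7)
\noindent\textit{Proof plan.} The ``if'' direction is immediate from the translation invariance of $L_0(\phi)=0$, so the task is to prove that every solution $\phi$ satisfying the hypotheses must be a real multiple of $w_{x_1}$. The plan is to pass from $\phi$ to $\psi$ via the substitution $\phi^\pm=iw^\pm\psi^\pm$, derive the elliptic system for $\psi$ in polar coordinates, decompose it into Fourier modes in $\theta$ using the symmetry, and then carry out a mode-by-mode rigidity analysis by weighted energy identities.

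First I would derive the equations for $\psi$. Since $w=(w^+,w^-)$ solves its own stationary equation, a direct calculation parallel to the one leading to \eqref{tilde L0pm} yields $L_0^\pm(iw^\pm\psi^\pm)=iw^\pm\mathcal M^\pm(\psi)$ with
\[
\mathcal M^\pm(\psi):=\Delta\psi^\pm+\frac{2\nabla w^\pm}{w^\pm}\cdot\nabla\psi^\pm-2iA_\pm|w^\pm|^2{\rm Im}(\psi^\pm)-2iB|w^\mp|^2{\rm Im}(\psi^\mp).
\]
Using $\nabla w^\pm/w^\pm=(W^\pm)'/W^\pm\,\hat r+(i/r)\hat\theta$, the equation $\mathcal M^\pm(\psi)=0$ splits into the real system
\begin{align*}
&\Delta\psi_1^\pm+\tfrac{2(W^\pm)'}{W^\pm}\partial_r\psi_1^\pm-\tfrac{2}{r^2}\partial_\theta\psi_2^\pm=0,\\
&\Delta\psi_2^\pm+\tfrac{2(W^\pm)'}{W^\pm}\partial_r\psi_2^\pm+\tfrac{2}{r^2}\partial_\theta\psi_1^\pm-2A_\pm(W^\pm)^2\psi_2^\pm-2B(W^\mp)^2\psi_2^\mp=0.
\end{align*}
The assumption $\phi(\bar z)=\overline{\phi(z)}$ together with $w^\pm(\bar z)=\overline{w^\pm(z)}$ gives $\psi(\bar z)=-\overline{\psi(z)}$, so $\psi_1^\pm$ is odd and $\psi_2^\pm$ is even in $\theta$. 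Consequently I would use real Fourier expansions $\psi_1^\pm=\sum_{n\geq 1}a_n^\pm(r)\sin(n\theta)$ and $\psi_2^\pm=\sum_{n\geq 0}b_n^\pm(r)\cos(n\theta)$, reducing the PDE system to an infinite family of coupled ODE systems for the quadruples $(a_n^+,a_n^-,b_n^+,b_n^-)$.

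Next I would analyze each Fourier mode by a weighted energy identity. For $n=0$, where $a_0\equiv 0$, multiplying the $b_0^\pm$-equation by $r(W^\pm)^2b_0^\pm$, summing over $\pm$, and integrating by parts (the first-order term combines cleanly with $\Delta_r$ thanks to the weight $(W^\pm)^2$) produces
\[
\sum_\pm\int_0^\infty r(W^\pm)^2\bigl|(b_0^\pm)'\bigr|^2\,dr+2\int_0^\infty r\,\mathcal Q(b_0^+,b_0^-)\,dr=0,
\]
where $\mathcal Q(x,y):=A_+(W^+)^4x^2+A_-(W^-)^4y^2+2B(W^+)^2(W^-)^2xy$ is pointwise positive definite by \eqref{H1}. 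The decay of $\psi_2^\pm$ and $\nabla\psi_2^\pm$ kills all boundary contributions at infinity, while $(W^\pm)^2\sim r^2$ handles the origin, hence $b_0^\pm\equiv 0$. For $n\geq 2$, an analogous calculation combining the $a_n$- and $b_n$-identities yields
\begin{align*}
&\sum_\pm\!\int\!r(W^\pm)^2\bigl(|(a_n^\pm)'|^2+|(b_n^\pm)'|^2\bigr)+\sum_\pm\!\int\!\tfrac{n^2(W^\pm)^2}{r}\bigl((a_n^\pm)^2+(b_n^\pm)^2\bigr)+2\!\int\!r\,\mathcal Q(b_n^+,b_n^-)\\
&\qquad=\sum_\pm\int\tfrac{4n(W^\pm)^2}{r}\,a_n^\pm b_n^\pm,
\end{align*}
and Young's inequality combined with a Hardy-type interpolation (using the asymptotic coercivity of $\mathcal Q$, which tends to $(t^\pm)^4$ at infinity) absorbs the right-hand side into the left, forcing $a_n^\pm=b_n^\pm\equiv 0$. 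The remaining mode $n=1$ is exactly the translation kernel: the admissible solution space in this mode is two-dimensional and spanned by $w_{x_1}$ and $w_{x_2}$, but $w_{x_2}$ corresponds to $\psi_1$ even and $\psi_2$ odd in $\theta$, i.e., the opposite symmetry class, so the symmetry hypothesis leaves exactly the line of real multiples of $w_{x_1}$.

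The hard part will be closing the weighted energy estimate for modes $n\geq 2$. The cross-coupling term on the right-hand side of the energy identity is borderline, and its absorption requires simultaneously exploiting the centrifugal term $n^2/r^2$ (which dominates near the origin) and the asymptotic coercivity of $\mathcal Q$ at infinity, bridged by a Hardy-type bound. Verifying that the decay hypotheses \eqref{estiamteforpsi1} are exactly what is needed to annihilate every boundary contribution in these weighted identities—most delicately the slow $1/r$ behaviour of $\psi_1^\pm$ at infinity, which is saturated by the kernel mode—and cleanly isolating the $w_{x_1}$-subspace within the $n=1$ kernel via the symmetry, constitutes the bulk of the argument.
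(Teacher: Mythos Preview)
Your strategy is genuinely different from the paper's, and as written it has a gap. The paper does \emph{not} carry out a mode-by-mode weighted energy argument; instead it first diagonalizes the $\psi_2$-coupling via the constant matrix ${\bf C}$ from \eqref{diagonal}, applies barrier (maximum-principle) arguments to the decoupled system \eqref{psi11}--\eqref{psi22} to \emph{improve} the decay to $|\psi_2|+|\nabla\psi_2|\le C/(1+r^2)$ and $|\psi_1|\le C/(1+|z|)^\alpha$, checks that this forces $\phi\in\mathscr H$, and then simply invokes the pre-established non-degeneracy Lemma~\ref{nodegeneracy-theorem1.1} (the Duan--Yang result) to land on the span of $w_{x_1},w_{x_2}$. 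The symmetry then kills the $w_{x_2}$ component.

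The problem with your plan is that the a priori bounds \eqref{estiamteforpsi1} are \emph{too weak} for your weighted energy identities to make sense. With only $|\psi_1^\pm|\le C$ and $|\nabla\psi_1^\pm|\le C/(1+|z|)$, the Dirichlet term $\int_0^\infty r(W^\pm)^2|(a_n^\pm)'|^2\,dr$ behaves like $\int_1^\infty r^{-1}\,dr$ and diverges logarithmically; the centrifugal term $\int n^2(W^\pm)^2 r^{-1}(a_n^\pm)^2\,dr$ has the same issue; and the boundary term $r(W^\pm)^2 a_n^\pm (a_n^\pm)'$ is $O(1)$ at infinity rather than $o(1)$. So the identity you write down for $n\ge 2$ is formal only. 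This is precisely why the paper inserts a decay-improvement step \emph{before} any energy argument: once $|\psi_1|\le C/(1+|z|)^\alpha$, the finite-energy condition holds and one can appeal to the Hilbert-space non-degeneracy. A second, smaller gap is your treatment of $n=1$: asserting that the admissible kernel in that mode is exactly spanned by $w_{x_1},w_{x_2}$ is itself the heart of the non-degeneracy statement and cannot be taken for granted without either reproving the Duan--Yang result or citing it, which is what the paper does.
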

\begin{proof}

 Taking $\phi = (iw^+\psi^+,  iw^-\psi^-) $ into  $ L_0 (\phi) =0$, we can get
\begin{align*}
\Delta \psi^\pm+2\frac{\nabla w^\pm}{w^\pm} \nabla \psi^\pm-2i A_\pm|w^\pm|^2 \psi^\pm_2 -2i B |w^\mp|^2\psi^\mp_2 =0, \quad ~\text{ in } B(0,1)^c.
\end{align*}
This reads in the coordinates $(r, \theta)$
\begin{align}
0 & = \Delta \psi_1^\pm  + \frac{2{W^\pm}'}{W^\pm}\p_r \psi_1^\pm -\frac2{r^2} \p_\theta\psi_2^\pm \quad \text{ in } B(0,1)^c,  \label{psi1}
\\
0 & = \Delta \psi_2^\pm+\frac{2{W^\pm}'}{W^\pm}\p_r \psi_2^\pm  +\frac2{r^2} \p_\theta\psi_1^\pm-2A_\pm (W^\pm)^2 \psi^\pm_2 -2 B (W^\mp)^2 \psi^\mp_2
\quad \text{ in } B(0,1)^c. \label{psi2}
\end{align}
  Rewrite \eqref{psi2} as
  \begin{align*}
  0 & = \Delta \psi_2^\pm+\frac{2{W^\pm}'}{W^\pm}\p_r \psi_2^\pm  +\frac2{r^2} \p_\theta\psi_1^\pm-2A_\pm {t^\pm}^2 \psi^\pm_2 -2 B {t^\mp}^2\psi^\mp_2     \nonumber
\\[2mm]
& \quad  -2A_\pm\big(  (W^\pm)^2-{t^\pm}^2\big)  \psi^\pm_2 -2 B\big( (W^\mp)^2-{t^\mp}^2\big) \psi^\mp_2
\quad   \text{ in } B(0,1)^c.
  \end{align*}
Recall \eqref{diagonal}
\[
{\bf C}^T {\bf C} = {\bf I}, \quad  2{\bf C} \, \mathbb{M}\,  {\bf C}^T =  \text{diag}(     \lambda^+,      \lambda^-) \quad \text{with}~\lambda^\pm>0,
\]
    and  define new  vector functions
    \begin{equation*}
\tilde \psi_j=(\tilde\psi_j^+, \tilde \psi_j^-)^T: ={\bf C}^T      \, ( \psi_j^+,  \psi_j^-)^T,   \quad \text{for}~j=1,2,
  \end{equation*}
  \begin{align*}
  \mathscr W_1  = (  \mathscr W_1^+,   \mathscr W_1^-)  =    {\bf C}^T  \Big(\frac{2{W^+}'}{W^+}\p_r \psi_1^+,  \frac{2{W^-}'}{W^-}\p_r \psi_1^-\Big),
  \end{align*}
  \begin{align*}
  \mathscr W_2  = (  \mathscr W_2^+,   \mathscr W_2^-)  =    {\bf C}^T  \Big(\frac{2{W^+}'}{W^+}\p_r \psi_2^+,  \frac{2{W^-}'}{W^-}\p_r \psi_2^-\Big),
  \end{align*}
  \begin{align*}
  \mathscr A= (  \mathscr A^+,  \mathscr A^-) : = -2{\bf C}^T   \Big(A_+\big(  (W^+)^2-{t^+}^2\big)  \psi^+_2,  A_-\big(  (W^-)^2-{t^-}^2\big)  \psi^-_2\Big),
  \end{align*}
and
   \begin{align*}
  \mathscr B= (  \mathscr B^+,  \mathscr B^-) : = -2{\bf C}^T   \Big( B\big((W^-)^2-{t^-}^2\big)  \psi^-_2,  B\big(  (W^+)^2-{t^+}^2\big)  \psi^+_2\Big).
  \end{align*}
Then the system \eqref{psi1}-\eqref{psi2} can be transformed to
     \begin{align}
0 & = \Delta \tilde \psi_1  +    \mathscr W_1 +\frac2{r^2} \p_\theta \tilde \psi_2 \quad \text{ in } B(0,1)^c,  \label{psi11}
\\
0 & = \Delta \tilde \psi_2 +\frac2{r^2} \p_\theta\tilde \psi_1  -\text{diag}(     \lambda^+,      \lambda^-)  \tilde \psi_2+   \mathscr W_2+ \mathscr A+ \mathscr B   \quad \text{ in } B(0,1)^c. \label{psi22}
\end{align}
 Using the estimate \eqref{estiamteforpsi1}  and the asymptotic for $W^\pm$, we have
\begin{align*}
  \Big|   -\frac2{r^2} \p_\theta\tilde \psi_1   +     \mathscr W_2+ \mathscr A+ \mathscr B \Big| \le  \frac C{1+r^2}  \quad \text{ in } B(0,1)^c.
\end{align*}
     Then by a barrier argument and elliptic estimates on  \eqref{psi22}, we can get
\begin{align}\label{eq:est_psi_2_decay}
 |\tilde \psi_2|+|\nabla \tilde \psi_2| \le \frac C{1+r^2},
\end{align}
which yields that
\begin{align*}
      \Big|  \mathscr W_1 -\frac2{r^2} \p_\theta \tilde \psi_2\Big|  \le \frac C{1+r^3}.
\end{align*}
On the other hand, considering  the equation \eqref{psi11},  together with the barrier argument and elliptic estimates, we obtain
     \begin{equation*}
|\tilde \psi_1|+(1+|z|)|\nabla \tilde \psi_1| \leq \frac{C}{(1+|z|)^\alpha},
\end{equation*}
for any $\alpha \in (0,1)$.  Since ${\bf C}$ is invertible, we can establish the estimates  for $\psi_1, \psi_2$
    \begin{equation} \label{decay-psi12}
| \psi_1|+(1+|z|)|\nabla  \psi_1| \leq \frac{C}{(1+|z|)^\alpha},\quad  | \psi_2|+|\nabla  \psi_2| \le \frac C{1+r^2}.
\end{equation}
By  \eqref{decay-psi12} and the decay of $W^\pm, {W^\pm}'$,  we conclude
\begin{align*}
   & \int_{\R^2}  (    |\nn \phi^+|^2 +  |\nn \phi^-|^2)
   \ +\
\int_{\R^2}  \Big[ A_+ \big({t^+}^2-{W^+}^2 \big)-B\big({t^-}^2- {W^-}^2\big)\Big]|\phi^+|^2
  \nonumber
\\[2mm]
& \quad
\ +\    \int_{\R^2}  \Big[ A_+ \big({t^-}^2-{W^-}^2 \big)-B\big({t^+}^2- {W^+}^2\big)\Big] |\phi^-|^2  < + \infty.
   \end{align*}
   Then Lemma \ref{nodegeneracy-theorem1.1}  implies that
   \[ \phi = c_1 w_{x_1}+ c_2 w_{x_2}, \quad \text{for some constant} ~c_1, c_2.  \]
   Using the symmetry $ \phi(\overline{z}) = \overline {\phi(z)} $, we conclude that \eqref{no-degeneracy} holds.

 \end{proof}

\begin{lemma}
\label{lem:ellipticestimatesL0-b}
For any $\phi \in L_{\text{loc}}^\infty(\R^2)$ satisfying  the symmetry  \(\phi(\bar{z})=\overline{\phi(z)}\),
suppose  that
\begin{align*}
|\psi_1|+(1+|z|)|\nabla \psi_1| \leq  C(1+|z|)^\alpha, \qquad |\psi_2|+|\nabla \psi_2| \leq  \frac{C}{1+|z|} ,
\quad\text{for}\  |z|>1,
\end{align*}
for some $\alpha<3$, where $\phi=(iw^+\psi^+, iw^-\psi^-)$ and $\psi^\pm=\psi_1^\pm+i\psi_2^\pm$ with $\psi_1^\pm,\psi_2^\pm\in \R$.
Then $L_0(\phi)=0$    if and only if
\begin{equation*}
\phi=c_1 w_{x_1}
\end{equation*}
for some real constant \(c_1\).
\end{lemma}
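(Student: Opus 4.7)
The plan is to reduce the weaker-decay setting to Lemma~\ref{lem:ellipticestimatesL0} by a Fourier-mode analysis that rules out the polynomially growing contributions permitted by the hypothesis $|\psi_1|+(1+|z|)|\nabla\psi_1|\leq C(1+|z|)^\alpha$ with $\alpha<3$. First I would translate the symmetry $\phi(\bar z)=\overline{\phi(z)}$: since $w^\pm(\bar z)=\overline{w^\pm(z)}$, it becomes $\psi^\pm(\bar z)=-\overline{\psi^\pm(z)}$, which means $\psi_1^\pm$ is odd in $\theta$ and $\psi_2^\pm$ is even in $\theta$. The decoupling step of Lemma~\ref{lem:ellipticestimatesL0} goes through verbatim: apply the orthogonal matrix $\mathbf{C}$ to diagonalize the mass term in the equation for $\tilde\psi_2$; using the assumed decay $|\psi_2|+|\nabla\psi_2|\leq C/(1+|z|)$ together with the coupling term $\tfrac{2}{r^2}\partial_\theta\tilde\psi_1=O(r^{\alpha-2})$, a barrier argument for the massive Schr\"odinger system yields the improved decay
$$
|\tilde\psi_2|+|\nabla\tilde\psi_2|\leq C(1+|z|)^{-2},
$$
and hence the same estimate for $\psi_2^\pm$ after inverting $\mathbf{C}$.

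Next I would Fourier-expand $\psi_1^\pm(r,\theta)=\sum_{k\geq 1}a_k^\pm(r)\sin(k\theta)$ using the parity, and plug this into the real-part equation $\Delta\psi_1^\pm+\tfrac{2W^{\pm\prime}}{W^\pm}\partial_r\psi_1^\pm=\tfrac{2}{r^2}\partial_\theta\psi_2^\pm$. Since $W^{\pm\prime}/W^\pm=O(r^{-3})$ and the right-hand side is $O(r^{-3})$, each Fourier coefficient satisfies, for $r\gg 1$, an ODE of the form
$$
a_k^{\pm\prime\prime}+\tfrac{1}{r}a_k^{\pm\prime}-\tfrac{k^2}{r^2}a_k^\pm=O(r^{-3}),
$$
whose homogeneous solutions are $r^{\pm k}$. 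The bound $|a_k^\pm(r)|\leq Cr^\alpha$ with $\alpha<3$ forces the coefficient of $r^k$ to vanish for every $k\geq 3$, so $a_k^\pm(r)=O(r^{-k})$ is in fact bounded in that range. Only the modes $k=1$ and $k=2$ can still a priori carry polynomial growth (of order $r$ and $r^2$, respectively).

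The main obstacle is eliminating these two residual modes, since they are not forbidden by the infinity asymptotics alone. I would do this by a testing and matching argument combined with the symmetry. The candidate $k=1$ growth $c^\pm r\sin\theta$ would have to match at the origin with a regular extension of $\phi^\pm=iw^\pm\psi^\pm$, and the only bounded-at-infinity kernel of $L_0$ with angular signature $\sin\theta$ in $\psi_1$ is (up to constants) the translation kernel $w_{x_2}$; but $w_{x_2}(\bar z)=-\overline{w_{x_2}(z)}$ violates the symmetry \(\phi(\bar z)=\overline{\phi(z)}\), so $c^\pm$ must vanish. The $k=2$ mode is eliminated by applying Green's identity to $L_0(\phi)=0$ against the decaying kernel $w_{x_1}$ on a large disk $B_R$: after Fourier decomposition the boundary integral isolates the $k=2$ growth coefficient, and the improved decay of $\psi_2^\pm$ obtained above makes the boundary term tend to zero as $R\to\infty$, forcing that coefficient to be zero as well.

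Once both residual growing modes have been ruled out, $\psi_1^\pm$ is bounded and $(1+|z|)|\nabla\psi_1^\pm|$ is bounded, so $\phi$ satisfies the hypotheses of Lemma~\ref{lem:ellipticestimatesL0}; that lemma then yields $\phi=c_1 w_{x_1}$ for some real constant $c_1$, which is the desired conclusion.
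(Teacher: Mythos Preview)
Your overall strategy---Fourier decomposition in $\theta$, mode-by-mode ODE analysis, and reduction to Lemma~\ref{lem:ellipticestimatesL0}---is exactly what the paper indicates (it defers the details to Lemma~7.3 of \cite{DDMR}). However, your specific arguments for eliminating the growing modes $k=1$ and $k=2$ do not work as written.

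For $k=1$: you claim the growing behaviour $c^\pm r\sin\theta$ in $\psi_1$ corresponds to $w_{x_2}$ and then exclude it by symmetry. But a direct computation shows that it is $w_{x_1}$, not $w_{x_2}$, whose $\psi_1$ carries the $\sin\theta$ signature (namely $\psi_1^\pm=-\tfrac{1}{r}\sin\theta$, $\psi_2^\pm=-\tfrac{(W^\pm)'}{W^\pm}\cos\theta$), and $w_{x_1}$ \emph{does} satisfy $\phi(\bar z)=\overline{\phi(z)}$. More importantly, the putative growing solution $a_1^\pm\sim r$ is not a bounded kernel element at all; it is a different branch of the mode-$1$ ODE, and no symmetry argument alone excludes it. The correct step, as in \cite{DDMR}, is an ODE/Wronskian analysis showing that among mode-$1$ solutions regular at the origin with $b_1^\pm$ not exponentially growing, only multiples of $w_{x_1}$ survive.

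For $k=2$: Green's identity against $w_{x_1}$ gives no information about the mode-$2$ coefficients. In the $\phi$-variable, $w_{x_1}^\pm$ lives in the angular modes $e^{0},e^{2i\theta}$, whereas the $\psi$-mode $k=2$ contributes to $\phi$ in the modes $e^{3i\theta},e^{-i\theta}$; these are orthogonal on $\partial B_R$, so the boundary integral is identically zero. Again one needs the direct ODE analysis of the mode-$2$ system. A related issue is your preliminary ``global'' upgrade $|\tilde\psi_2|+|\nabla\tilde\psi_2|\le C(1+|z|)^{-2}$: the forcing $\tfrac{2}{r^2}\partial_\theta\tilde\psi_1=O(r^{\alpha-2})$ may grow when $\alpha>2$, so a barrier argument on the massive equation does not yield this decay; the improvement must be done mode by mode from the start.
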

\begin{proof}
Using Lemma \ref{lem:ellipticestimatesL0}, some Fourier analysis  and ODE theory, we can prove this lemma.    The proof is similar as that in Lemma 7.3  in \cite{DDMR}.    We omit it here for conciseness.

 \end{proof}

\medskip
 \subsection{Elliptic estimates used in the linear theory}

Recall the polar coordinate notation $z = r e^{i s}$, $r>0$, $s\in\R$.
For the convenience of the reader, we   provide some useful lemmas   from \cite{DDMR}.

\begin{lemma}\label{lem:comparison_principle}
(\cite{DDMR}).
Let $u:\R \times \R^*_+ \rightarrow \R$ be a bounded function which is in $C^2(\R \times \R^*_+)\cap C^0(\overline{\R \times \R^*_+})$ and satisfy
\begin{equation*}
\left\{
\begin{array}{rcll}
\Delta u+\e^2 \p_{ss}^2 u & \geq& 0 \text{ in } \R \times \R^*_+, \\
u&\leq & 0 \text{ on } \R \times \{0 \} .
\end{array}
\right.
\end{equation*}
Then $u\leq 0$ in $\R \times \R^*_+$.
\qed
\end{lemma}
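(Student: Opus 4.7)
The plan is to establish this Phragm\'en--Lindel\"of style maximum principle by combining the classical strong maximum principle on bounded subdomains with a barrier argument at infinity. In the $(s,r)$ coordinates, the operator reads
\[
L u = \partial_{rr}u + \tfrac{1}{r}\partial_r u + \Big(\tfrac{1}{r^{2}} + \e^{2}\Big)\partial_{ss}^{2} u,
\]
which is strictly elliptic on $\R\times\R^*_+$. Arguing by contradiction, I would assume that $M:=\sup u>0$. The classical strong maximum principle applied on bounded smooth subdomains away from $\{r=0\}$ rules out the possibility that $M$ is attained at an interior point: at such a point $u$ would have to be locally constant, hence extend continuously to the boundary with the positive constant value $M$, contradicting $u|_{\{r=0\}}\leq 0$. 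Thus $M$ can only be approached in the limit $(s,r)\to\infty$ inside the half-plane.

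To exclude this remaining possibility I would pass to the logarithmic variable $\rho:=\log r$. Setting $\tilde u(\rho,s):=u(s,e^{\rho})$ and multiplying the differential inequality by $r^{2}$, the assumption $Lu\geq 0$ becomes
\[
\widetilde L \tilde u := \partial_{\rho\rho}\tilde u + \bigl(1+\e^{2}e^{2\rho}\bigr)\partial_{ss}^{2}\tilde u \geq 0
\]
on $\R_{\rho}\times\R_{s}$, with $\widetilde L$ uniformly elliptic on every half-space $\{\rho\leq \rho_{0}\}$. The boundary $\{r=0\}$ corresponds to $\rho\to -\infty$, at which $\limsup_{\rho\to -\infty}\tilde u(\rho,s)\leq 0$ uniformly in $s$ by the continuity of $u$ up to $\{r=0\}$. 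The next step is to construct a non-negative barrier $\Psi\in C^{2}(\R^{2})$ satisfying $\widetilde L\Psi\leq 0$ in $\R^{2}$, with $\Psi$ bounded as $\rho\to -\infty$ and $\Psi(\rho,s)\to +\infty$ whenever $\rho\to +\infty$ or $|s|\to +\infty$. For each $\eta>0$, $\tilde u-\eta\Psi$ is then a sub-solution of $\widetilde L$ that is bounded above by $\eta\limsup_{\rho\to-\infty}\Psi$ as $\rho\to -\infty$ and tends to $-\infty$ in every other direction, so its finite supremum is attained at an interior point. The strong maximum principle forces $\tilde u-\eta\Psi$ to be locally constant there, which yields $\tilde u-\eta\Psi \leq \eta\limsup_{\rho\to-\infty}\Psi$ everywhere; letting $\eta\to 0^{+}$ gives $\tilde u\leq 0$, hence $u\leq 0$, a contradiction with $M>0$.

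The main obstacle is the explicit construction of $\Psi$. The difficulty stems from the coefficient $1+\e^{2}e^{2\rho}$ blowing up as $\rho\to +\infty$: a separable ansatz $\Psi(\rho,s)=f(\rho)+g(s)$ with both $f,g$ globally concave fails, since a non-negative concave function on $\R$ cannot grow to $+\infty$ at both $\pm\infty$. Instead, $\Psi$ must be built piecewise, combining a slowly increasing concave function of $\rho$ (such as $\log(1+\rho)$ for $\rho$ large) supported in $\{\rho>\rho_{0}\}$ with concave functions of $s$ defined separately on $\{s>s_{0}\}$ and $\{s<-s_{0}\}$, all smoothly matched in the central region. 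The concavity in each variable guarantees $\widetilde L\Psi\leq 0$ in each piece, and the residual contributions from the gluing region can be absorbed by choosing the ``slope'' of each piece sufficiently small. This construction and the conclusion follow along the lines of the corresponding maximum principle arguments in \cite{DDMR}.
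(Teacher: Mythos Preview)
The paper does not prove this lemma: it is quoted verbatim from \cite{DDMR} and closed with a \qed. So there is no argument in the present paper to compare against; your proposal and the paper end up at the same place, namely an appeal to \cite{DDMR}.

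That said, your sketch is only an outline, and the decisive step---the construction of the global supersolution $\Psi$ for $\widetilde L=\partial_{\rho\rho}+(1+\e^2 e^{2\rho})\partial_{ss}$---is not carried out. Two remarks on this gap. First, the piecewise idea you describe is not obviously salvageable: gluing pieces that are concave in $s$ for $|s|>s_0$ to something smooth across $|s|=s_0$ forces $\partial_{ss}\Psi>0$ in the matching zone, and because the coefficient $1+\e^2 e^{2\rho}$ is unbounded in $\rho$ there is no uniformly negative $\partial_{\rho\rho}$ contribution to absorb it; your claim that ``the residual contributions from the gluing region can be absorbed by choosing the slope sufficiently small'' needs a genuine mechanism, not just a small parameter. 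Second, a few statements in the argument are imprecise: the supremum of $\tilde u-\eta\Psi$ need not be \emph{attained}; what one actually shows is that $\{\tilde u-\eta\Psi>\delta\}$ is relatively compact in $\R^2$ for every $\delta>0$ (using continuity of $u$ up to $\{r=0\}$ on compact $s$--intervals together with $\Psi\to+\infty$ in the remaining directions), and then the strong maximum principle on that compact set gives a contradiction. With these caveats, the strategy is the standard Phragm\'en--Lindel\"of one; the substance is entirely in producing $\Psi$, which you have not done.
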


\medskip
\begin{lemma}\label{lem:comparison_principle_Neumann}
 (\cite{DDMR}).
Let \(u:\R\times \R^*_+ \rightarrow \R\) be a bounded function which is in \(C^2(\R\times\R^*_+)\cap C^1(\overline{\R\times\R^*_+})\) and  \(c\geq 0 \). If \(u\) satisfies
\begin{equation*}
\left\{
\begin{array}{rcll}
\Delta u+\e^2 \p^2_{ss}u-c u & \geq & 0 \text{ in } & \R \times \R^*_+, \\
\p_\nu u &\leq & 0 \text{ on } & \R\times \{0 \},
\end{array}
\right.
\end{equation*}
then \(u\leq 0\) in \(\R\times \R^*_+\).
\qed
\end{lemma}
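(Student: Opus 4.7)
The plan is to prove the Neumann comparison principle by a translation-compactness argument combined with Hopf's boundary point lemma. The first reduction is to remove the anisotropy of the operator $\Delta + \e^{2}\partial_{ss}^{2}$: the affine change of variables $\tilde s := s/\sqrt{1+\e^{2}}$ converts it to the standard planar Laplacian in the coordinates $(\tilde s, r)$. It therefore suffices to treat bounded $u\in C^{2}(\R\times\R^{*}_{+})\cap C^{1}(\overline{\R\times\R^{*}_{+}})$ satisfying $\Delta u - c u \ge 0$ in the half-plane and $\partial_{\nu} u \le 0$ on $\{r=0\}$, and to deduce $u\le 0$.

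I would then argue by contradiction: assume $M := \sup u > 0$, and pick a sequence $(\tilde s_{n}, r_{n})$ with $u(\tilde s_{n}, r_{n})\to M$. Translating horizontally by $\tilde s_{n}$ (and, if $r_{n}\to\infty$, vertically by $r_{n}$ as well) produces functions $u_{n}$ which are uniformly bounded and satisfy the same differential inequality and the same Neumann boundary condition on whichever boundary survives. Standard interior and boundary Schauder estimates provide a subsequential $C^{2}_{\mathrm{loc}}\cap C^{1}_{\mathrm{loc}}$ limit $u_{\infty}$ defined either on the half-plane (when $r_{n}$ stays bounded) or on all of $\R^{2}$ (when $r_{n}\to\infty$), which attains its supremum $M$ at a finite point.

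The main step is the contradiction at the attained maximum. If the maximum point $p$ of $u_{\infty}$ lies in the open half-plane (or in $\R^{2}$), the second-derivative test gives $\Delta u_{\infty}(p)\le 0$ while the differential inequality forces $\Delta u_{\infty}(p)\ge c u_{\infty}(p) = cM$; positivity of $c$ and $M$ makes these incompatible. If the maximum $p_{0}=(\tilde s_{0},0)$ lies on the surviving boundary, Hopf's boundary point lemma applied to the operator $\Delta - c$ with $c\ge 0$ forces $\partial_{\nu} u_{\infty}(p_{0}) > 0$ unless $u_{\infty}$ is constant, in direct contradiction with the inherited inequality $\partial_{\nu} u_{\infty}\le 0$; constancy of $u_{\infty}$ is excluded since a positive constant does not satisfy $\Delta u - c u\ge 0$ when $c>0$.

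The main obstacle is the case $r_{n}\to\infty$, in which the boundary data is lost in the limit and one must rely solely on the interior maximum argument; this is where strict positivity of the zeroth-order coefficient becomes essential, and indicates that the lemma should be understood with $c$ bounded below by a positive constant, which is indeed the situation in every application in this paper after the diagonalization performed in \eqref{diagonal}, where the relevant coefficients are $\lambda^{\pm}>0$. A complementary point is that the Hopf step requires the tangent ball at $p_{0}$ and an interior sign, both of which are inherited from the limit procedure; a quick check using the explicit radial barrier $\varphi(r)=1-e^{-\mu r}$ on the tangent ball of $p_{0}$, with $\mu$ chosen large, provides the standard Hopf comparison function and completes the contradiction.
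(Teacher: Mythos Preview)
The paper does not actually prove this lemma: it is quoted verbatim from \cite{DDMR} and closed with a \qed, so there is no argument in the present manuscript to compare against. Your proposal must therefore be judged on its own, and it has one genuine gap and one correct observation worth sharpening.

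The gap is the compactness step. You appeal to ``standard interior and boundary Schauder estimates'' to extract a $C^{2}_{\mathrm{loc}}$ limit of the translates $u_n$, but Schauder estimates apply to solutions of elliptic \emph{equations}, not to functions satisfying a one-sided inequality $\Delta u - cu\ge 0$. From that inequality together with $u\in L^\infty$ you get no uniform bound on $D^2 u$; the translates of a single $C^2$ function need not be precompact in $C^2_{\mathrm{loc}}$, so neither the second-derivative test at an interior maximum nor the classical Hopf lemma at the boundary is available for the limit. A clean way to bypass compactness entirely, when $c>0$, is to extend $u$ to the whole plane by even reflection in $r$: the Neumann hypothesis $\partial_\nu u\le 0$ (equivalently $\partial_r u\ge 0$ on $\{r=0\}$) makes the reflected function a \emph{distributional} subsolution of $\Delta-c$ on $\R^2$; mollifying preserves the inequality pointwise, and then the explicit barrier $v_\delta(s,r)=\cosh(\delta s)\cosh(\delta r)$ with $2\delta^2<c$ forces $\sup u\le 0$ by a direct pointwise maximum-principle argument with no limiting procedure.

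Your remark about $c=0$ is not merely a technical obstacle: the statement is actually \emph{false} for $c=0$, since any positive constant satisfies both the differential inequality and the homogeneous Neumann condition. So the lemma should be read with $c>0$, and you are right that every use of it in the paper (via the diagonalized coefficients $\lambda^\pm>0$ in \eqref{diagonal}) falls in that regime.
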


\medskip
Let $R_0>0$ be fixed with $R_0<R_\e <\e ^{-1}$, and $\Omega$, $\Omega'$ be two regions, respectively:
\begin{align*}
\Omega = \{ z \in \R^2 \, | \, R_0<|z|<R_\e  \},
\qquad
\Omega' = \Bigl\{ z \in \R^2 \, | \, 2R_0<|z|<\frac{1}{2} R_\e  \Bigr\}.
\end{align*}
We adopt the following lemma presented in \cite{DDMR}.
 \begin{lemma} \label{lem:estimates-3b}
 (\cite{DDMR}).
Let $f:\R^2\rightarrow \R$ be such that $f(\bar{z})=-f(z)$ and $|f(z)| \leq \frac{1}{|z|}$. Let $u$ be a solution of
\begin{align*}
\Delta u +\e^2\p^2_{ss}u =f \text{ in } \Omega,
\end{align*}
such that  $u(\bar{z})=-u(z)$ and
\begin{align*}
|u(z)| &\leq R_0 |\ln \e| , \quad \forall\, z \mbox{ with } |z|= R_0,
\\
|u(z)| &\leq \mathbb{R}_\e, \quad \forall\, z \mbox{ with } |z|= \mathbb{R}_\e .
\end{align*}
Then there is a constant $C$ such that
\begin{equation*}
| u(z)| \leq C |z| \log\Bigl(\frac{2\mathbb{R}_\e}{|z|}\Bigl) ,
\quad
\forall\, z\in \Omega'.
\end{equation*}
\qed
\end{lemma}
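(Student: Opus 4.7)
\medskip

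The strategy is to exploit the odd symmetry under conjugation, which forces $u\equiv 0$ on the real axis, and then separate the contributions of the source and the boundary data via Fourier analysis in the angular variable $s$. Precisely, the hypothesis $u(\bar z)=-u(z)$ means $u$ is odd in $s$, so $u(r,s)=\sum_{k\ge 1}u_k(r)\sin(ks)$ contains only odd modes; the same applies to $f=\sum_{k\ge 1}f_k(r)\sin(ks)$, and taking the $\sin(ks)$-moment of the pointwise bound $|f|\le 1/r$ gives $|f_k(r)|\le C/r$ uniformly in $k$. Working on the upper half-annulus $\Omega^+:=\{R_0<|z|<\mathbb{R}_\e,\ 0<s<\pi\}$, where $u$ satisfies Dirichlet conditions on the real-axis segments and the prescribed $L^\infty$ bounds on the two arcs, I would split $u=u_p+u_h$, with $u_p$ solving $\mathscr{L}u_p=f$ with zero boundary data and $u_h$ satisfying $\mathscr{L}u_h=0$ with the original boundary values; here $\mathscr{L}:=\Delta+\e^2\partial_{ss}^2$.

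For the particular part $u_p$, the Fourier coefficient $u_{p,k}$ satisfies the Bessel-type radial ODE
$$u_{p,k}''+\frac{1}{r}u_{p,k}'-k^2\Bigl(\frac{1}{r^2}+\e^2\Bigr)u_{p,k}=f_k,\qquad u_{p,k}(R_0)=u_{p,k}(\mathbb{R}_\e)=0.$$
The $k=1$ mode is precisely the resonant one: dropping the $\e^2$ correction as a lower-order perturbation (justified since $\e\mathbb{R}_\e=O(|\ln\e|^{-1/2})$), the homogeneous solutions are $r$ and $1/r$ with Wronskian $-2/r$, and variation of parameters against the source $f_1$ of size $1/r$ produces a particular solution of size exactly $r\log(2\mathbb{R}_\e/r)$. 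For $k\ge 2$ the homogeneous solutions $r^{\pm k}$ are non-resonant with the source, and variation of parameters yields a contribution of size $r/(k^2-1)$, so that the sum over $k\ge 2$ is bounded by $Cr\le Cr\log(2\mathbb{R}_\e/r)$ on $\Omega'$.

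For the homogeneous piece $u_h$, expanding $u_h=\sum_{k\ge 1}a_k(r)\sin(ks)$ reduces the problem to fitting $a_k(r)=A_k r^k+B_k r^{-k}$ to the boundary values $|a_k(R_0)|\le CR_0|\ln\e|$ and $|a_k(\mathbb{R}_\e)|\le C\mathbb{R}_\e$. A Cramer's-rule computation, using that the ratio $R_0/\mathbb{R}_\e=O(\e|\ln\e|^{1/2})$ is small, gives $|A_k|\le C\mathbb{R}_\e^{1-k}$ and $|B_k|\le CR_0^{k+1}|\ln\e|$; hence on $\Omega'$, where both $r/\mathbb{R}_\e\le 1/2$ and $R_0/r\le 1/2$, one obtains $|a_k(r)|\le 2^{1-k}Cr+2^{-k}CR_0|\ln\e|$. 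Summing over $k\ge 1$ yields $|u_h(z)|\le Cr+CR_0|\ln\e|\le Cr\log(2\mathbb{R}_\e/r)$ on $\Omega'$ since $r\log(2\mathbb{R}_\e/r)\gtrsim R_0|\ln\e|$ uniformly there. Combining with the bound for $u_p$ gives the claimed estimate $|u(z)|\le C|z|\log(2\mathbb{R}_\e/|z|)$ on $\Omega'$.

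The main technical obstacle is the $k=1$ Fourier mode: its homogeneous solutions $r$ and $1/r$ are resonant with the $1/r$ source, and this resonance produces exactly the logarithmic factor $\log(2\mathbb{R}_\e/|z|)$ that appears in the conclusion, so it cannot be absorbed as a small perturbation -- the cancellation between the Wronskian $-2/r$ and the source must be made explicit via variation of parameters. A secondary difficulty is the uniform-in-$k$ control of the $\e^2k^2$ correction term; this is handled by noting $\e\mathbb{R}_\e=o(1)$, which lets one treat the modal ODE as a regular perturbation of the Euler equation $u_k''+r^{-1}u_k'-k^2 r^{-2}u_k=f_k$ on $r\in(R_0,\mathbb{R}_\e)$, via the variation-of-parameters representation together with a standard Gr\"onwall-type argument comparing the modified Bessel functions $I_k(k\e r),K_k(k\e r)$ to the Euler solutions $r^{\pm k}$. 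A direct radial-barrier alternative using $\mathcal{B}(z)=C|z|\log(2\mathbb{R}_\e/|z|)$ fails precisely because $\mathscr{L}\mathcal{B}>0$ while $f$ can be as negative as $-1/|z|$, so the Fourier decomposition is essential to isolate the sign-definite contributions mode by mode.
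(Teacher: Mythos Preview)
The paper does not supply its own proof of this lemma; it is quoted from \cite{DDMR} without reproducing the argument, so there is nothing to compare against at the level of method. Your Fourier-mode decomposition is correct and gives a valid self-contained proof.

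One simplification is worth noting: the ``secondary difficulty'' you flag, namely the $\e^2 k^2$ correction, is in fact harmless because it has a favorable sign, and no Gr\"onwall/Bessel comparison is needed. The modal operator
\[
L_k v \;=\; v'' + \tfrac{1}{r}v' - k^2\Bigl(\tfrac{1}{r^2}+\e^2\Bigr)v
\]
has nonpositive zeroth-order coefficient, hence obeys the maximum principle on $(R_0,\mathbb{R}_\e)$; moreover $L_k(r^{\pm k}) = -k^2\e^2 r^{\pm k}\le 0$, so the Euler solutions $r^{\pm k}$ are already supersolutions for the \emph{full} modal operator. Consequently your homogeneous-part bound $|a_k(r)|\le |\beta_k|(r/\mathbb{R}_\e)^k+|\alpha_k|(R_0/r)^k$ holds verbatim with the $\e^2$ term present. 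For the particular part, the same observation shows that the modal barriers $b_1(r)=Mr\log(2\mathbb{R}_\e/r)$ and $b_k(r)=Mr/(k^2-1)$ for $k\ge 2$ satisfy $L_1 b_1\le -2M/r$ and $L_k b_k\le -M/r$ directly, so $|u_{p,k}|\le b_k$ follows from the modal maximum principle without dropping $\e^2$ first. This makes the argument uniform in $k$ and removes the perturbation step entirely. With this adjustment your proof is complete.
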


\medskip
{\bf Acknowledgements: }
L. Duan was supported by  NSFC  (No.11771167) and The Science  and Technology Foundation of Guizhou Province ($[2001]$ZK008).
Q. Gao was supported by NSFC (No.11931012, No.11871386) and the ``Fundamental Research Funds for the Central Universities" (No. WUT2020IB019).
J. Yang  was supported by  NSFC  (No.11771167 and No. 12171109).

\end{document}